\documentclass[10pt]{amsart}

\usepackage{amsmath,amssymb,amsthm}
\usepackage{mathrsfs}
\usepackage{geometry}
\usepackage{enumitem}
\usepackage{mathtools}
\usepackage{amscd}
\usepackage{esint}
\usepackage{cases}
\usepackage{empheq}
\usepackage{stmaryrd}
\usepackage{booktabs}
\usepackage{tabularx}
\usepackage{array}

\usepackage{tikz}
\usetikzlibrary{arrows.meta,positioning}
\usepackage{tikz-cd} % Added for commutative diagrams

\usepackage[dvipsnames]{xcolor}
\usepackage[
unicode=true,
colorlinks=true,
linktoc=all,
linkcolor=MidnightBlue,
citecolor=ForestGreen,
urlcolor=BrickRed,
filecolor=BrickRed,
bookmarksnumbered=true,
bookmarksopen=true
]{hyperref}
\hypersetup{
	pdftitle={CURVATURE AT INFINITY OF SCALAR-FLAT ALE FOUR-MANIFOLDS},
	pdfauthor={Jiangcheng You},
	pdfsubject={Differential Geometry, Geometric Analysis},
	pdfkeywords={ALE manifolds, scalar-flat metrics, harmonic curvature, Bach-flat metrics, Brown--York mass, ADM mass}
}

\newcommand{\R}{\mathbb{R}}
\newcommand{\N}{\mathbb{N}}
\newcommand{\Sph}{\mathbb{S}}
\newcommand{\gE}{g_{\mathrm{E}}}

\newcommand{\del}{\partial}
\newcommand{\tr}{\operatorname{tr}}
\newcommand{\divE}{\operatorname{div}_{\gE}}
\newcommand{\Bach}{\operatorname{B}}
\newcommand{\Ric}{\operatorname{Ric}}
\newcommand{\Rm}{\operatorname{Rm}}

\newcommand{\Weyl}{\operatorname{W}}
\newcommand{\Hom}{\operatorname{Hom}}

\newcommand{\Id}{\operatorname{Id}}

\newcommand{\cyc}{\mathrm{cyc}}
\newcommand{\Scal}{\operatorname{R}}
\newcommand{\Lie}{\mathcal{L}}

\newcommand{\DeltaE}{\Delta_{\gE}}
\newcommand{\trE}{\operatorname{tr}_{\gE}}

\newcommand{\lin}{\mathrm{lin}}

\newcommand{\BE}{\mathcal{B}_{\gE}}

\newcommand{\ADM}{\mathrm{ADM}}

\newcommand{\Sch}{\mathrm{Sch}}

\newcommand{\SO}{\mathrm{SO}}

\newcommand{\divergence}{\mathrm{div}}
\newcommand{\SU}{\mathrm{SU}}

\newcommand{\Burns}{\mathrm{Burns}}

\theoremstyle{plain}
\newtheorem{theorem}{Theorem}[section]
\newtheorem{proposition}[theorem]{Proposition}
\newtheorem{lemma}[theorem]{Lemma}
\newtheorem{corollary}[theorem]{Corollary}

\newtheorem{definition}[theorem]{Definition}
\newtheorem{assumption}[theorem]{Assumption}
\newtheorem{remark}[theorem]{Remark}

\title{CURVATURE AT INFINITY OF SCALAR-FLAT ALE FOUR-MANIFOLDS}

\author{Jiangcheng You}
\address{School of Mathematical Sciences, University of Science and Technology of China, Hefei 230026, China}
\email{yjcmp@mail.ustc.edu.cn}

\makeatletter

\def\printead#1{
	\par\smallskip\noindent\textsc{\@author}\par 
	\textit{\@address}\par 
	\textit{E-mail address:} \texttt{\@email}\par 
}
\makeatother

\begin{document}

	\begin{abstract}
		We study refined asymptotics of scalar-flat ALE four-manifolds in the
		Tian--Viaclovsky setting, namely for self-dual or anti-self-dual metrics and
		for metrics with harmonic curvature.  Starting from the ALE coordinates
		obtained by Tian--Viaclovsky, we construct preferred coordinates at infinity
		and identify the homogeneous $|x|^{-2}$ term in the metric expansion.  This
		term splits canonically into a scalar part determined by the ALE ADM mass and
		an algebraic Weyl tensor at infinity. As an application, we consider scalar-flat K\"ahler ALE metrics on minimal
		resolutions $\pi:X\to\mathbb C^2/\Gamma$ of quotient surface singularities.
		In this case, the leading Weyl tensor at infinity vanishes exactly
		when the minimal resolution is crepant.
	\end{abstract}
	
	\maketitle

	\section{Introduction}\label{sec:intro}
	
	The purpose of this paper is to study the refined asymptotic geometry of
	scalar-flat ALE ends arising from the two curvature classes in the
	Tian--Viaclovsky framework \cite{TV}: self-dual or anti-self-dual metrics, and metrics
	with harmonic curvature. At a first level, an ALE end is a
	region which, after passing to a finite quotient, becomes asymptotic to the
	Euclidean end. The main question here is what can be said beyond this basic
	Euclidean approximation: whether one can choose better coordinates at infinity,
	identify the first nontrivial term in the metric expansion, and determine which
	part of this term is intrinsic to the geometry of the end.
	
	We begin by recalling the definition of an ALE end, following
	\cite[(1.3)--(1.4)]{TV}. Let $(M,g)$ be a smooth noncompact Riemannian
	$4$--manifold. An end $\Omega\subset M$ is said to be \emph{ALE of order
		$\tau>0$} if there exist $R>0$, a finite subgroup
	$\Gamma\subset \SO(4)$ acting freely on $\R^4\setminus B_R$, where
	$B_R=B(0,R)$, and a $C^\infty$ diffeomorphism
	\begin{align*}
		\Phi:\Omega \longrightarrow (\R^4\setminus B_R)/\Gamma
	\end{align*}
	such that, if $\pi_\Gamma:\Omega_R:=\R^4\setminus B_R \to (\R^4\setminus B_R)/\Gamma$ denotes the quotient map, then the pulled-back metric $\tilde g:=(\Phi^{-1}\circ \pi_\Gamma)^*g$
	satisfies
	\begin{align*}
		\tilde g_{ij}
		=
		\delta_{ij}+O(|x|^{-\tau}),
		\qquad
		\partial^\alpha \tilde g_{ij}
		=
		O(|x|^{-\tau-|\alpha|})
	\end{align*}
	for every multi-index $\alpha$.
	
	\medskip
	
	Our starting point is the exterior chart obtained by Tian--Viaclovsky in \cite{TV}. They carried out the following assumptions:
	
	\begin{assumption}\label{ass:TVbaseline}
		Let $(M,g)$ be a complete and noncompact $4$--manifold. Assume that $(M,g)$ satisfies the
		following hypotheses:
		\begin{enumerate}
			\item the scalar curvature $\Scal_g$ vanishes on $M$;
			\item $\int_M |\Rm_g|^2\,dV_g<\infty$;
			\item the Sobolev constant $C_S(M,g)$ is finite;
			\item the first Betti number $b_1(M)<\infty$;
			\item 
			\begin{enumerate}
				\item $g$ is self-dual or anti-self-dual, or
				\item $g$ has harmonic curvature,
			\end{enumerate}
		\end{enumerate}
	\end{assumption}
	
	and proved
	
	\begin{theorem}[\cite{TV}]\label{thm:baseline-ALE}
		Assume Assumption~\ref{ass:TVbaseline}. Then $M$ has finitely many ends, and each end is ALE
		of order $\tau$ for every $\tau<2$.
	\end{theorem}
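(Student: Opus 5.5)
Under Assumption~\ref{ass:TVbaseline} we follow the Tian--Viaclovsky strategy, which runs in three stages. First we prove an $\varepsilon$-regularity estimate. Second we establish volume growth and finiteness of ends. Third we obtain a weak ALE structure and then improve it to order $\tau<2$ using the elliptic system satisfied by the curvature.

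\textbf{Stage 1: $\varepsilon$-regularity.} In both cases of (5) the curvature satisfies an elliptic equation of the schematic form $\Delta \Rm = \Rm * \Rm$.
\begin{itemize}
\item If $g$ has harmonic curvature, then $\delta \Rm=0$ and $d^\nabla\Rm=0$ by Bianchi, and a Weitzenböck formula gives the equation.
\item If $g$ is (anti-)self-dual with $\Scal=0$, the half Weyl tensor satisfies $\delta W^\pm=0$, hence the same kind of equation; the traceless Ricci tensor satisfies an analogous equation because $\Scal=0$ and the Bach tensor couples to $W^\pm$.
\end{itemize}
The finite Sobolev constant lets us run Moser iteration, which yields $\sup_{B_{r}(p)}|\Rm|\le C r^{-2}\|\Rm\|_{L^2(B_{2r}(p))}$ whenever $\|\Rm\|_{L^2(B_{2r})}<\varepsilon$, together with corresponding higher-derivative bounds. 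Since $\int|\Rm|^2<\infty$, the $L^2$ norm of $\Rm$ outside large balls tends to zero. Therefore $|\Rm|(x)=o(r(x)^{-2})$, where $r$ is distance to a base point.

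\textbf{Stage 2: volume growth and ends.} The Sobolev inequality gives Euclidean lower volume growth $\mathrm{Vol}(B_r)\ge c r^4$. For an upper bound we use the curvature decay $o(r^{-2})$ and a Bishop-type comparison on annuli, following Bando--Kasue--Nakajima, to get $\mathrm{Vol}(B_r)\le C r^4$. Lower volume growth also bounds the number of disjoint large-scale components of each annulus $A(r,2r)$. Hence $M$ has finitely many ends, and each end is asymptotically a cone.

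\textbf{Stage 3: ALE coordinates and the order $\tau<2$.}
\begin{enumerate}
\item Rescale each annulus $A(r,2r)$ to unit size. The rescaled curvature tends to zero, and Cheeger--Gromov compactness with injectivity radius control (from the volume bounds) shows that the annuli converge to flat annuli, namely quotients of $\R^4$ by some $\Gamma\subset\SO(4)$. The hypothesis $b_1<\infty$, together with the topology of the end, is what rules out collapsed or nonorbifold limits and fixes $\Gamma$. Gluing the rescaled harmonic coordinates gives an ALE chart of some small order $\tau_0>0$.
\item Improve the decay. With $|\Rm|=O(r^{-2-\tau_0})$ in hand, use the elliptic system for $\Rm$ on the end: its indicial roots imply that $L^2$ curvature solutions decay like $r^{-4}$, up to an arbitrarily small loss. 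This follows from a Kato-improved integral estimate or a three-annulus argument.
\item Pass to harmonic coordinates, in which $\Delta_g x^i=0$ and $\Ric$ is an elliptic expression in $g_{ij}$. Weighted Schauder estimates, with weights chosen to avoid the indicial roots of the flat Laplacian, upgrade this to $g_{ij}-\delta_{ij}=O(r^{-\tau})$ with the corresponding derivative bounds, for every $\tau<2$. The threshold $2$ arises because $r^{-2}$ is exactly the first harmonic decay rate that can occur.
\end{enumerate}

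\textbf{Main obstacle.} We expect the main obstacle to be the initial step from $o(r^{-2})$ curvature to an honest polynomial rate $r^{-2-\tau_0}$, the Bando--Kasue--Nakajima-type improvement. We would attempt it via the refined Kato inequality for these elliptic systems, which gives that $|\Rm|^{\alpha}$ is subharmonic for a suitable $\alpha<1$, combined with the Sobolev inequality on annuli and iteration. This step is the crux of \cite{TV}.
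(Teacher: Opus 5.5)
The paper gives no proof of this statement; it quotes it from \cite{TV}. The benchmark is therefore Tian--Viaclovsky's argument. Your outline follows its overall architecture, but three steps fail as written.

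\textbf{First, the anti-self-dual branch.} The nonvanishing half of the Weyl tensor is not divergence-free. If $W^+=0$, then $\delta W^-=\delta W$ is a nonzero multiple of the Cotton tensor. When $\Scal=0$, this vanishes exactly when $g$ has harmonic curvature, so your claim would merge branch (a) into branch (b). The difference is real: a K\"ahler metric with harmonic curvature has parallel Ricci tensor, hence is Ricci-flat if it is ALE. The Burns metric, however, is scalar-flat K\"ahler (hence anti-self-dual) but not Ricci-flat. So this branch does not give you $\Delta\Rm=\Rm*\Rm$. What you actually have is $\Delta\Ric=\Rm*\Ric$ (Bach-flatness with $\Scal=0$) together with $\delta W^-=C(\nabla\Ric)$. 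The $\varepsilon$-regularity must be run on this coupled system, Ricci first and then Weyl.

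\textbf{Second, the volume bound in Stage~2.} Stage~2 does not yield $\mathrm{Vol}(B_r)\le Cr^4$. From $|\Rm|=o(r^{-2})$ you get a radial Ricci lower bound $-\epsilon(r)r^{-2}$ with $\epsilon(r)\to0$ at an unknown rate. Bishop comparison with this bound gives only $\mathrm{Vol}(B_r)\le C_\eta r^{4+\eta}$ for each $\eta>0$. It gives $Cr^4$ only under a condition such as $\int^\infty\epsilon(s)s^{-1}\,ds<\infty$, i.e.\ a polynomial rate. You obtain that rate only later, by an argument that itself needs the ALE structure of the end. The lower volume bound alone bounds neither the number of ends nor the number of components of large annuli. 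In \cite{TV}, this upper bound is precisely where $b_1(M)<\infty$ is used. You instead invoke $b_1$ to rule out collapse, which already follows from the Sobolev inequality.

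\textbf{Third, Stage~3 is circular and its decisive step is missing.} Convergence of rescaled annuli to flat annuli yields an ALE chart of order $0$ only, not of some order $\tau_0>0$. So item (2) of Stage~3 starts from a rate $|\Rm|=O(r^{-2-\tau_0})$ that has not been proved; your last paragraph concedes this and leaves it as an intention. Moreover, for the second-order equation $\Delta\Rm=\Rm*\Rm$, the decaying indicial roots on $\R^4$ are $-2,-3,-4,\dots$. Finiteness of $\int|\Rm|^2$ excludes only $-2$, so linear theory alone does not give $r^{-4}$.

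What makes the argument work is the first-order structure of the traceless Ricci tensor $E$:
\begin{itemize}
\item In the anti-self-dual scalar-flat case, $E$ is the curvature of the anti-self-dual connection induced on $\Lambda^+$.
\item In the harmonic-curvature case, $E$ is a trace-free Codazzi tensor.
\end{itemize}
In both cases one has the refined Kato inequality $|\nabla E|^2\ge\frac32|\nabla|E||^2$. Combined with $\Delta E=\Rm*E$, this gives $\Delta|E|^{1/2}\ge-C|\Rm|\,|E|^{1/2}$. Since $|\Rm|=o(r^{-2})$, each $r^{-p}$ with $p<2$ is a strict supersolution of $\Delta+C|\Rm|$ on the order-$0$ end. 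The maximum principle then gives $|\Ric|=O(r^{-2p})$, i.e.\ $O(r^{-4+\epsilon})$, directly from $o(r^{-2})$, with no intermediate positive order. Only after this, together with the corresponding control of the Weyl part through $\delta W=C$, do the harmonic-coordinate and weighted estimates of your item (3) give order $\tau$ for every $\tau<2$.
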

	
	\medskip
	
	We now fix one end $\Omega$ of $M$, choose an exterior chart $\Phi:\Omega\to (\R^4\setminus B_R)/\Gamma$ and work on the corresponding $\Gamma$-cover $\Phi^{-1}\circ\pi_\Gamma:\Omega_R\to\Omega$. All asymptotic notation below is understood in these fixed coordinates, with respect to the
	Euclidean background $(\gE)_{ij}=\delta_{ij}$ on $\Omega_R$, and $r:=|x|$ always denotes the Euclidean radius. In particular,
	$\mathcal O_\infty(|x|^{-k})$ means that for every multi-index $\alpha$, there holds $\partial^\alpha f = O(|x|^{-k-|\alpha|})$.
	
	\medskip
	
	We shall use both alternatives in Assumption~\ref{ass:TVbaseline}(5). Their
	relation with Bach-flatness is different. In dimension $4$, a self-dual or anti-self-dual metric is Bach-flat. The Bach tensor is given by
	\begin{align*}
		\Bach(g)_{ij}
		=
		\nabla^k\nabla^\ell \Weyl(g)_{ikj\ell}
		+
		\frac12\,\Ric(g)^{k\ell}\,\Weyl(g)_{ikj\ell}.
	\end{align*}
	The second alternative, namely harmonic curvature, is not the same as
	Bach-flatness in general. However, together with $\Scal_g=0$, it implies the vanishing of the Cotton tensor, and this condition can also be treated by the same fourth-order analysis at infinity. This point will be explained in Section~\ref{sec:gauge}.
	
	\subsection{Main results and related works}\label{sec:main123}
	
	In the first part of our paper, we construct a preferred coordinate system
	near infinity for scalar-flat ALE manifolds satisfying
	Assumption~\ref{ass:TVbaseline}. This provides a clearer understanding of
	the geometry at infinity. This study is related to several developments in ALE geometry, particularly concerning refined asymptotics at infinity.
	
	Under fast curvature decay and maximal volume growth, Bando--Kasue--Nakajima
	constructed global coordinates at infinity and proved ALE structure \cite{BKN}.
	In the Ricci-flat setting, Cheeger--Tian studied asymptotic cones under Euclidean
	volume growth and critical integral curvature bounds \cite{CT94}. In dimension $4$,
	Kronheimer constructed ALE hyper-K\"ahler spaces, relating ALE geometry to quotient
	surface singularities and their resolutions \cite{Kro89}. Minerbe further studied
	gravitational instantons from the viewpoint of volume growth, including the ALF case
	arising from cubic volume growth \cite{Minerbe}.
	
	More refined asymptotic invariants have been studied in the Einstein and Ricci-flat ALE
	setting. Biquard--Hein introduced the renormalized volume of four-dimensional
	Ricci-flat ALE spaces and proved its sharp nonpositivity and rigidity \cite{BH}.
	Wang--Yin later constructed preferred coordinates at infinity, defined a Weyl tensor at
	infinity, and related these data to the renormalized volume for Ricci-flat ALE spaces
	and orbifolds \cite{WY}. These works show that the leading terms at infinity can carry
	intrinsic geometric information.
	
	ALE K\"ahler geometry provides another important source of examples and comparison
	results. Besides Kronheimer's hyper-K\"ahler spaces, Calderbank--Singer constructed
	complete ALE scalar-flat K\"ahler metrics on toric resolutions of isolated cyclic quotient
	singularities \cite{CS04}. Further existence, deformation, compactness, and
	classification results for ALE K\"ahler manifolds and scalar-flat K\"ahler ALE surfaces
	were developed by Lock--Viaclovsky, Han--Viaclovsky, and
	Hein--R\u{a}sdeaconu--\c{S}uvaina; see for example \cite{LV19,HV,HV20,HRS}.
	Hein--LeBrun proved a mass formula in ALE K\"ahler geometry and showed that, in the
	scalar-flat K\"ahler ALE case, the mass has a topological interpretation \cite{HL}.
	Related analytic and metric characterizations of ALE K\"ahler manifolds were obtained
	by Ni--Shi--Tam \cite{NST}.
	
	The analytic framework most directly used in this paper comes from
	Tian--Viaclovsky's work on four-dimensional critical metrics. Their results give
	curvature decay, ALE structure, and compactness for Bach-flat metrics and metrics with
	harmonic curvature under finite $L^2$ curvature, Sobolev control, and mild topological
	assumptions; see \cite{TV,TV05b,TV08,VPCMI}. For related background, see also
	\cite{Anderson06}. We use this ALE framework as the starting point, but our aim is to obtain a finer description of the leading asymptotic terms in the scalar-flat ALE setting covered by Assumption~\ref{ass:TVbaseline}.
	
	\medskip
	
	Before stating the main results, we fix some notations. For $\lambda\in\R$, let
	$\Xi(\lambda)$ denote the scalar pair-symmetric tensor defined by
	\begin{align*}
		\Xi(\lambda)_{ijk\ell}
		=
		-\frac{\lambda}{9}\delta_{ij}\delta_{k\ell}
		+
		\frac{2\lambda}{9}
		\bigl(\delta_{ik}\delta_{j\ell}+\delta_{i\ell}\delta_{jk}\bigr).
	\end{align*}
	
	The first main theorem also involves a normalized Schwarzschild reference metric, denoted by $g^{\Sch,\sharp}_\lambda$. Since throughout our study, we work on a fixed ALE end $\Omega$ guaranteed by Theorem~\ref{thm:baseline-ALE}; hence, the ADM mass $m_\ADM$ of this end and the asymptotic group $\Gamma$, in view of $\Phi:\Omega\to (\R^4\setminus B_R)/\Gamma$, are fixed. Set $m_{\Sch}:=|\Gamma|m_{\ADM}$, and the isotropic Schwarzschild metric $g^{\Sch}:=U^2 \gE$ on the cover $\R^4\setminus B_R$, where $U(x):=1+\frac{m_{\Sch}}{2r^2}$. Then the Schwarzschild reference metric $g^{\Sch,\sharp}$ is defined as the pull-back of the metric $g^{\Sch}$ via the radial change of variables $\Phi^{\Sch}(x):=x-m_\Sch\frac{x}{r^2}$, i.e.,
	\begin{align*}
		g^{\Sch,\sharp}:=(\Phi^{\Sch})^*g^{\Sch}_\lambda.
	\end{align*}
	
	We now state the first two main results.
	
	\begin{theorem}\label{thm:main1}
		Let $(M^4,g)$ be a complete noncompact Riemannian manifold satisfying Assumption~\ref{ass:TVbaseline}, and fix an ALE end
		$\Omega$ modeled on $(\R^4\setminus B_{R_0})/\Gamma$, where $\Gamma$ is a finite subgroup of $\SO(4)$ acting freely on $\R^4\setminus B_{R_0}$. Then there exist a number $\sigma\in(0,1)$ and $R\geq R_0$ such that, after lifting to the
		$\Gamma$-cover of the end, there is a diffeomorphism
		\begin{align*}
			\Psi:\widetilde{\Omega}\longrightarrow \R^4\setminus B_R
		\end{align*}
		with the following properties. If $x=\Psi(\cdot)$ and $g_{ij}(x)$ denotes the expression of the metric $g$ in these coordinates, then
		\begin{enumerate}
			\item The metric admits the expansion
			\begin{align}\label{eq:intro-main-expansion}
				g_{ij}(x)
				=
				\delta_{ij}
				+
				\left( \Xi\bigl(9|\Gamma|\,m_{\ADM}(g)\bigr)_{ijk\ell}
				+
				(\Weyl_\infty)_{ik\ell j}\right)\frac{x_kx_\ell}{|x|^4}
				+
				\mathcal O_\infty(|x|^{-2-\sigma}),
			\end{align}
			where $\Weyl_\infty$ is the algebraic Weyl tensor and $m_\ADM$ is the ADM mass of the end.
			\item The metric $g$ and the Schwarzschild reference metric $g^{\Sch,\sharp}$ satisfy the following relative gauge condition:
			\begin{align*}
				g^{ij}\Bigl(\Gamma(g)^k_{ij}-\Gamma(g^{\Sch,\sharp})^k_{ij}\Bigr)=0
				\quad \text{for any}\ k.
			\end{align*}
		\end{enumerate}
	\end{theorem}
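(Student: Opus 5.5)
Starting from the Tian--Viaclovsky chart of Theorem~\ref{thm:baseline-ALE}, the plan is to upgrade the order-$\tau$ coordinates ($\tau<2$ arbitrary) in three stages. First, fix the gauge. Then extract the homogeneous $|x|^{-2}$ term from a linear fourth-order elliptic equation. Finally, decompose that term algebraically.

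\textbf{Step 1: relative gauge.} I would impose item (2) first, by solving for new coordinates $y=x+\xi(x)$ such that the identity map $(\Omega,g)\to(\R^4\setminus B_R,g^{\Sch,\sharp})$ is harmonic in $y$. This is a nonlinear elliptic system whose linearization is $\DeltaE\xi^k=F^k$. The source satisfies $F=\mathcal O_\infty(|x|^{-1-\tau})$, coming from the Christoffel symbols of $g$ and of $g^{\Sch,\sharp}$. For $1<\tau<2$, the weight $-\tau$ is nonexceptional for the Euclidean Laplacian on $\R^4\setminus B_R$, so weighted Schauder theory gives a solution $\xi=\mathcal O_\infty(|x|^{1-\tau})$. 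A fixed-point argument then handles the quadratic error, and the result is a diffeomorphism $\Psi$ for $R$ large. In these coordinates we still have $h:=g-\gE=\mathcal O_\infty(|x|^{-\tau})$, and the gauge condition $g^{ij}(\Gamma(g)-\Gamma(g^{\Sch,\sharp}))^k_{ij}=0$ holds exactly.

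\textbf{Step 2: linearized equation and expansion.} In the self-dual/anti-self-dual case $g$ is Bach-flat and scalar-flat. In the harmonic-curvature case, $\Scal_g=0$ makes $\Ric$ Codazzi, so the Cotton tensor vanishes; taking one more divergence yields a fourth-order equation of the same principal type (this is the point deferred to Section~\ref{sec:gauge}). Write $k:=h-(g^{\Sch,\sharp}-\gE)$. In the relative gauge, the linearization of $\Bach$ (respectively of the divergence of Cotton) at $\gE$ becomes, up to lower-order and quadratic terms, $\DeltaE^2 k=Q$. Here $Q$ is quadratic in $h$ and its derivatives, so $Q=\mathcal O_\infty(|x|^{-4-2\tau})$. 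The Schwarzschild part is harmless: it is conformally flat and scalar-flat to leading order, so it contributes only $\mathcal O_\infty(|x|^{-4-2\tau})$ errors. Now bootstrap using the indicial roots of $\DeltaE^2$ on $\R^4$, which are the integer rates. Since $k$ decays and $-4-2\tau<-6$ once $\tau>1$, the solution splits as $k=k_2+\mathcal O_\infty(|x|^{-2-\sigma})$. The term $k_2$ is homogeneous of degree $-2$, built from biharmonic homogeneous functions; possible $\log$ terms are excluded because the source decays strictly faster than the critical rate. I expect this step to be the main obstacle. The difficulty is making the gauge-fixed linear operator genuinely elliptic for both curvature alternatives, and showing that the degree-$-2$ biharmonic homogeneous solutions (which include $x_kx_\ell/|x|^4$-type terms and $\log$-free harmonic pieces) reduce to the form $c_{ijk\ell}x_kx_\ell|x|^{-4}$ after the residual gauge freedom is used. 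I would first try to kill the remaining degree-$-2$ terms not of this form by a further coordinate change $\xi$ homogeneous of degree $-1$, harmonic, so that the gauge is preserved.

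\textbf{Step 3: algebraic identification.} The coefficient tensor $c$ must satisfy the linearized gauge condition and linearized scalar-flatness; the latter, together with the Bach/Cotton equation, holds at top order. I would decompose $c$ under $\SO(4)$ into a scalar trace part and a Weyl-type part. The scalar part must be a multiple $\Xi(\lambda)$. Computing the ADM mass flux $\oint(\partial_j g_{ij}-\partial_i g_{jj})\nu^i$ over $S^3/\Gamma$ shows that only the trace part contributes, which forces $\lambda=9|\Gamma|m_{\ADM}$; the factor $|\Gamma|$ arises because the integral is taken on the cover. The trace-free remainder satisfies the Weyl symmetries (including the Bianchi identity, which comes from the vanishing of the linearized Ricci tensor at order $|x|^{-4}$). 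It therefore defines $\Weyl_\infty$ and yields \eqref{eq:intro-main-expansion}. Finally, one checks that the gauge from Step 1 survives the degree-$-1$ correction of Step 2 up to an $\mathcal O_\infty(|x|^{-3-\sigma})$ error, which can be absorbed by re-solving Step 1 once more.
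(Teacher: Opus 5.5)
You take a different route from the paper. You impose the relative gauge $\mathscr V(g,g^{\Sch,\sharp})=0$ directly on the Tian--Viaclovsky chart and then expand $k=h-(g^{\Sch,\sharp}-\gE)$. The paper instead first imposes the Euclidean Bianchi gauge, then kills partial traces with $x=y+By/|y|^2$, isolating the scalar obstruction $\lambda(A)$, and only at the end repairs the relative gauge by an $\mathcal O_\infty(r^{-1-\sigma})$ correction.

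Steps 1--2 are essentially sound, since $g^{\Sch,\sharp}$ is exactly scalar-flat and conformally flat. You do need to absorb the trace terms using $\Scal=0$, as the paper does. The obstacle you flag in Step 2 is not real. Degree $-2$ homogeneous biharmonic functions on $\R^4\setminus\{0\}$ are exactly $r^{-2}(a+Q_{k\ell}\theta_k\theta_\ell)$, so $k_2=c_{ijk\ell}x_kx_\ell r^{-4}$ automatically. Moreover, no nonzero harmonic homogeneous vector fields of degree $-1$ exist there, so your extra correction would be vacuous anyway.

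The genuine gap is Step 3, which is where the content of the theorem sits. There are three problems.
\begin{enumerate}
\item Pair-symmetric tensors do not split under $\SO(4)$ into a scalar part plus a Weyl part. The invariant subspace is two-dimensional, and there are partial-trace components and further irreducible pieces that must be shown to vanish.
\item The mechanism you give for the first Bianchi identity is false. These metrics are not Ricci-flat, and in Bianchi gauge $\delta\Ric(k_2)=-\frac12\DeltaE k_2$. For $c=s(W)$ with $W$ Weyl, the quadratic $W_{ik\ell j}x_kx_\ell$ is harmonic, so $\DeltaE(W_{ik\ell j}x_kx_\ell r^{-4})=-8\,W_{ik\ell j}x_kx_\ell r^{-6}\neq0$. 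Vanishing linearized Ricci at order $r^{-4}$ would therefore force $\Weyl_\infty=0$, contradicting the Burns example.
\item The bookkeeping counts the mass twice. In your gauge, $\Xi(9|\Gamma|m_{\ADM})$ is already the leading coefficient of $g^{\Sch,\sharp}-\gE$. Assigning $c$ a scalar part $\Xi(9|\Gamma|m_{\ADM})$ again is wrong. What must be shown is $c\in\widetilde{\mathcal W}$, with no invariant part at all.
\end{enumerate}

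The gap can be closed inside your framework.
\begin{enumerate}
\item At order $r^{-3}$ the relative gauge reads $\BE(h^{(2)}[c])=0$, so $c$ satisfies \eqref{eq:bianchi-tensor-form}. Hence $V^c$ is symmetric and $\tr U^c+2\tr V^c=0$.
\item Since $\Scal(g)=\Scal(g^{\Sch,\sharp})=0$, you get $\DeltaE\tr k_2=0$, so $(U^c)_0=0$. The argument of Proposition~\ref{prop:residual-charge-vanishes} then gives $(V^c)_0=0$.
\item A direct flux computation gives $\mathfrak m(A)=(\tr V+2\tr U)/(36|\Gamma|)$ for every pair-symmetric $A$. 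The full coefficient is $\Xi(9|\Gamma|m_{\ADM})+c$, and $\mathfrak m(\Xi(9|\Gamma|m_{\ADM}))=m_{\ADM}$. Coordinate invariance of the mass therefore gives $\tr V^c+2\tr U^c=0$.
\item The two trace relations force $U^c=V^c=0$. Then \eqref{eq:bianchi-tensor-form} becomes $c_{i(jk\ell)}=0$, which together with symmetry in the last pair is the cyclic identity. Proposition~\ref{prop:Wtilde-eq} yields $c=s(\Weyl_\infty)$.
\end{enumerate}

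With this repair your route is shorter than the paper's. Subtracting $g^{\Sch,\sharp}$ from the start replaces three steps: the Euclidean Bianchi gauge, the $\Psi_4(B)$ trace-killing problem, and the final gauge repair. (The paper's $B$ ends up being a multiple of the identity, i.e.\ exactly a Schwarzschild-type radial shift.) The mass then enters through flux invariance rather than Haar averaging. What you give up is that the mass becomes an input, needed in the original chart to build $g^{\Sch,\sharp}$. In the paper it instead emerges as the obstruction to the trace-killing problem.
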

	
	\begin{remark}
		Tian--Viaclovsky's theorem in \cite{TV} gives an exterior coordinate system in
		which
		\begin{align*}
			g_{ij}
			=
			\delta_{ij}
			+
			\mathcal O_\infty(r^{-\tau})
			\qquad
			\text{for all } \tau<2 .
		\end{align*}
		Theorem~\ref{thm:main1} starts from this ALE structure and refines the
		borderline order $r^{-2}$. After a further choice of preferred coordinates, we identify the first non-trivial term in the expansion of the metric $g$ as:
		\begin{align*}
			\left( \Xi\bigl(9|\Gamma|\,m_{\ADM}(g)\bigr)_{ijk\ell}
			+
			(\Weyl_\infty)_{ik\ell j}\right)\frac{x_kx_\ell}{|x|^4}
		\end{align*}
		In this sense, our result is a refinement beyond the Tian--Viaclovsky ALE decay estimate: the mass part and the Weyl-type
		part of the asymptotic geometry are separated explicitly.
	\end{remark}
	
	\begin{remark}\label{rem:intro-WY-main1}
		Theorem~\ref{thm:main1} is inspired by the preferred-coordinate construction
		of Wang--Yin for Ricci-flat ALE ends \cite{WY}. In their setting, the leading
		term in a Bianchi coordinate is purely of Weyl type. In the present scalar-flat setting under Assumption~\ref{ass:TVbaseline},
		the same Weyl-type mechanism remains, but an additional scalar component appears. Thus Theorem~\ref{thm:main1} can be viewed as an analogue of the Wang--Yin expansion in the Tian--Viaclovsky ALE setting, with the extra mass term separated explicitly.
	\end{remark}
	
	\begin{remark}\label{rem:intro-Shiromizu-Tomizawa}
		The decomposition in \eqref{eq:intro-main-expansion} echoes an observation of Shiromizu--Tomizawa on spatial infinity in higher-dimensional
		spacetimes \cite{ShiromizuTomizawa}. In spacetime dimensions higher than four,
		the geometry at spatial infinity need not be maximally symmetric. In the
		static vacuum setting, the Geroch-type multipole moments do not, in general,
		determine the full local asymptotic structure; additional data are needed, at
		least a Weyl-type tensor intrinsic to the geometry at spatial infinity.
		
		Theorem~\ref{thm:main1} gives an ALE Riemannian analogue of this phenomenon. After the preferred normalization, the leading asymptotic data split into an ADM mass part and a Weyl-type part $\Weyl_\infty$. The latter should be viewed as additional boundary data at infinity, not captured by the mass alone.
	\end{remark}
	
	\medskip
	
	We also need to know whether the preferred coordinates are essentially unique. The
	next result gives this uniqueness statement. It says that if two coordinate systems both
	have the form described in Theorem~\ref{thm:main1}, then their transition map is
	asymptotically a Euclidean motion. Consequently, the leading coefficient in
	\eqref{eq:intro-main-expansion} is well-defined up to the natural orthogonal action.
	
	\begin{theorem}\label{thm:main2}
		Assume $(M,g)$ and $\Omega$ satisfy the hypotheses of Theorem~\ref{thm:main1}. Let $x$ and $y$ are two coordinate systems on $\widetilde \Omega$ such that each of them satisfies Theorem~\ref{thm:main1} (1)--(2). Specifically, the metric $g$ admits expansions in the two coordinate systems:
		\begin{align*}
			g_{ij}^{(x)}(x)
			&=
			\delta_{ij}
			+
			\left( \Xi\bigl(9|\Gamma|\,m_{\ADM}(g)\bigr)_{ijk\ell}
			+
			(\Weyl_\infty^{(x)})_{ik\ell j}\right)\frac{x_kx_\ell}{|x|^4}
			+
			\mathcal O_\infty(|x|^{-2-\sigma}),
			\\
			g_{ab}^{(y)}(y)
			&=
			\delta_{ab}
			+
			\left( \Xi\bigl(9|\Gamma|\,m_{\ADM}(g)\bigr)_{abpq}
			+
			(\Weyl_\infty^{(y)})_{apqb}\right)\frac{y_py_q}{|y|^4}
			+
			\mathcal O_\infty(|y|^{-2-\sigma})
		\end{align*}
		for some $\sigma\in(0,1)$. Then there exists an $A\in \mathrm O(4)$ such that
		\begin{align*}
			(\Weyl_\infty^{(x)})_{ijk\ell}
			=
			A_{ai}A_{bj}A_{ck}A_{d\ell}\,
			(\Weyl_\infty^{(y)})_{abcd}.
		\end{align*}
	\end{theorem}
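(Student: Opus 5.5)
The plan is to study the transition map $\phi:=y\circ x^{-1}$ between the two preferred charts on $\widetilde\Omega$ and show that, to the relevant order, it is a Euclidean motion. First, both charts are ALE of order $2$ (the displayed expansions give $g^{(x)}-\delta=\mathcal O_\infty(|x|^{-2})$ and likewise for $y$). The standard ALE uniqueness argument (Bartnik's, or Bando--Kasue--Nakajima's) then gives an $A\in\mathrm O(4)$ and $c\in\R^4$ with
\begin{align*}
y=Ax+c+\mathcal O_\infty(|x|^{1-\tau'})
\end{align*}
for some $\tau'>0$. To see this, compare $\phi^*\delta$ with $\delta$: $D\phi^{T}D\phi=I+O(|x|^{-2})$. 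Hence $D\phi$ is asymptotically orthogonal, and the derivative bounds force $D\phi$ to converge to a constant $A$.

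Second, I improve the remainder using the gauge condition (2) of Theorem~\ref{thm:main1}. Write $y=Ax+c+u(x)$. The relative gauge condition in the two charts says that, in each chart, $\Delta_g$ applied to the coordinate functions equals the contracted Schwarzschild Christoffel term. The Schwarzschild reference metric is rotationally symmetric, with the same $m_{\Sch}$ in both charts, so that term is $O(|x|^{-5})$ and radially determined. Hence $\Delta_g u=\mathcal O(|x|^{-3-\sigma})$ up to a difference of Schwarzschild terms centered at $0$ and at $c$. That difference is $O(|c|\,|x|^{-4})$, which is harmless. Since $\Delta_g=\Delta_{\gE}+\mathcal O_\infty(|x|^{-2})\partial^2+\mathcal O_\infty(|x|^{-3})\partial$, a bootstrap with weighted Schauder theory on the exterior domain gives the following. $u$ is harmonic for $\Delta_{\gE}$ up to an error that decays faster than $|x|^{-1}$. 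So $u$ equals a harmonic polynomial plus terms with a decaying expansion. The bound $u=o(|x|)$ excludes linear and higher homogeneous parts, and $u$ has no constant part beyond $c$. Therefore $u=h+\mathcal O_\infty(|x|^{-\sigma'})$, where $h$ is a combination of $|x|^{-2}$-homogeneous harmonic terms (of degree $-2$, e.g.\ $x_k/|x|^4$) together with possibly logarithmic or $O(1)$ corrections. The point is that $Du=\mathcal O_\infty(|x|^{-2-\sigma''})$ at worst, apart from the degree $-3$ part of $Dh$.

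Third, I pull back the metric. We have $g^{(x)}_{ij}=\partial_i y^a\partial_j y^b\,g^{(y)}_{ab}(y)$. Substitute $y=Ax+c+u$ and use $|y|^{-4}y_py_q=|x|^{-4}(Ax)_p(Ax)_q+O(|x|^{-3})$ (the translation only affects the order $|x|^{-3}$). The $|x|^{-2}$ homogeneous part on the left must then equal the rotated $|x|^{-2}$ part on the right, plus $\partial_iu^a A_{aj}+A_{ai}\partial_ju^a$. The latter is $O(|x|^{-3})$ by step two, so it does not contribute at order $|x|^{-2}$. Matching coefficients of the quadratic forms $x_kx_\ell/|x|^4$ gives
\begin{align*}
\Xi_{ijk\ell}+(\Weyl^{(x)}_\infty)_{ik\ell j}
=A_{ai}A_{bj}A_{pk}A_{q\ell}\bigl(\Xi_{abpq}+(\Weyl^{(y)}_\infty)_{apqb}\bigr)
\end{align*}
after symmetrizing in $(k,\ell)$. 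The $\Xi$ term is $\mathrm O(4)$-invariant, so it cancels. One caveat: matching the symmetrized-in-$(k,\ell)$ quadratic form only recovers $\Weyl_{ik\ell j}+\Weyl_{i\ell kj}$. For algebraic Weyl tensors this symmetrization is injective, because the curvature tensor is recovered from its symmetrized form by the usual polarization. This yields the claimed identity.

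The main obstacle is step two: ensuring that the correction $u$ cannot contribute a homogeneous degree $-1$ term. Such a term would make $Du$ of order $|x|^{-2}$ and would add a Lie-derivative term $\mathcal L_Z\delta$ to the $|x|^{-2}$ coefficient. Harmonic functions of degree $-1$ do not exist in $\R^4$: the decaying harmonic functions there have degrees $-2,-3,\dots$, and degree $-1$ is not an indicial root. What remains is that the inhomogeneous term in $\Delta_g u$ could produce a degree $-1$ particular solution. This is exactly where the gauge condition (2) is used. Both charts are normalized against the same Schwarzschild model, so the $|x|^{-3}$ part of $\Delta_g y^a$ agrees after the rotation $A$. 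The source is then $O(|x|^{-3-\sigma})$, and by weighted analysis $u=O(|x|^{-1-\sigma})$. If needed, I would first run the argument with the translation $c$ absorbed by re-centering, and check the $c$-dependent terms separately.
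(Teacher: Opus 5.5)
Your proposal is correct and follows essentially the paper's route. You show the transition map is $Ax+c$ plus a decaying error with $A\in\mathrm O(4)$. You then use the relative gauge, the $\mathrm O(4)$-covariance of $g^{\Sch,\sharp}$, and the harmless $O(|c|\,|x|^{-4})$ translation effect to make $\Delta_{\gE}$ of the error $O(|x|^{-3-\sigma})$, which rules out a degree $-1$ Lie-derivative contribution. Finally you match the $|x|^{-2}$ coefficients, cancel the invariant $\Xi$ part, and use injectivity of $s$ on algebraic Weyl tensors.

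One correction to your second step: the contracted Christoffel term of $g^{\Sch,\sharp}$ is $\tfrac{2\lambda}{3}x_k|x|^{-4}+O(|x|^{-5})$ with $\lambda=9|\Gamma|m_{\ADM}$, not $O(|x|^{-5})$. So the covariance cancellation stated in your last paragraph is genuinely needed; this is the paper's Steps 2--3.

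The paper also derives the rough affine approximation differently, from a rescaled bound $|D^2F|\le C|x|^{-1}$ plus an exterior Poisson lemma, rather than citing Bartnik or Bando--Kasue--Nakajima. It then bootstraps the error to $\mathcal O_\infty(|x|^{-2}\log|x|)$, although your $O(|x|^{-1-\sigma})$ already suffices for the coefficient matching.
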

	
	\begin{remark}\label{rem:intro-WY-main2}
		Theorem~\ref{thm:main2} shows that the leading data in
		\eqref{eq:intro-main-expansion} are independent of the remaining choice of
		preferred coordinates, up to the natural orthogonal action. This is analogous
		to the uniqueness result of Wang--Yin for the Weyl tensor at infinity in the
		Ricci-flat case \cite{WY}. In our setting, the mass coefficient is fixed, and
		only $\Weyl_\infty$ changes by the $\mathrm O(4)$--action. Thus
		$\Weyl_\infty$ is well-defined as asymptotic data of the end, modulo this
		orthogonal ambiguity.
	\end{remark}
	
	\medskip
	
	A useful test example is the Burns metric $g^{\Burns}$ on $\mathrm{Bl}_0\mathbb C^2$.  This
	is a complete scalar-flat K\"ahler ALE metric with trivial group at infinity;
	see \cite{LeBrun88,LeBrun91,HL}. Since a scalar-flat K\"ahler surface is anti-self-dual with respect to the
	complex orientation, the Burns metric belongs to the first branch of
	Assumption~\ref{ass:TVbaseline}. In this example the
	preferred-coordinate expansion \eqref{eq:intro-main-expansion} can be computed explicitly: the coefficient of
	the $r^{-2}$ term is
	\begin{align*}
		\Xi(3)_{ijk\ell}+(\Weyl_\infty^{\mathrm{Burns}})_{ik\ell j},
		\quad
		\Weyl_\infty^{\mathrm{Burns}}\neq 0.
	\end{align*}
	Since $\Gamma=\{1\}$, this gives $m_{\ADM}(g^{\Burns})=1/3$. Thus the Burns metric $g^{\Burns}$ shows that
	the leading asymptotic term in \eqref{eq:intro-main-expansion} may contain both a nonzero mass part and a nonzero
	Weyl-type part.  It also illustrates Theorem~\ref{thm:main2}: the mass part is
	unchanged under a change of preferred coordinates, while the Weyl tensor at
	infinity transforms by the natural $\mathrm O(4)$--action.  The detailed
	calculation is given in Subsection~\ref{subsec:burns-example}.
	
	\begin{figure}[htbp]
		\centering
		\begin{tikzpicture}[scale=0.95,>=Latex]
			% The blown-up manifold
			\draw[fill=gray!12,draw=black!70]
			(-2.6,1.1)
			.. controls (-2.0,1.65) and (-1.0,1.38) .. (-0.32,0.5)
			.. controls (-0.12,0.24) and (0.12,0.24) .. (0.32,0.5)
			.. controls (1.0,1.38) and (2.0,1.65) .. (2.6,1.1)
			-- (2.6,-1.1)
			.. controls (2.0,-1.65) and (1.0,-1.38) .. (0.32,-0.5)
			.. controls (0.12,-0.24) and (-0.12,-0.24) .. (-0.32,-0.5)
			.. controls (-1.0,-1.38) and (-2.0,-1.65) .. (-2.6,-1.1)
			-- cycle;
			
			% Exceptional divisor
			\draw[fill=red!24,draw=red!70!black,thick] (0,0) circle (0.43);
			
			% Large coordinate sphere
			\draw[blue!70!black,dashed,thick] (1.95,0) ellipse (0.78 and 1.2);
			
			% Arrow indicating the ALE end
			\draw[-{Latex[length=2.8mm]},thick] (3.05,1.45) -- (2.35,0.82);
			
			% Labels
			\node[align=center] at (-1.75,1.82)
			{$\mathrm{Bl}_0\mathbb C^2$};
			
			\node[red!70!black,align=center] at (0,-1.67)
			{$E\simeq\mathbb{CP}^1$};
			
			\node[blue!70!black,align=center] at (2.85,-1.67)
			{large sphere\\[-1mm] $|y|=R$};
			
			\node[align=center] at (3.23,1.82)
			{ALE end\\[-1mm] $\simeq\mathbb R^4\setminus B_R$};
		\end{tikzpicture}
		\caption{The Burns metric on $\mathrm{Bl}_0\mathbb C^2$.  The exceptional
			divisor replaces the blown-up origin, while the metric is ALE at infinity.
			This example shows that the leading term in
			Theorem~\ref{thm:main1} can have both a mass part and a Weyl-type part.}
		\label{fig:intro-Burns-example}
	\end{figure}
	
	\medskip
	
	Theorems~\ref{thm:main1} and~\ref{thm:main2} associate to a scalar-flat ALE
	four-manifold a preferred homogeneous $|x|^{-2}$ coefficient at infinity, as \eqref{eq:intro-main-expansion}. As an application, we explain what this Weyl-type part records in the scalar-flat K\"ahler ALE setting, when the complex surface is the minimal resolution of a quotient surface singularity.
	
	Fix a finite group $\Gamma\subset\SO(4)$ from the ALE setting above. Moreover, we assume that $\Gamma$ preserves a Euclidean complex structure, denoted by $J_\infty$, on
	$\R^4$.  After identifying this complex vector space with $\mathbb C^2$, we may
	regard $\Gamma$ as a finite subgroup of $\mathrm U(2)$.  Since $\Gamma$ acts
	freely on $\Sph^3$, it contains no complex reflections, and the quotient
	$\mathbb C^2/\Gamma$ has an isolated quotient singularity at the image of the
	origin.
	
	Let
	\begin{align*}
		\pi:X\longrightarrow \mathbb C^2/\Gamma
	\end{align*}
	be the minimal resolution of this singularity, and denote by $J$ the complex
	structure on $X$.  The quotient complex structure at infinity is identified with
	$J$ on the end through the biholomorphism
	$\pi:X\setminus \pi^{-1}(0)\to \mathbb C^2/\Gamma\setminus\{0\}$.
	
	We recall the notion of minimal resolution in the form used below; see
	\cite{BHPV,LV19}.
	
	\begin{definition}[Minimal resolution of $\mathbb C^2/\Gamma$]
		Let $\Gamma\subset\mathrm U(2)$ be a finite subgroup containing no complex
		reflections.  A pair $(X,\pi)$ is called the minimal resolution of
		$\mathbb C^2/\Gamma$ if $X$ is a smooth complex surface and
		\begin{align*}
			\pi:X\longrightarrow \mathbb C^2/\Gamma
		\end{align*}
		is a proper holomorphic map such that, writing $E=\pi^{-1}(0)$, the restriction
		\begin{align*}
			\pi:X\setminus E\longrightarrow \mathbb C^2/\Gamma\setminus\{0\}
		\end{align*}
		is biholomorphic, and $E$ contains no $(-1)$-curves.  The divisor $E$ is called
		the exceptional divisor.
	\end{definition}
	
	The minimal resolution is unique up to biholomorphism.  For quotient surface
	singularities, the exceptional divisor is a connected configuration of rational
	curves with negative definite intersection matrix.  In the cyclic case it is a
	Hirzebruch--Jung string.  In the non-cyclic case it is a tree with one central
	rational curve and three Hirzebruch--Jung strings attached to it
	\cite{BHPV,LV19}.
	
	The resolution map gives $X$ a natural end.  Since $\pi$ is biholomorphic away
	from the compact exceptional divisor $E$, for large $R$ the set
	\begin{align*}
		X_\infty(R)
		=
		\pi^{-1}\bigl((\mathbb C^2\setminus B_R)/\Gamma\bigr)
	\end{align*}
	is an end of $X$, and $\pi$ identifies it with
	$(\mathbb C^2\setminus B_R)/\Gamma$.  After lifting this end to the
	$\Gamma$-cover, we get standard holomorphic coordinates $z=(z^1,z^2)$ on
	$\mathbb C^2\setminus B_R$.
	
	There are several existence results for scalar-flat K\"ahler ALE metrics in this
	setting.  Kronheimer constructed the hyperk\"ahler ALE metrics in the ADE case
	$\Gamma\subset\SU(2)$ \cite{Kro89}.  Calderbank--Singer constructed scalar-flat
	K\"ahler ALE metrics on resolutions of cyclic quotient singularities
	\cite{CS04}.  Lock--Viaclovsky proved the corresponding existence result on
	minimal resolutions for non-cyclic finite subgroups $\Gamma\subset\mathrm U(2)$
	without complex reflections \cite[Theorem~1.3]{LV19}. We fix a scalar-flat K\"ahler ALE metric
	$g$ on $(X,J)$ whose ALE end is the quotient end determined by $\pi$, and we
	study the leading Weyl tensor at infinity obtained from that end.
	
	Our third main result can be stated as
	
	\begin{theorem}\label{thm:main3}
		Let $\Gamma\subset\mathrm U(2)$ be a finite subgroup acting freely on
		$\Sph^3$, set $Y=\mathbb C^2/\Gamma$, and let $\pi:X\to Y$ be the minimal
		resolution.  Let $g$ be a scalar-flat K\"ahler ALE metric on $(X,J)$ whose end
		is the quotient end determined by $\pi$.  For any preferred coordinate system
		$x$ obtained by applying Theorem~\ref{thm:main1} starting from the holomorphic
		ALE coordinates induced by $\pi$, let $\Weyl_\infty^{(x)}(g)$ be the Weyl
		tensor at infinity in these coordinates.  Then the following conditions are
		equivalent:
		\begin{enumerate}[label=\textup{(\arabic*)}]
			\item $\Weyl_\infty^{(x)}(g)=0$ for one, equivalently for every, such
			preferred coordinate system $x$;
			\item $K_X=\pi^*K_Y$.
		\end{enumerate}
		In other words, the leading Weyl tensor at infinity vanishes exactly when the
		minimal resolution is crepant.
	\end{theorem}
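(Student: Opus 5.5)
Since $\Gamma$ acts freely on $\Sph^3$, we have $\Gamma\subset\mathrm U(2)$, and the whole question is whether $\Gamma\subset\SU(2)$. My plan is to show that each of the two conditions in Theorem~\ref{thm:main3} is equivalent to this inclusion.

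\textbf{(2) is equivalent to $\Gamma\subset\SU(2)$.} This is the algebraic half and uses only standard facts about quotient surface singularities \cite{BHPV,LV19}. Write $K_X=\pi^*K_Y+\sum a_iE_i$; the discrepancies $a_i$ are fixed by adjunction against the negative definite intersection matrix of $E$. If $\Gamma\subset\SU(2)$, the form $dz^1\wedge dz^2$ is $\Gamma$-invariant and descends to $Y$. For the minimal resolution of an ADE singularity it extends without zeros across the $(-2)$-curves, so $a_i=0$. Conversely, suppose $K_X=\pi^*K_Y$. Then $K_Y$ is Cartier, which makes $Y$ Gorenstein. For a quotient singularity without complex reflections, Watanabe's criterion then gives $\det\gamma=1$ for all $\gamma\in\Gamma$, i.e.\ $\Gamma\subset\SU(2)$.

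\textbf{(1) is equivalent to $\Gamma\subset\SU(2)$, forward direction.} If $\Gamma\subset\SU(2)$, the minimal resolution carries Kronheimer's hyperk\"ahler ALE metrics \cite{Kro89}. However, $g$ is an arbitrary scalar-flat K\"ahler ALE metric, so I would instead use the Hein--LeBrun mass formula \cite{HL}. In the scalar-flat case it expresses $m_\ADM$ as a negative multiple of $\langle c_1(X),[\omega]\rangle$ (compactly supported pairing) plus a term vanishing when $\Gamma\subset\SU(2)$. Crepancy makes $c_1$ trivial near $E$, which gives $m_\ADM=0$. To see that $\Weyl_\infty$ vanishes as well, I would pass to the canonical bundle. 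On the end, $K_X$ is trivialized by the $\Gamma$-invariant form $dz^1\wedge dz^2$. The Ricci form $\rho$ of $g$ is anti-self-dual (scalar-flat) and exact on the end, $\rho=i\partial\bar\partial\log\det g$. Its leading $r^{-4}$ term is governed by the $r^{-2}$ coefficient of $g$.

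\textbf{Key step: computing the leading term.} Here I would derive a formula for the $r^{-2}$ coefficient of a scalar-flat K\"ahler ALE metric in holomorphic coordinates. By Hein--LeBrun/Lock--Viaclovsky asymptotics, the K\"ahler potential has the form $\frac{|z|^2}{2}+a\log|z|+\ldots$, up to a pluriharmonic $\Gamma$-invariant correction. The coefficient $a$ encodes both the mass and $\tfrac{1}{2\pi}\int_{S^3/\Gamma}$ of the Ricci connection form, i.e.\ the first Chern class of $K_X$ relative to the flat trivialization at infinity.

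The main computation is to apply the gauge transformation of Theorem~\ref{thm:main1}. A $\log|z|$ potential term gives a $g$-perturbation of the form $a(\delta_{ij}-2x_ix_j/r^2)r^{-2}$ twisted by $J_\infty$. I would decompose the corresponding coefficient tensor into the $\Xi$ part and the Weyl part, checking the decomposition against the Burns example of Subsection~\ref{subsec:burns-example}, where $c_1\neq0$ and $\Weyl_\infty\neq0$. The expected outcome is that $\Weyl_\infty$ is a nonzero universal multiple of the $J_\infty$-Weyl tensor $\omega_\infty\otimes\omega_\infty$ (Weyl-projected), scaled by the ``Ricci flux'' $\int_{S^3/\Gamma}\rho$-potential.

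\textbf{Converse and the main obstacle.} That flux equals $-2\pi$ times the pairing of $c_1(K_X)-\pi^*c_1(K_Y)$ with the class dual to the end, which is $\sum a_i\,[E_i]\cdot$. Negative definiteness then shows the flux vanishes iff all $a_i=0$. Combined with the algebraic half, this gives (1)$\Leftrightarrow$(2); independence of the coordinate choice follows from Theorem~\ref{thm:main2}, since $\Weyl_\infty=0$ is $\mathrm O(4)$-invariant. The main obstacle is precisely this identification: showing that no other $r^{-2}$ contribution in the holomorphic chart, such as pluriharmonic terms or the change to preferred coordinates, produces a Weyl component. Handling it requires checking that such terms are pure gauge or are absorbed into $\Xi$.
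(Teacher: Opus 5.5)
There is a genuine gap in how you identify the scalar factor in front of $\mathcal W_{J_\infty}$, and in how you then decide when it vanishes.

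\textbf{What the factor actually is.} Carry out your key step explicitly. Start from the potential $\rho+c\log\rho+\psi$, apply the change of coordinates with $B=-\frac c2 I$, and subtract $\Xi(3c)$. This gives $\Weyl_\infty^{(x)}(g)=c\,\mathcal W_{J_\infty^{(x)}}$. The ADM integral then identifies $c=3|\Gamma|\,m_{\ADM}(g)$. So the factor is simply the logarithmic coefficient, i.e.\ the mass. The Hein--LeBrun formula turns it into
$-\frac{|\Gamma|}{\pi}\langle\clubsuit(c_1),[\omega]\rangle=-\frac{|\Gamma|}{\pi}\sum_i\mu_iA_i$,
where $K_X-\pi^*K_Y=-\sum_i\mu_iE_i$ and $A_i=\int_{E_i}\omega$.

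This quantity must depend on the K\"ahler class: under $g\mapsto t^2g$ the $r^{-2}$ coefficient, the mass and the $A_i$ all scale by $t^2$. Hence it cannot equal a pairing of $c_1(K_X)-\pi^*c_1(K_Y)$ with some topological class ``dual to the end''. Your ``Ricci flux'' $\int_{S^3/\Gamma}$ of a connection one-form is also not a well-defined integral. The Hein--LeBrun boundary term pairs a Ricci-potential one-form with $\omega$, and that is exactly where $[\omega]$ and the areas $A_i$ enter.

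\textbf{Why negative definiteness is not enough.} The factor is linear in the discrepancies with positive weights $A_i$. Negative definiteness of $Q$ therefore cannot decide when it vanishes: $\sum_i\mu_iA_i$ can be zero for $\mu\neq0$ of mixed sign. What is needed, and what the paper uses, is a sign argument:
\begin{enumerate}[label=\textup{(\roman*)}]
\item minimality gives $e_i\ge2$, so $H\mu=(e_i-2)_i\ge0$;
\item connectedness of $E$ makes $H=-Q$ an irreducible nonsingular $M$-matrix, so $H^{-1}$ is entrywise positive and either $\mu=0$ or every $\mu_i>0$;
\item $A_i>0$ because $[\omega]$ is K\"ahler.
\end{enumerate}

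Your argument never uses minimality, and without it this step genuinely fails. Blow up a point on the $(-3)$-curve of the resolution of $\frac13(1,1)$. The exceptional configuration is still connected with negative definite $Q$, but $\mu=\frac13(1,-2)$. So $\sum_i\mu_iA_i=0$ whenever $A_1=2A_2$, although the resolution is not crepant.

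\textbf{Remaining steps.} Once $\Weyl_\infty^{(x)}(g)=3|\Gamma|\,m_{\ADM}(g)\,\mathcal W_{J_\infty^{(x)}}$ is established, the forward direction follows directly from $m_{\ADM}=0$, so no separate canonical-bundle argument is needed. The detour through $\Gamma\subset\SU(2)$ and Watanabe's criterion is unnecessary but harmless.
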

	
	\begin{remark}
		In the present quotient setting, the crepant case is the ADE case.  Since
		$\Gamma$ acts freely on $\Sph^3$, it contains no complex reflections.  Hence,
		by Watanabe's theorem on invariant subrings, the quotient
		$\mathbb C^2/\Gamma$ is Gorenstein precisely when
		$\Gamma\subset\mathrm{SL}(2,\mathbb C)$ \cite{Watanabe74}.  Because
		$\Gamma\subset\mathrm U(2)$, this is equivalent to
		$\Gamma\subset\mathrm{SU}(2)$ with respect to the chosen complex structure at
		infinity. For nontrivial $\Gamma$, these quotients are exactly the rational
		double points, or Du Val singularities, with ADE exceptional configurations
		\cite{BHPV,Kro89}; the trivial group gives the smooth case $Y=\mathbb C^2$.
	\end{remark}

	\subsection{Idea of proof}
	
	The proof is organized around two ideas. The first is to separate the geometric
	part of the leading coefficient in the metric expansion at infinity. Starting
	from the Tian--Viaclovsky ALE chart, we put the metric into Euclidean Bianchi
	gauge. In this gauge, the equations coming from the two alternatives in
	Assumption~\ref{ass:TVbaseline} become a fourth-order elliptic system whose principal part is the Euclidean bi-Laplacian. This gives a
	first expansion at the borderline order $r^{-2}$, but the leading term coefficient, denote by $A$, obtained at
	this stage still contains coordinate freedom.
	
	The next step is algebraic. The point is not to guess the final normal form directly. Instead, we start from the algebraic properties that a Weyl-type tensor must satisfy, as detailed in Proposition~\ref{prop:Wtilde-eq}. Under the map $s$ defined as \eqref{eq:def-s-map}, a Weyl
	tensor gives a pair-symmetric tensor satisfying two more conditions: vanishing partial traces, and the first Bianchi identity. We therefore use these conditions as a guide to trace the Weyl-type part inside $A$.
	
	The pair symmetry is already present in the rough expansion. The main issue appears when we try to achieve the partial trace-free property by using coordinate transformation
	\begin{align}\label{eq:coordinate-transformation-introduction}
		x=\varphi(y)=y+u(y),
		\quad 
		u_i(y)=B_{ij}\tfrac{y_j}{|y|^2},
	\end{align}
	where the constant matrix $B$ is to be determined, to eliminate the remaining coordinate freedom. The target partial trace-free equations cannot, in general, be solved completely. A scalar
	obstruction remains (c.f. Lemma~\ref{lem:LA-Psi4-trace-normalization} and Proposition~\ref{prop:residual-charge-vanishes}). This obstruction is not an unwanted error; rather, it is precisely the component that cannot be eliminated by the above coordinate transformation, thereby holding the promise of being an intrinsic geometric quantity on the end. In fact, it arises from our tracking of the Weyl-type tensor and can be identified with the ALE ADM mass.
	
	Thus, by choosing a suitable constant matrix $B$ and performing the coordinate transformation again, the residual partial trace-free coefficient satisfies the first Bianchi identity and therefore comes from an algebraic Weyl tensor $\Weyl_\infty$ at infinity by Proposition~\ref{prop:Wtilde-eq}. At this stage, the leading coefficient $A$ is transformed into $A^\sharp$, which decomposes into a mass part and a Weyl-type part, both of which are preferred data:
	\begin{align*}
		A^\sharp
		=
		\Xi\bigl(9|\Gamma|\,m_{\ADM}(g)\bigr)
		+
		s(\Weyl_\infty).
	\end{align*}
	This is the main mechanism behind Theorem~\ref{thm:main1}. The uniqueness of the
	preferred data is then obtained by comparing two such elimination normalizations; the
	transition map is asymptotically orthogonal, so only the natural $\mathrm O(4)$--action remains.
	
	It is evident that the key operation in carrying out the first idea is the
	coordinate transformation \eqref{eq:coordinate-transformation-introduction}. The
	coordinate-change technique used here is adapted from the work of Wang--Yin on
	Ricci-flat ALE ends \cite{WY}. Their argument combines suitable decaying
	coordinate transformations with algebraic decompositions of the leading
	coefficient to obtain a Weyl-type normal form. We use this strategy in the present scalar-flat ALE setting, where the normalization problem has an additional
	feature: besides the Weyl-type component, a scalar component remains, and this
	remaining scalar component is identified with the ADM mass.
	
	The second idea enters when the ALE space is scalar-flat K\"ahler and the end is
	the holomorphic quotient end coming from the minimal resolution
	$\pi:X\to \mathbb C^2/\Gamma$. In this case, we carry out the same computational procedure as in the proof of Theorem~\ref{thm:main1}, but now keeping track of the K\"ahler structure. The $|x|^{-2}$ coefficient in the modified metric expansion can then
	be computed more explicitly. Its scalar part is exactly the ADM mass term
	already isolated in Theorem~\ref{thm:main1}; after this mass contribution is
	removed, the remaining preferred component must be the Weyl-type part.  Of course, it is not difficult to see that this residual Weyl tensor is determined by the limiting complex structure $J_\infty$ on $\mathbb{R}^4$.  Thus, in preferred coordinates
	coming from the holomorphic ALE end, one obtains
	\begin{align*}
		\Weyl_\infty^{(x)}(g)
		=
		C_W\,\mathcal W_{J_\infty^{(x)}} ,
	\end{align*}
	where the coefficient $C_W$ is, at this stage, an explicit universal multiple of
	the ADM mass.
	
	The next task is to interpret this ADM mass geometrically.  Here we use the mass formula of
	Hein--LeBrun \cite{HL}: for scalar-flat K\"ahler ALE surfaces, the ADM mass is
	expressed in terms of the pairing between the first Chern class of the complex
	surface and the K\"ahler class.  Since our end comes from the minimal resolution
	of the quotient singularity, this pairing can be rewritten on the
	exceptional set by using the discrepancy of the resolution. This gives
	\begin{align*}
		C_W
		=
		-\frac{|\Gamma|}{\pi}\sum_i\mu_iA_i ,
	\end{align*}
	where the $\mu_i$ are the discrepancy coefficients and $A_i=\int_{E_i}\omega$ are the K\"ahler areas of the exceptional curves. Meanwhile, the non-crepant property of the minimal resolution can also be characterized by a linear combination of the exceptional divisors, with the corresponding coefficients being precisely $\mu_i$.
	
	The proof of Theorem~\ref{thm:main3} now becomes straightforward, simply by observing that the discrepancy coefficients $\mu_i$ are either all zero or all strictly positive.
	
	\subsection{Organization of the paper}
	
	The paper is organized as follows.  In Section~\ref{sec:gauge}, we construct
	the Euclidean Bianchi gauge on an ALE end and derive the fourth-order elliptic
	equation governing the metric perturbation. This gives the rough homogeneous $|x|^{-2}$ expansion.  In Section~\ref{sec:proof-main-leading}, we analyze the
	remaining coordinate freedom, separate the mass part from the Weyl-type part,
	introduce the Schwarzschild relative gauge, and prove the uniqueness of the
	preferred data; the Burns metric is also computed as an explicit example.
	Finally, in Section~\ref{sec:Kahler-ALE-minimal-resolution}, we specialize the
	construction to scalar-flat K\"ahler ALE metrics on minimal quotient
	resolutions and prove the crepancy criterion for the vanishing of the Weyl tensor at infinity.
	
	\subsection{Acknowledgments}
	
	I am thankful for discussions with my supervisor Yin Hao.
	
	\section{Asymptotic Expansion of the Metric in the Bianchi Gauge}\label{sec:gauge}
	
	In this section, we carry out the first analytic step toward understanding the metric at infinity.  
	Starting from the baseline ALE chart given by Theorem~\ref{thm:baseline-ALE}, we seek a better coordinate system in which the metric perturbation $h=g-g_E$ satisfies a Bianchi-type gauge condition.  
	This gauge is introduced to control the remaining coordinate freedom and to
	put the relevant curvature equations derived from Assumption~\ref{ass:TVbaseline} into forms adapted to elliptic analysis
	on the exterior region. With this preparation, the equation for $h$ becomes a fourth-order system whose principal part is the Euclidean bi-Laplacian, this allows us to obtain a first asymptotic expansion of the metric.  
	The outcome is that the first nontrivial term in the expansion occurs at order $r^{-2}$; this is the first order at which the geometry at infinity will later be seen to carry nontrivial information.
	
	\subsection{Weighted function spaces}\label{sec:weighted}

	Before commencing the formal analysis, we first systematically outline the analytical framework of the entire study. This subsection imports the scale-invariant weighted H\"older spaces used in \cite{WY} and records a small
	toolkit: differentiation, multiplication, composition under small diffeomorphisms, and a bounded right-inverse
	for the Euclidean Laplacian on an exterior domain. These are the crucial analytic inputs needed later.

	We first introduce the weighted H\"older spaces on an exterior domain.
	Fix $R>0$ and set $\Omega_R:=\R^4\setminus B_R$.
	For $r>0$ define the scaling map on annuli
	\begin{align*}
		S_r:\ B_2\setminus B_1 \to B_{2r}\setminus B_r,
		\qquad
		S_r(x)=r x.
	\end{align*}
	For a function $v$ defined on $\Omega_R$, and for $\alpha>4$ and $\beta>0$ with $\beta\notin\N$,
	define
	\begin{align}\label{eq:Xab-def}
		\|v\|_{\mathcal X_{\alpha,\beta}(\Omega_R)}
		:=
		\sup_{r\ge R} r^\beta \|v\circ S_r\|_{C^\alpha(B_2\setminus B_1)}.
	\end{align}
	We denote by $\mathcal X_{\alpha,\beta}(\Omega_R)$ the Banach space of functions with finite norm
	\eqref{eq:Xab-def}. If there is no ambiguity, we write $\mathcal X_{\alpha,\beta}$.
	
	\begin{remark}[Quotients by a finite group]
		When working on an ALE end $(\R^4\setminus B_R)/\Gamma$, we define $\mathcal X_{\alpha,\beta}(\Omega_R/\Gamma)$
		by lifting $\Gamma$-invariant functions to $\Omega_R$ and using \eqref{eq:Xab-def}. Since $\Gamma$ is finite,
		this is equivalent (up to constants depending only on $\Gamma$) to defining the norm using local charts on the quotient.
	\end{remark}
	
	Then we collect some calculus rules used repeatedly: derivatives increase weight by $1$,
	products add weights, and composition with small decaying diffeomorphisms preserves weights.
	
	\begin{lemma}[Derivative estimate]\label{lem:X-derivative}
		For each $i\in\{1,\dots,4\}$, there exists $C=C(\alpha)$ such that
		\begin{align*}
			\|\del_{x_i} v\|_{\mathcal X_{\alpha-1,\beta+1}(\Omega_R)}
			\le
			C\,\|v\|_{\mathcal X_{\alpha,\beta}(\Omega_R)}.
		\end{align*}
	\end{lemma}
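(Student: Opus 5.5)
The plan is to reduce the estimate to a single annulus by scaling, and then use the fact that differentiation is bounded from $C^\alpha$ to $C^{\alpha-1}$ on a fixed bounded domain. Fix $r\ge R$. By the chain rule, $(\del_{x_i}v)\circ S_r = r^{-1}\,\del_{i}(v\circ S_r)$ on $B_2\setminus B_1$, since $S_r(x)=rx$ and hence $\del_{i}(v\circ S_r)(x)=r\,(\del_{x_i}v)(rx)$. Consequently
\begin{align*}
	r^{\beta+1}\,\|(\del_{x_i}v)\circ S_r\|_{C^{\alpha-1}(B_2\setminus B_1)}
	=
	r^{\beta}\,\|\del_i(v\circ S_r)\|_{C^{\alpha-1}(B_2\setminus B_1)} .
\end{align*}

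Next I would invoke the elementary fact that on the fixed annulus $B_2\setminus B_1$ one has $\|\del_i w\|_{C^{\alpha-1}}\le \|w\|_{C^\alpha}$ for every $w\in C^\alpha(B_2\setminus B_1)$. This is immediate from the definition of the H\"older norm: it is a sum of sup norms of derivatives up to order $\lfloor\alpha\rfloor$ plus the H\"older seminorm of the top derivatives, and the derivatives of $\del_i w$ of order $\le \lfloor\alpha\rfloor-1$ are among those of $w$, with the same fractional exponent. In particular the constant can be taken to be $1$, or some $C(\alpha)$ depending on the norm convention. Since $\alpha>4$, we have $\alpha-1>3>0$ and $\alpha-1\notin\N$ whenever $\alpha\notin\N$, so the target space is of the same type. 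If $\alpha$ is allowed to be an integer, the same argument still works with the corresponding $C^k$ norm.

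Combining the two displays, I obtain $r^{\beta+1}\|(\del_{x_i}v)\circ S_r\|_{C^{\alpha-1}}\le C\,r^\beta\|v\circ S_r\|_{C^\alpha}\le C\|v\|_{\mathcal X_{\alpha,\beta}}$, and taking the supremum over $r\ge R$ gives the claim. I expect no real obstacle here. The only point requiring care is the bookkeeping of the scaling factor $r^{-1}$, which is exactly what shifts the weight from $\beta$ to $\beta+1$. A secondary point is that the weight $\beta+1$ may be an integer, while the definition of $\mathcal X_{\alpha,\beta}$ formally excludes integer weights. I would remark that the norm \eqref{eq:Xab-def} makes sense for every real weight, and that the restriction $\beta\notin\N$ only matters for the later inverse estimates for the Laplacian, not for this lemma.
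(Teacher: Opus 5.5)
Your scaling argument is correct and is the standard one. The paper gives no proof of its own: it imports this lemma from Wang--Yin \cite{WY}, where it follows from exactly your identity $(\del_{x_i}v)\circ S_r=r^{-1}\del_i(v\circ S_r)$ together with the trivial bound $\|\del_i w\|_{C^{\alpha-1}}\le\|w\|_{C^{\alpha}}$ on the fixed annulus.

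One minor correction: your worry about the weight is moot, since $\beta>0$ with $\beta\notin\N$ forces $\beta+1\notin\N$. The only formal mismatch with the paper's stated ranges is that $\alpha-1$ may fall below $4$. That is harmless, because the norm \eqref{eq:Xab-def} makes sense for any $\alpha>0$.
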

	
	\begin{lemma}[Product estimate]\label{lem:X-product}
		Let $f\in\mathcal X_{\alpha_1,\beta_1}(\Omega_R)$ and $g\in\mathcal X_{\alpha_2,\beta_2}(\Omega_R)$.
		Then
		\begin{align*}
			\|fg\|_{\mathcal X_{\min(\alpha_1,\alpha_2),\beta_1+\beta_2}(\Omega_R)}
			\le
			C\,\|f\|_{\mathcal X_{\alpha_1,\beta_1}(\Omega_R)}\,\|g\|_{\mathcal X_{\alpha_2,\beta_2}(\Omega_R)}.
		\end{align*}
	\end{lemma}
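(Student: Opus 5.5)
The strategy is to reduce the estimate to the annulus $B_2\setminus B_1$ through the scaling maps $S_r$, since the weighted norm \eqref{eq:Xab-def} is a supremum of rescaled H\"older norms there. The key identity is that for any $r\ge R$,
\begin{align*}
	(fg)\circ S_r=(f\circ S_r)\,(g\circ S_r)
\end{align*}
on $B_2\setminus B_1$, because composition with $S_r$ is a ring homomorphism on functions.

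First I will apply the standard H\"older product inequality on the fixed bounded domain $B_2\setminus B_1$. Writing $\alpha:=\min(\alpha_1,\alpha_2)$ and using the embedding $C^{\alpha_j}\hookrightarrow C^{\alpha}$ with norm at most $1$, we get
\begin{align*}
	\|(f\circ S_r)(g\circ S_r)\|_{C^{\alpha}(B_2\setminus B_1)}
	\le
	C(\alpha)\,\|f\circ S_r\|_{C^{\alpha_1}(B_2\setminus B_1)}\,\|g\circ S_r\|_{C^{\alpha_2}(B_2\setminus B_1)}.
\end{align*}
The product inequality itself follows from the Leibniz rule for derivatives up to order $\lfloor\alpha\rfloor$, together with the elementary bound $[uv]_{\gamma}\le [u]_\gamma\|v\|_{C^0}+\|u\|_{C^0}[v]_\gamma$ for the top H\"older seminorms. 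The constant depends only on $\alpha$ and on the fixed annulus.

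Second, I multiply this inequality by $r^{\beta_1+\beta_2}=r^{\beta_1}r^{\beta_2}$ and take the supremum over $r\ge R$. The right-hand side is bounded by
\begin{align*}
	C\,\Bigl(\sup_{r\ge R} r^{\beta_1}\|f\circ S_r\|_{C^{\alpha_1}}\Bigr)\Bigl(\sup_{r\ge R} r^{\beta_2}\|g\circ S_r\|_{C^{\alpha_2}}\Bigr),
\end{align*}
which is the claimed bound $C\,\|f\|_{\mathcal X_{\alpha_1,\beta_1}}\|g\|_{\mathcal X_{\alpha_2,\beta_2}}$.

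The only point needing care is bookkeeping rather than an obstacle. If $\beta_1+\beta_2\in\N$, the target space is excluded by the convention $\beta\notin\N$ in the definition. The norm \eqref{eq:Xab-def} still makes sense verbatim for such $\beta$, so I will read the statement with that extended definition; the non-integrality condition matters only for the Laplacian inverse, not here. Likewise, if $\alpha$ is an integer, the H\"older space $C^\alpha$ is interpreted as $C^{\alpha}$ in the usual $C^k$ sense, and the Leibniz argument is unchanged.
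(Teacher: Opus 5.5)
Your proof is correct and is the standard rescaling argument behind this lemma: the paper gives no proof of its own and imports the estimate from Wang--Yin~\cite{WY}, where it follows from $(fg)\circ S_r=(f\circ S_r)(g\circ S_r)$, the H\"older product inequality on the fixed annulus, and the splitting $r^{\beta_1+\beta_2}=r^{\beta_1}r^{\beta_2}$ before taking suprema. One small inaccuracy, harmlessly absorbed into $C$: the embedding $C^{\alpha_j}(B_2\setminus B_1)\hookrightarrow C^{\alpha}(B_2\setminus B_1)$ need not have norm at most $1$, and the same goes for the bounds on the lower-order seminorms $[D^iu]_\gamma$, $i<\lfloor\alpha\rfloor$, in your Leibniz step, since these constants depend on the diameter and quasi-convexity of the annulus.
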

	
	\begin{lemma}[Composition under a small decaying diffeomorphism]\label{lem:X-composition}
		Let $f$ be defined on $\Omega_{R/2}$ and satisfy $f=\mathcal O(r^{-\ell})$ for some $\ell>0$.
		Let $u:\Omega_R\to\R^4$ and set $\varphi(x)=x+u(x)$.
		If $\|u\|_{\mathcal X_{\alpha,\gamma}(\Omega_R)}\le 1$ for some $\gamma>0$, then for all sufficiently large $R$,
		\begin{align*}
			\|f\circ\varphi\|_{\mathcal X_{\alpha,\ell}(\Omega_R)}\le C.
		\end{align*}
	\end{lemma}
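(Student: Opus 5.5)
The plan is to reduce the estimate to annulus-by-annulus Hölder estimates on a fixed model annulus, using the scale invariance built into \eqref{eq:Xab-def}. Fix $r\ge R$. We must bound
\begin{align*}
	r^\ell\,\|(f\circ\varphi)\circ S_r\|_{C^\alpha(B_2\setminus B_1)}
	=
	r^\ell\,\|f\circ\varphi\circ S_r\|_{C^\alpha(B_2\setminus B_1)} .
\end{align*}
First rewrite $\varphi\circ S_r = S_r\circ\varphi_r$, where $\varphi_r(y):=y+r^{-1}u(ry)=y+u_r(y)$ on $B_2\setminus B_1$, with $u_r:=r^{-1}\,u\circ S_r$. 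The hypothesis $\|u\|_{\mathcal X_{\alpha,\gamma}}\le 1$ gives $\|u\circ S_r\|_{C^\alpha}\le r^{-\gamma}$, hence $\|u_r\|_{C^\alpha(B_2\setminus B_1)}\le r^{-1-\gamma}\le R^{-1-\gamma}$. So for $R$ large, $\varphi_r$ is $C^\alpha$-close to the identity, uniformly in $r$, and maps $B_2\setminus B_1$ into the slightly larger annulus $B_{3}\setminus B_{1/2}$. This is where $f$ being defined on $\Omega_{R/2}$ is used: $S_r(B_3\setminus B_{1/2})\subset\Omega_{R/2}$.

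Second, set $f_r:=r^\ell\, f\circ S_r$ on $B_3\setminus B_{1/2}$. Here I interpret $f=\mathcal O(r^{-\ell})$ in the $\mathcal O_\infty$ sense used throughout the paper, i.e.\ $\partial^\beta f=O(|x|^{-\ell-|\beta|})$ for all multi-indices, at least up to order $\lceil\alpha\rceil$. Scaling gives $\partial^\beta f_r(y)=r^{\ell+|\beta|}(\partial^\beta f)(ry)$, which is bounded by $C|y|^{-\ell-|\beta|}\le C'$ on the enlarged annulus. Thus $\|f_r\|_{C^{\lceil\alpha\rceil}(B_3\setminus B_{1/2})}\le C$ uniformly in $r\ge R$, and
\begin{align*}
	r^\ell\, f\circ\varphi\circ S_r = f_r\circ\varphi_r .
\end{align*}

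Third, apply the standard composition estimate on a fixed domain. For $F\in C^{\lceil\alpha\rceil}$ and $\psi\in C^\alpha$ with $\|\psi\|_{C^\alpha}$ bounded, one has
\begin{align*}
	\|F\circ\psi\|_{C^\alpha}\le C\bigl(\|F\|_{C^{\lceil\alpha\rceil}},\|\psi\|_{C^\alpha}\bigr).
\end{align*}
This follows from Faà di Bruno for the integer derivatives up to $\lfloor\alpha\rfloor$, and from the product and Hölder-seminorm estimates for the fractional part; the Lipschitz bound on $D^{\lfloor\alpha\rfloor}F$ handles the Hölder quotient of $(D^kF)\circ\psi$. Since $\|\varphi_r\|_{C^\alpha}\le C$ uniformly, taking the supremum over $r\ge R$ yields $\|f\circ\varphi\|_{\mathcal X_{\alpha,\ell}}\le C$.

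The main obstacle is bookkeeping rather than depth. There are two points to watch. The first is ensuring the image $\varphi_r(B_2\setminus B_1)$ stays in a fixed compact annulus, which needs $R$ large; this is exactly the ``sufficiently large $R$'' clause. The second is the regularity assumed of $f$: if only $C^0$ decay were meant, the statement would fail, so I will read $f=\mathcal O(r^{-\ell})$ as $\mathcal O_\infty(r^{-\ell})$, consistent with its later uses on metric coefficients.
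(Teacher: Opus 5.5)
Your argument is correct. The paper gives no proof of this lemma and cites it as a standard fact from Wang--Yin. Your rescaling to a fixed annulus is the standard proof being invoked: you write $\varphi\circ S_r=S_r\circ\varphi_r$ with $\|\varphi_r-\mathrm{id}\|_{C^\alpha}\le r^{-1-\gamma}$, bound $f_r=r^\ell f\circ S_r$ uniformly in $C^{\lceil\alpha\rceil}$ on $B_3\setminus B_{1/2}$, and then apply a fixed-domain H\"older composition estimate. Reading $f=\mathcal O(r^{-\ell})$ as $\mathcal O_\infty(r^{-\ell})$ is also right and necessary, since pointwise decay alone cannot control the $C^\alpha$ part of the $\mathcal X_{\alpha,\ell}$ norm.
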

	
	\begin{remark}
		Lemmas~\ref{lem:X-derivative}--\ref{lem:X-composition} are standard results from \cite{WY} and will be used tacitly. 
		In the sequel, we also assume the following elementary fact without further mention: 
		if $T = \mathcal{O}_\infty(r^{-\beta})$, then $T \in \mathcal{X}_{\alpha,\beta}$ for any $\alpha$, and differentiation improves the weight by one order.
	\end{remark}

	Having established the definition of the basic weighted norms and their algebraic properties, 
	we now state the key analytic input: the existence of a bounded right inverse for $\Delta_E$ on these weighted spaces. 
	This operator, originally constructed in \cite{WY}, will be employed as a black box.
	
	\begin{theorem}\label{thm:YW-poisson}
		There exists a bounded linear map
		\begin{align*}
			\Psi:\ \mathcal X_{\alpha-2,\beta+2}(\Omega_R)\to \mathcal X_{\alpha,\beta}(\Omega_R)
		\end{align*}
		such that for all $f\in \mathcal X_{\alpha-2,\beta+2}(\Omega_R)$,
		\begin{align*}
			\DeltaE\,\Psi(f)=f
			\qquad\text{on }\Omega_R,
		\end{align*}
		and
		\begin{align*}
			\|\Psi(f)\|_{\mathcal X_{\alpha,\beta}(\Omega_R)}
			\le
			C\,\|f\|_{\mathcal X_{\alpha-2,\beta+2}(\Omega_R)},
		\end{align*}
		where $C$ is independent of $R$.
	\end{theorem}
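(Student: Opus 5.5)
The plan is to build $\Psi$ explicitly from the Newtonian potential of $\R^4$, corrected by finitely many harmonic multipole terms, and then upgrade a pointwise weighted bound to the full $\mathcal X_{\alpha,\beta}$ bound by scaled Schauder estimates on dyadic annuli. Two preliminary reductions come first.

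\emph{Reduction to $R=1$.} The norm \eqref{eq:Xab-def} is scale-invariant: if $f_R(x):=R^{\beta+2}f(Rx)$, then
\begin{align*}
\|f_R\|_{\mathcal X_{\alpha-2,\beta+2}(\Omega_1)}=\|f\|_{\mathcal X_{\alpha-2,\beta+2}(\Omega_R)},
\end{align*}
and $\DeltaE$ is homogeneous of degree $-2$. Hence it suffices to construct $\Psi$ on $\Omega_1$ and set $\Psi_R(f)(x):=R^{-\beta}\,\Psi_1(f_R)(x/R)$. This automatically makes the constant independent of $R$.

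\emph{Extension of $f$.} Next I fix a bounded linear extension operator $E:\mathcal X_{\alpha-2,\beta+2}(\Omega_1)\to\mathcal X_{\alpha-2,\beta+2}(\Omega_{1/2})$. I would take a Seeley-type reflection across $|x|=1$, which only modifies $f$ on $B_2\setminus B_{1/2}$. I then multiply by a cutoff $\chi$ with $\chi=1$ on $\Omega_1$ and $\chi=0$ on $B_{3/4}$. The resulting function $\tilde f=\chi Ef$ is globally defined on $\R^4$, has compact support in the radial direction near the origin, and satisfies $|\tilde f(y)|\le C\|f\|\,(1+|y|)^{-\beta-2}$.

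\emph{Construction of the solution.} Let $G(x,y)=-\frac{1}{4\pi^2}|x-y|^{-2}$ and $Nf(x)=\int G(x,y)\tilde f(y)\,dy$. The integral converges since $\beta+4>4$.
\begin{itemize}
\item If $0<\beta<2$, I set $\Psi(f):=Nf|_{\Omega_1}$.
\item If $\beta>2$, I Taylor expand $G(x,y)$ in $y$ about $y=0$ in the region $|y|<|x|/2$. The terms of order $k$ with $2+k<\beta$ have the form $c_\gamma\,\partial^\gamma\bigl(|x|^{-2}\bigr)$, with moments $c_\gamma=\int y^\gamma\tilde f$. These moments converge because $k<\beta-2$. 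I then set
\begin{align*}
\Psi(f):=Nf-\sum_{|\gamma|<\beta-2}c_\gamma\,\partial^\gamma|x|^{-2}.
\end{align*}
The subtracted terms are harmonic on $\Omega_1$, so $\DeltaE\Psi(f)=\tilde f=f$ there, and $\Psi$ is linear.
\end{itemize}

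\emph{Pointwise weighted bound.} The key estimate is $|\Psi(f)(x)|\le C\|f\|\,|x|^{-\beta}$. I would prove it by splitting the integral into three regions:
\begin{itemize}
\item \emph{Near field}, $|y-x|<|x|/2$: here $|\tilde f|\lesssim|x|^{-\beta-2}$, and integrating $|x-y|^{-2}$ over a ball of radius $|x|/2$ gives $|x|^{-\beta}$.
\item \emph{Far field}, $|y|>2|x|$: here $|G|\lesssim|y|^{-2}$, giving $\int_{|y|>2|x|}|y|^{-\beta-4}\,dy\lesssim|x|^{-\beta}$. When multipoles were subtracted, I instead use the Taylor remainder bound for the subtracted polynomial part, combined with the moment tails $\int_{|y|>|x|/2}|y|^{k}|\tilde f|\lesssim|x|^{k+2-\beta}$ for $k>\beta-2$.
\item \emph{Inner region}, $|y|<|x|/2$: the Taylor remainder of order $k_0=\lceil\beta-2\rceil$ is bounded by $|y|^{k_0}|x|^{-2-k_0}$. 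Since $k_0>\beta-2$, integrating against $|\tilde f|$ gives $|x|^{-2-k_0}\cdot|x|^{k_0+2-\beta}$.
\end{itemize}
The assumption $\beta\notin\N$ is exactly what excludes the logarithmic borderline case $k_0=\beta-2$.

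\emph{Hölder upgrade.} On each annulus $B_{2r}\setminus B_r$ with $r\ge1$, I apply interior Schauder estimates to the rescaled function $v_r:=\Psi(f)\circ S_r$ on the slightly larger annulus $B_4\setminus B_{1/2}$. This works because $\tilde f$ is defined on $\Omega_{1/2}$, so the interior estimate is available even at $r=1$. The rescaled function satisfies $\DeltaE v_r=r^2(\tilde f\circ S_r)$, and the Schauder estimate gives
\begin{align*}
\|v_r\|_{C^\alpha}\le C\bigl(\|v_r\|_{C^0}+r^2\|\tilde f\circ S_r\|_{C^{\alpha-2}}\bigr)\le C r^{-\beta}\|f\|.
\end{align*}
Taking the supremum over $r$ yields the $\mathcal X_{\alpha,\beta}$ bound.

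\emph{Main obstacle.} I expect the hard step to be the pointwise decay estimate in the multipole-corrected case $\beta>2$. The region splitting and the choice of truncation order must be arranged so that all three pieces decay exactly like $|x|^{-\beta}$, and this is where non-integrality of $\beta$ is used. The extension near $|x|=1$ is routine but is needed to make the constant uniform up to the boundary.
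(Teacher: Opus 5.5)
The paper gives no proof of Theorem~\ref{thm:YW-poisson}. It imports the result from Wang--Yin \cite{WY} as a black box and only remarks on the admissible range $\alpha>2$, $\beta>0$, $\beta\notin\N$. Your proposal is a correct, self-contained substitute along the standard route:
\begin{itemize}
\item reduce to $R=1$ using the scale invariance of the norm \eqref{eq:Xab-def}, which is exactly what makes $C$ independent of $R$;
\item extend $f$ and cut off near the origin;
\item take the Newtonian potential and subtract the finitely many multipoles $\partial^\gamma|x|^{-2}$ with $|\gamma|<\beta-2$;
\item prove a pointwise $|x|^{-\beta}$ bound;
\item upgrade it by scaled interior Schauder estimates.
\end{itemize}

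Compared with the citation, your construction buys three things.
\begin{itemize}
\item It shows where non-integrality really enters: only $\beta-2\in\{0,1,2,\dots\}$ is an obstruction. These are the decay rates $r^{-2-\ell}$ of exterior harmonic functions; for instance, $\DeltaE u=r^{-4}$ forces $u=-\tfrac12 r^{-2}\log r+{}$harmonic. So $\beta=1$ is harmless.
\item Taking the extension and cutoff radial, your $\Psi$ commutes with $\mathrm O(4)$. The degree-$k$ Taylor polynomial of $G(x,y)$ in $y$ is invariant under simultaneous rotation of $x$ and $y$. Hence $\Psi$ preserves $\Gamma$-invariance and equivariance, which is what one wants on $\Omega_R/\Gamma$ and in the gauge-fixing of Section~\ref{sec:gauge}; with a black-box inverse one would have to average over $\Gamma$.
\item It gives an explicit formula for $\Psi$.
\end{itemize}
The citation, in exchange, is short and takes the Wang--Yin framework wholesale.

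A few points need tidying, none of them a real gap.
\begin{itemize}
\item In the far-field bullet, the moment-tail bound $\int_{|y|>|x|/2}|y|^{j}|\tilde f|\lesssim|x|^{j+2-\beta}$ holds for the subtracted orders $j<\beta-2$, not for $k>\beta-2$; for $j>\beta-2$ the tail diverges.
\item No Taylor remainder bound is available for $|y|>|x|$, since the expansion of $|x-y|^{-2}$ about $y=0$ diverges there. The clean formulation is to write $\Psi(f)(x)=\int (G-T_k)(x,y)\,\tilde f(y)\,dy$, with $T_k$ the degree-$k=\lfloor\beta-2\rfloor$ Taylor polynomial. Then use the remainder bound on $|y|<|x|/2$, and bound $G$ and $T_k$ separately on $|y|\ge|x|/2$. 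For $\beta<2$ simply take $T_k=0$; note that your formula $k_0=\lceil\beta-2\rceil$ gives $-1$ when $\beta<1$.
\item Your three regions omit $\{|x|/2\le|y|\le2|x|,\ |x-y|\ge|x|/2\}$. It contributes at most $|x|^{-2}\cdot|x|^{-\beta-2}\cdot|x|^{4}$, so it is harmless but should be listed.
\item The pointwise bound must hold for $|x|\ge 1/2$ to feed the Schauder step at $r=1$. It does, because $\tilde f=0$ on $B_{3/4}$.
\item The Schauder step needs $\alpha\notin\N$, as the paper assumes elsewhere.
\item The $C^{\alpha-2}$ norm of $\tilde f\circ S_r$ on $B_4\setminus B_{1/2}$ is controlled by the $\mathcal X$-norm because the supremum in \eqref{eq:Xab-def} runs over all real $r$. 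Hence finitely many overlapping annuli suffice.
\end{itemize}
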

	
	\begin{remark}
		This theorem imposes less restrictive conditions on the parameters $\alpha$ and $\beta$. Specifically, $\alpha > 2$, $\beta > 0$, and $\beta$ is non-integer.
	\end{remark}
	
	\medskip
	
	To address the challenges associated with fourth-order equations in our study, we require a corresponding result concerning the inverse mapping of $\Delta_E^2$. In fact, Theorem~\ref{thm:YW-poisson} stated above admits the following direct corollary:
	
	\begin{corollary}\label{cor:YW-bipoisson}
		Define $\Psi^{(2)}:=\Psi\circ\Psi$. Then $\Psi^{(2)}$ is a bounded linear map
		\begin{align*}
			\Psi^{(2)}:\ \mathcal X_{\alpha-4,\beta+4}(\Omega_R)\to \mathcal X_{\alpha,\beta}(\Omega_R)
		\end{align*}
		such that
		\begin{align*}
			\DeltaE^2\,\Psi^{(2)}(f)=f
			\qquad\text{on }\Omega_R,
		\end{align*}
		and
		\begin{align*}
			\|\Psi^{(2)}(f)\|_{\mathcal X_{\alpha,\beta}(\Omega_R)}
			\le
			C\,\|f\|_{\mathcal X_{\alpha-4,\beta+4}(\Omega_R)}.
		\end{align*}
	\end{corollary}

	\subsection{Construction of the Euclidean Bianchi gauge}\label{sec:bianchi}
	
	We now construct a change of coordinates under which the metric perturbation satisfies the Euclidean Bianchi gauge condition.
	Starting from the baseline ALE chart of Theorem~\ref{thm:baseline-ALE}, write
	\begin{align*}
		g=\gE+h_0
	\end{align*}
	on $\Omega_R$, where for every fixed $\tau\in(1,2)$ one has $h_0\in \mathcal X_{\alpha+1,\tau}(\Omega_R)$ after enlarging $R$ if necessary.
	
	Recall that for a symmetric $2$-tensor $k$ on $\Omega_R$,
	\begin{align*}
		\BE(k):=\divE k-\frac12 d(\trE k).
	\end{align*}
	Our goal is to find a small vector field $u\in \mathcal X_{\alpha+1,\tau-1}(\Omega_R)$ such that the map $\varphi(x)=x+u(x)$ is a diffeomorphism of $\Omega_R$ onto its image, and the pulled-back perturbation $h:=\varphi^*g-\gE$ satisfies the Euclidean Bianchi gauge condition
	\begin{align}\label{eq:target-Bianchi-gauge}
		\BE(h)=0.
	\end{align}

	The key point is that, after linearizing the gauge equation with respect to the coordinate correction $u$, the leading term is simply the Euclidean Laplacian $\DeltaE u$. Indeed, for a smooth vector field $X$,
	\begin{align*}
		\BE(\Lie_X\gE)_i
		&=
		\partial^j(\partial_iX_j+\partial_jX_i)
		-\frac12\partial_i\bigl(2\partial^jX_j\bigr)
		=
		\partial^j\partial_j X_i
		=
		(\DeltaE X)_i.
	\end{align*}
	Thus
	\begin{align*}
		\BE(\Lie_X\gE)=\DeltaE X.
	\end{align*}
	This identity is what makes Theorem~\ref{thm:YW-poisson} directly applicable.
	
	For $u\in \mathcal X_{\alpha+1,\tau-1}(\Omega_R)$ sufficiently small, set $\varphi=\Id+u$ and define
	\begin{align*}
		\mathscr F(u):=\BE(\varphi^*g-\gE).
	\end{align*}
	Our task is to employ the fixed-point principle in $\mathcal X_{\alpha+1,\tau-1}(\Omega_R)$ to find a vector field $u$ that vanishes under the map $\mathscr F$. This procedure is standard. 
	
	In fact, the structure of the gauge equation can be described as follows. For arbitrary fixed $\alpha\in(4,\infty)$ and $\tau\in(1,2)$, there exist $R$ sufficiently large and $\varepsilon>0$ sufficiently small such that for every $u\in \mathcal X_{\alpha+1,\tau-1}(\Omega_R),\|u\|_{\mathcal X_{\alpha+1,\tau-1}(\Omega_R)}<\varepsilon$, the map $\mathscr F(u)$ is well-defined on $\Omega_R$ and admits the expansion
	\begin{align}\label{eq:F-expansion}
		\mathscr F(u)
		=
		\BE(h_0)+\DeltaE u+\mathscr N(u,h_0),
	\end{align}
	where the nonlinear remainder satisfies
	\begin{align}\label{eq:N-bound}
		\|\mathscr N(u,h_0)\|_{\mathcal X_{\alpha-1,\tau+1}(\Omega_R)}
		\le
		C\Bigl(
		\|h_0\|_{\mathcal X_{\alpha+1,\tau}(\Omega_R)}
		\|u\|_{\mathcal X_{\alpha+1,\tau-1}(\Omega_R)}
		+
		\|u\|_{\mathcal X_{\alpha+1,\tau-1}(\Omega_R)}^2
		\Bigr).
	\end{align}
	Moreover, $\mathscr N(\cdot,h_0)$ is locally Lipschitz: for all sufficiently small
	$u_1,u_2\in \mathcal X_{\alpha+1,\tau-1}(\Omega_R)$,
	\begin{align}\label{eq:propN-lip}
		\begin{split}
			&\|\mathscr N(u_1,h_0)-\mathscr N(u_2,h_0)\|_{\mathcal X_{\alpha-1,\tau+1}(\Omega_R)}
			\\
			&\qquad \le
			C\Bigl(
			\|h_0\|_{\mathcal X_{\alpha+1,\tau}(\Omega_R)}
			+\|u_1\|_{\mathcal X_{\alpha+1,\tau-1}(\Omega_R)}
			+\|u_2\|_{\mathcal X_{\alpha+1,\tau-1}(\Omega_R)}
			\Bigr)
			\|u_1-u_2\|_{\mathcal X_{\alpha+1,\tau-1}(\Omega_R)}.
		\end{split}
	\end{align}
	
	With the gauge equation prepared, the existence of a Bianchi gauge follows from the fixed-point principle. 
	
	More precisely, according to the expansion \eqref{eq:F-expansion}, solving $\BE(\varphi^*g-\gE)=0$ on $\Omega_R$ is equivalent to finding a fixed point of
	\begin{align*}
		\mathcal T(u):=-\Psi\bigl(\BE(h_0)+\mathscr N(u,h_0)\bigr),
	\end{align*}
	where $\Psi$ is the bounded right inverse of $\DeltaE$ given by Theorem~\ref{thm:YW-poisson}. Thanks to the estimates \eqref{eq:N-bound} and \eqref{eq:propN-lip}, Banach's fixed-point theorem yields a unique
	$u\in \mathcal X_{\alpha+1,\tau-1}(\Omega_R)$ such that $u=\mathcal T(u)$. Applying $\DeltaE$ to the fixed-point identity, we derive a solution to the Bianchi gauge condition \eqref{eq:target-Bianchi-gauge}. To summarize,
	
	\begin{proposition}\label{prop:solve-bianchi}
		Fix $\tau\in(1,2)$ and a non-integer $\alpha\in(4,\infty)\setminus\N$.
		There exists $R$ sufficiently large and a vector field
		$u\in\mathcal X_{\alpha+1,\tau-1}(\Omega_R)$ such that $\varphi(x)=x+u(x)$
		is a diffeomorphism of $\Omega_R$ (onto its image) and satisfies
		\begin{align*}
			\BE(\varphi^*g-\gE)=0.
		\end{align*}
		Moreover,
		\begin{align*}
			\|u\|_{\mathcal X_{\alpha+1,\tau-1}(\Omega_R)}
			\le
			C\,\|h_0\|_{\mathcal X_{\alpha,\tau}(\Omega_R)},
		\end{align*}
		where $C$ depends only on $\alpha,\tau$ and the fixed baseline chart.
	\end{proposition}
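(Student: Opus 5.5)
The plan is to run a contraction-mapping argument in the ball of radius $\varepsilon$ in $\mathcal X_{\alpha+1,\tau-1}(\Omega_R)$, using the expansion \eqref{eq:F-expansion} together with the bounds \eqref{eq:N-bound} and \eqref{eq:propN-lip}. We write $X:=\mathcal X_{\alpha+1,\tau-1}(\Omega_R)$ and $Y:=\mathcal X_{\alpha-1,\tau+1}(\Omega_R)$.

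\textbf{Step 1 (the operator is well defined).} Since $\tau-1\in(0,1)$ is non-integer and $\alpha+1>2$, Theorem~\ref{thm:YW-poisson} applies with $\beta=\tau-1$. It gives a bounded map $\Psi:Y\to X$ whose norm is independent of $R$. By Lemma~\ref{lem:X-derivative}, $\BE(h_0)\in Y$ with
\begin{align*}
\|\BE(h_0)\|_{Y}\le C\|h_0\|_{\mathcal X_{\alpha,\tau}(\Omega_R)}.
\end{align*}
Hence $\mathcal T(u)=-\Psi\bigl(\BE(h_0)+\mathscr N(u,h_0)\bigr)$ maps the $\varepsilon$-ball of $X$ into $X$.

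\textbf{Step 2 (smallness of the data).} The key observation is that $\|h_0\|_{\mathcal X_{\alpha+1,\tau}(\Omega_R)}\to0$ as $R\to\infty$. To see this, fix $\tau'\in(\tau,2)$. Then $h_0=\mathcal O_\infty(r^{-\tau'})$ by Theorem~\ref{thm:baseline-ALE}, so
\begin{align*}
\sup_{r\ge R} r^{\tau}\,\|h_0\circ S_r\|_{C^{\alpha+1}}\le C R^{\tau-\tau'} .
\end{align*}
Choose $\varepsilon$ small, then $R$ large, so that
\begin{align*}
C\|h_0\|_{\mathcal X_{\alpha,\tau}}\le \varepsilon/2
\quad\text{and}\quad
C\bigl(\|h_0\|_{\mathcal X_{\alpha+1,\tau}}+2\varepsilon\bigr)\le 1/2 .
\end{align*}
Then \eqref{eq:N-bound} gives $\|\mathcal T(u)\|_X\le \varepsilon$, so $\mathcal T$ preserves the ball. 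Likewise \eqref{eq:propN-lip} gives
\begin{align*}
\|\mathcal T(u_1)-\mathcal T(u_2)\|_X\le \tfrac12\|u_1-u_2\|_X ,
\end{align*}
so $\mathcal T$ is a contraction. Banach's theorem produces a unique fixed point $u$. Applying $\DeltaE$ and using $\DeltaE\Psi=\Id$ yields $\DeltaE u=-\BE(h_0)-\mathscr N(u,h_0)$, which is exactly $\mathscr F(u)=0$ by \eqref{eq:F-expansion}.

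\textbf{Step 3 (the estimate on $u$).} From the fixed-point identity and \eqref{eq:N-bound},
\begin{align*}
\|u\|_X\le C\|h_0\|_{\mathcal X_{\alpha,\tau}}+C\bigl(\|h_0\|+\varepsilon\bigr)\|u\|_X .
\end{align*}
The last term is absorbed into the left-hand side, giving $\|u\|_X\le C\|h_0\|_{\mathcal X_{\alpha,\tau}}$.

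\textbf{Step 4 (diffeomorphism property).} The weight $\tau-1>0$ gives $|u|\le C\varepsilon r^{2-\tau}$ and $|Du|\le C\varepsilon r^{1-\tau}\le C\varepsilon$. Hence $\|D\varphi-\Id\|<1/2$ on $\Omega_R$. This makes $\varphi$ a local diffeomorphism. It is also injective, because $|\varphi(x)-\varphi(y)|\ge \tfrac12|x-y|$ along segments. For non-convex pairs in the exterior domain, we instead compare along a path of length at most $\pi|x-y|/2$ avoiding $B_R$, and shrink $\varepsilon$ accordingly. Therefore $\varphi$ is a diffeomorphism onto its image.

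\textbf{Main obstacle.} I expect the main obstacle to be justifying the expansion \eqref{eq:F-expansion} with the bounds \eqref{eq:N-bound} and \eqref{eq:propN-lip}. This requires writing
\begin{align*}
\varphi^*g-\gE=h_0\circ\varphi+\Lie_u\gE+(\text{terms quadratic in }Du\text{ or in }h_0\circ\varphi-h_0) .
\end{align*}
The difference $h_0\circ\varphi-h_0$ is handled by Lemma~\ref{lem:X-composition} together with the mean value theorem, which gives weight $\tau+(\tau-1)-(\tau-1)$ relative to $u$. Lemma~\ref{lem:X-product} then places each quadratic term in the correct weighted space. One must also track the loss of one derivative in the regularity index. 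If the paper takes these estimates as given, as it appears to, then the remaining argument is just Steps 1 to 4.
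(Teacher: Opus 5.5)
Your proposal is correct and follows the paper's own argument: a Banach fixed point for $\mathcal T(u)=-\Psi\bigl(\BE(h_0)+\mathscr N(u,h_0)\bigr)$ on a small ball of $\mathcal X_{\alpha+1,\tau-1}(\Omega_R)$. It uses the $R$-independent right inverse $\Psi$ of $\DeltaE$ and the bounds \eqref{eq:N-bound}, \eqref{eq:propN-lip}, which the paper also takes as given. Your added details fill in what the paper leaves implicit: smallness $\|h_0\|_{\mathcal X_{\alpha+1,\tau}(\Omega_R)}\le CR^{\tau-\tau'}$ via a rate $\tau'\in(\tau,2)$, the absorption argument for the estimate on $u$, and injectivity. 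One small correction in Step 4: the pointwise bounds are actually $|u|\le C\varepsilon r^{1-\tau}$ and $|Du|\le C\varepsilon r^{-\tau}$. These are stronger than what you wrote, so your argument still goes through.
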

	
	\medskip
	
	Proposition~\ref{prop:solve-bianchi} provides the first analytic normalization of the ALE end: after a small asymptotically trivial change of coordinates, the perturbation $h=g-\gE$ satisfies the Euclidean Bianchi condition. This is the correct starting point for the fourth-order analysis of the curvature equations derived from Assumption~\ref{ass:TVbaseline}. In the next subsection, we will use this gauge to derive the fourth-order
	equation satisfied by the metric perturbation on the exterior region.
	
	\subsection{The fourth-order equation in Euclidean Bianchi gauge}\label{subsec:new bach-flat equation}
	
	We now continue from the Euclidean Bianchi gauge constructed in the previous subsection. Our goal here is to show that the two alternatives in Assumption~\ref{ass:TVbaseline} lead, in these coordinates, to a fourth-order equation whose principal part is $\DeltaE^2 h$. This is the form required for the exterior biharmonic analysis in the subsequent discussion. We will use the schematic notation $T_1*\cdots*T_N$ for a finite linear combination of Euclidean contractions of the tensors $T_1,\dots,T_N$ with universal coefficients.
	
	Let $h:=\varphi^*\Phi^*g-\gE$ be the perturbation produced by Proposition~\ref{prop:solve-bianchi}, then $\BE(h)=0$ holds on $\Omega_R$. So in Euclidean coordinates,
	\begin{align}
		\partial^k h_{ik}-\frac12\,\partial_i(\trE h)=0.
		\label{eq:euclidean-bianchi-gauge-h}
	\end{align}
	We shall use \eqref{eq:euclidean-bianchi-gauge-h} repeatedly below. Moreover, $h$ is still a small decaying perturbation of $\gE$ on the exterior region. Notice that under Assumption~\ref{ass:TVbaseline}, we have $\Scal(g)=0$, and that either $g$ is Bach-flat or $g$ has harmonic curvature. 
	
	In the Bach-flat case, by the naturality of the Bach tensor, we conclude that $\Bach(\gE+h)$ vanishes on $\Omega_R$. For the four-dimensional formula
	\begin{align*}
		\Bach_{ij}
		=
		\nabla^k C_{kij}+P^{k\ell}W_{ikj\ell},
	\end{align*}
	where $W$ is the Weyl tensor, $P$ is the Schouten tensor, and $C$ is the Cotton tensor, linearizing the term $P^{k\ell}W_{ikj\ell}$ at $\gE$ and using the product rule gives
	\begin{align*}
		\bigl(D(P^{k\ell}W_{ikj\ell})\bigr)_{\gE}(h)
		=
		(DP)_{\gE}(h)^{k\ell}W(\gE)_{ikj\ell}
		+
		P(\gE)^{k\ell}(DW)_{\gE}(h)_{ikj\ell}=0,
	\end{align*}
	since $\gE$ is flat. Therefore, in the Bach-flat case, the equation
	$\Bach(\gE+h)=0$ can be written in the form
	\begin{align}\label{eq:Bach-flat-equation}
		\nabla^k C_{kij}
		=
		-
		P^{k\ell}W_{ikj\ell},
	\end{align}
	and the right-hand side has no linear term at $\gE$. Thus it is absorbed into
	the nonlinear remainder when we extract the linearized fourth-order operator.
	
	In the harmonic-curvature case, $\divergence\Rm=0$ is equivalent to $\nabla_k\Ric_{ij}=\nabla_j\Ric_{ik}$. Since $\Scal(g)=0$, the Schouten tensor is $P=\frac12\Ric$, and hence the Cotton tensor $C_{kij}$, defined as $\nabla_kP_{ij}-\nabla_jP_{ik}$, vanishes. In particular, $\nabla^kC_{kij}=0$ on $\Omega_R$. Thus in both cases we may derive the perturbed bi-harmonic equation for $h$ via linearizing the tensor $\nabla^kC_{kij}$ to obtain the fourth-order linear principal part.
	
	Under the gauge condition \eqref{eq:euclidean-bianchi-gauge-h}, the standard linearized curvature formulas reduce to
	\begin{align*}
		(\delta\Ric)_{ij}=-\frac12\,\DeltaE h_{ij},
		\quad
		\delta\Scal=-\frac12\,\DeltaE(\trE h).
	\end{align*}
	Hence the linearized Schouten tensor is
	\begin{align}\label{eq:linearized-schouten}
		(\delta P)_{ij}
		=
		\frac12\Bigl((\delta\Ric)_{ij}-\frac16\,\delta\Scal\,\delta_{ij}\Bigr)
		=
		-\frac14\,\DeltaE h_{ij}
		+
		\frac1{24}\,\delta_{ij}\,\DeltaE(\trE h).
	\end{align}
	To pass from $\delta P$ to the linearized Cotton-divergence term, we linearize the Cotton tensor. Since $P(\gE)=0$, all connection-variation terms are multiplied by the
	background Schouten tensor and hence vanish. Therefore
	$(\delta C)_{kij}$ reduces to
	$\partial_k(\delta P)_{ij}-\partial_j(\delta P)_{ik}$.
	Since also $C(\gE)=0$, linearizing the divergence gives
	\begin{align}\label{eq:linearized-bach}
		\bigl(D(\nabla^kC_{kij})\bigr)_{\gE}(h)
		=
		\partial^k(\delta C)_{kij}
		=
		\DeltaE(\delta P)_{ij}-\partial_j\bigl(\partial^k(\delta P)_{ik}\bigr).
	\end{align}
	As for the divergence of $\delta P$, combining formula \eqref{eq:linearized-schouten} for $(\delta P)_{ij}$ with the Bianchi identity \eqref{eq:euclidean-bianchi-gauge-h} yields
	\begin{align*}
		\partial^k(\delta P)_{ik}
		=
		-\frac1{12}\,\partial_i\DeltaE(\trE h),
	\end{align*}
	substituting this into the preceding identity \eqref{eq:linearized-bach} yields
	\begin{align}
		\bigl(D(\nabla^kC_{kij})\bigr)_{\gE}(h)
		=
		-\frac14\,\DeltaE^2 h_{ij}
		+
		\frac1{24}\,\delta_{ij}\,\DeltaE^2(\trE h)
		+
		\frac1{12}\,\partial_i\partial_j\DeltaE(\trE h).
		\label{eq:DBach-explicit-bianchi-short}
	\end{align}
	
	To handle the trace terms, we use the scalar-flat condition. Since $\Scal(\gE+h)$ vanishes and the linearization of the scalar curvature in Bianchi gauge is $\delta\Scal=-\frac12\,\DeltaE(\trE h)$, the remaining terms in the scalar-curvature expansion are at least quadratic. Hence
	\begin{align}
		\DeltaE(\trE h)
		=
		h*\partial^2 h+\partial h*\partial h.
		\label{eq:trace-controlled-schematic-short}
	\end{align}
	Differentiating twice, we obtain only quadratic terms, namely combinations of $h*\partial^4 h$, $\partial h*\partial^3 h$, and $\partial^2 h*\partial^2 h$. In particular, the trace terms in \eqref{eq:DBach-explicit-bianchi-short} do not produce any new linear fourth-order term, and may therefore be included in the nonlinear error.
	
	Finally, we expand the corresponding nonlinear equation around $\gE$.
	In the harmonic-curvature case we use the consequence
	$\nabla^k C_{kij}=0$; in the Bach-flat case we use \eqref{eq:Bach-flat-equation}. The two
	equations have the same linear fourth-order part, namely
	$D(\nabla^k C_{kij})_{\gE}(h)$.
	Using \eqref{eq:DBach-explicit-bianchi-short} and then applying
	\eqref{eq:trace-controlled-schematic-short} to absorb the trace terms
	into the nonlinear part, we obtain
	\begin{align*}
		\frac14\,\DeltaE^2 h
		+
		h*\partial^4 h
		+
		\partial h*\partial^3 h
		+
		\partial^2 h*\partial^2 h
		+
		\partial h*\partial h*\partial^2 h
		+
		\partial h*\partial h*\partial h*\partial h
		=
		0
		\qquad\text{on }\Omega_R.
	\end{align*}
	Thus, in Euclidean Bianchi gauge, both the scalar-flat Bach-flat equation and the scalar-flat harmonic-curvature equation are small quasilinear perturbations of the Euclidean biharmonic equation, with principal part $\DeltaE^2 h$.
	
	\subsection{Biharmonic reduction and rough metric expansion}
	\label{subsec:proof-leading-step2}
	
	In the previous subsection, we showed that, in Euclidean Bianchi gauge, the
	equations arising from both alternatives in Assumption~\ref{ass:TVbaseline}
	take the common schematic form
	\begin{align*}
		\frac14\,\DeltaE^2 h=\mathcal N(h).
	\end{align*}
	where
	\begin{align*}
		\mathcal N(h)
		:=
		h*\partial^4 h
		+
		\partial h*\partial^3 h
		+
		\partial^2 h*\partial^2 h
		+
		\partial h*\partial h*\partial^2 h
		+
		(\partial h)^{*4}.
	\end{align*}
	We now use this equation to extract the first term in the expansion of $h$. The basic idea is to subtract a particular solution of the bi-Poisson equation and reduce the problem to a biharmonic one.
	
	From the baseline ALE decay and the gauge-fixing step, we may choose a non-integer
	$\alpha>4$ and $\tau_0\in(3/2,2)$ such that $h\in \mathcal X_{\alpha,\tau_0}(\Omega_R)$. Since every term in $\mathcal N(h)$ contains at least two factors of $h$ or its derivatives, it follows directly that $\mathcal N(h)\in \mathcal X_{\alpha-4,\,2\tau_0+4}(\Omega_R)$. Because $2\tau_0\in(3,4)$, the weight $2\tau_0$ is admissible for Corollary~\ref{cor:YW-bipoisson}. We therefore take $v:=\Psi^{(2)}\bigl(4\,\mathcal N(h)\bigr)\in \mathcal X_{\alpha,\,2\tau_0}(\Omega_R)$ and define $w:=h-v$. Then
	\begin{align*}
		\DeltaE^2 v=4\,\mathcal N(h),
		\qquad
		\DeltaE^2 w=0
		\qquad\text{on }\Omega_R,
	\end{align*}
	Since $\tau_0>1$, both $h$ and $v$ are $o(r^{-1})$, and therefore so is $w$.
	
	We now invoke the standard asymptotic expansion for biharmonic functions on an exterior domain; see, for instance, \cite{Kalf}. Applied componentwise to the $\Gamma$-invariant tensor $w$, it gives an expansion in spherical harmonics on $S^3/\Gamma$. Under the decay condition $w=o(r^{-1})$, all growing modes, the constant and logarithmic modes in the $\ell=0$ channel, and the $r^{-1}$ mode in the $\ell=1$ channel are excluded. Hence the first possible nonzero term occurs at order $r^{-2}$, and only the $\ell=0$ and $\ell=2$ modes can appear at that order. It follows that there exists a smooth $\Gamma$-invariant spherical tensor $H$ on $S^3$ such that
	\begin{align}\label{eq:w-standard-biharmonic-expansion}
		w(x)
		=
		\frac1{r^2}H(\theta)+\mathcal O_\infty(r^{-3}),
		\qquad
		r=|x|,
		\quad
		\theta=\frac{x}{r}.
	\end{align}
	Meanwhile, $v\in \mathcal X_{\alpha,\,2\tau_0}(\Omega_R)$ implies $v=\mathcal O_\infty(r^{-3})$, therefore, combining $h=w+v$ with \eqref{eq:w-standard-biharmonic-expansion}, we obtain
	\begin{align*}
		h(x)
		=
		\frac{1}{r^2}H(\theta)+\mathcal O_\infty(r^{-3}).
	\end{align*}
	
	Furthermore, since $H$ only contains the $\ell=0$ and $\ell=2$ modes, the tensor $r^{-2}H(\theta)$ can be written in Cartesian form as
	\begin{align*}
		\frac1{r^2}H(\theta)
		=
		A_{ijk\ell}\frac{x_kx_\ell}{r^4},
	\end{align*}
	where the constant tensor $A=(A_{ijk\ell})$ is defined as
	\begin{align}\label{eq:A-def}
		A_{ijk\ell}
		:=
		(D_0)_{ij}\delta_{k\ell}
		+
		\sum_{a\in\mathcal A_2}(C_{2,a})_{ij}Q^{(a)}_{k\ell},
	\end{align}
	here $\mathcal A_2$ is an index set for a basis $\{Y_{2,a}\}_{a\in\mathcal A_2}$ of the $\Gamma$-invariant $\ell=2$ eigenspace on $S^3/\Gamma$; each $Y_{2,a}$ can be expressed as $Y_{2,a}(\theta)=Q^{(a)}_{k\ell}\theta_k\theta_\ell$ for some $Q^{(a)}\in S^2_0(\R^4)$, and $D_0$ and $C_{2,a}$ are constant symmetric $2$-tensors.
	
	We have therefore proved the following proposition.
	
	\begin{proposition}\label{prop:proof-leading-h-expansion}
		There exist $R\gg1$ and a constant tensor $A=(A_{ijk\ell})$ such that
		\begin{align}\label{eq:h-leading-cartesian}
			h_{ij}(x)
			=
			A_{ijk\ell}\,\frac{x_kx_\ell}{r^4}
			+
			\mathcal O_\infty(r^{-3})
			\qquad\text{on }\Omega_R.
		\end{align}
	\end{proposition}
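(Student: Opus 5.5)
The proof runs in three steps: gauge fixing, reduction to a biharmonic equation, and expansion of the biharmonic part in spherical harmonics.

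\emph{Step 1: gauge and equation.} Start from the Tian--Viaclovsky chart of Theorem~\ref{thm:baseline-ALE}. Apply Proposition~\ref{prop:solve-bianchi} to obtain coordinates in which $h=g-\gE$ satisfies $\BE(h)=0$ and $h\in\mathcal X_{\alpha,\tau_0}(\Omega_R)$ for some non-integer $\alpha>4$ and $\tau_0\in(3/2,2)$. The computation of Subsection~\ref{subsec:new bach-flat equation} then gives $\frac14\DeltaE^2h=\mathcal N(h)$ in both branches of Assumption~\ref{ass:TVbaseline}. In the Bach-flat branch, the term $P*W$ is quadratic. In both branches, the trace terms are absorbed using $\Scal=0$ via \eqref{eq:trace-controlled-schematic-short}.

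\emph{Step 2: subtract a particular solution.} Every term of $\mathcal N(h)$ is at least quadratic in $(h,\partial h,\dots,\partial^4h)$. The product and derivative estimates (Lemmas~\ref{lem:X-derivative}, \ref{lem:X-product}) therefore give $\mathcal N(h)\in\mathcal X_{\alpha-4,2\tau_0+4}$. Since $2\tau_0\in(3,4)$ is non-integer, Corollary~\ref{cor:YW-bipoisson} yields $v=\Psi^{(2)}(4\mathcal N(h))\in\mathcal X_{\alpha,2\tau_0}$ with $\DeltaE^2v=4\mathcal N(h)$. Then $w=h-v$ is biharmonic on $\Omega_R$ and $w=o(r^{-1})$. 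Because $2\tau_0>3$, we also have $v=\mathcal O_\infty(r^{-3})$. To get estimates on all derivatives, I would either run the argument for every $\alpha$ or use interior scaled elliptic estimates on annuli.

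\emph{Step 3: biharmonic expansion.} Expand $w$ componentwise in spherical harmonics on $S^3$; a $\Gamma$-invariant tensor may be lifted to the cover. In dimension $4$, the radial profiles in degree $\ell$ are $r^{\ell}$, $r^{\ell+2}$, $r^{-\ell-2}$, $r^{-\ell}$. For $\ell=0$ the profiles degenerate to $1$, $r^2$, $r^{-2}$ and $\log r$.

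The condition $w=o(r^{-1})$ removes all growing modes, the constant and $\log r$ modes for $\ell=0$, and $r^{-1}$ for $\ell=1$. The surviving terms of order exactly $r^{-2}$ are $r^{-2}Y_0$ and $r^{-\ell}Y_\ell$ with $\ell=2$. All remaining modes decay at least like $r^{-3}$.

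The main obstacle is making this summation rigorous with uniform $\mathcal O_\infty(r^{-3})$ bounds on the tail. I would first apply the Kelvin-type inversion $x\mapsto x/|x|^2$, under which exterior biharmonic functions in $\R^4$ correspond to interior solutions. Alternatively I would cite the exterior expansion of \cite{Kalf} and convert coefficient decay into derivative bounds by scaling on dyadic annuli. Finally, each $\ell=2$ harmonic is $Q_{k\ell}\theta_k\theta_\ell$ with $Q$ trace-free, and the $\ell=0$ term is $\delta_{k\ell}\theta_k\theta_\ell$. Hence $r^{-2}H(\theta)=A_{ijk\ell}x_kx_\ell/r^4$ with $A$ as in \eqref{eq:A-def}, which is the claimed form \eqref{eq:h-leading-cartesian}.
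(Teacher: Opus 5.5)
Your proposal is correct and follows the paper's proof essentially step for step. Both arguments put the metric in Bianchi gauge, write it as a quasilinear perturbation of $\tfrac14\DeltaE^2 h=\mathcal N(h)$, subtract $v=\Psi^{(2)}(4\mathcal N(h))\in\mathcal X_{\alpha,2\tau_0}$, and use the exterior biharmonic mode expansion under $w=o(r^{-1})$, which the paper simply cites from the literature. Your extra remarks, namely scaled elliptic estimates for the derivative bounds on $v$ and the Kelvin inversion, which for $\DeltaE^2$ in dimension $4$ is plain composition with $x\mapsto x/|x|^2$, only make the paper's tacit steps more explicit.
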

	
	\begin{remark}
		For convenience, we shall refer to \eqref{eq:h-leading-cartesian} as \textit{the rough expansion} of the metric in the sequel.
	\end{remark}
	
	\medskip
	
	We now examine the coefficient $A$ in the rough expansion \eqref{eq:h-leading-cartesian}. 
	
	Write $(h^A)_{ij}(x):=A_{ijk\ell}\,\frac{x_kx_\ell}{r^4}$, then the Euclidean Bianchi gauge $\BE(h)=0$ imposes a condition on the leading term $h^A$. Directly computation reads
	\begin{align*}
		(\BE(h^A))_i
		=
		\frac{1}{r^3}\Bigl( -4\,A_{ijk\ell}\,\theta_j\theta_k\theta_\ell
		+
		\bigl(2A_{ijj\ell}-A_{pp i\ell}\bigr)\theta_\ell
		+
		2A_{ppk\ell}\,\theta_i\theta_k\theta_\ell\Bigr),
	\end{align*}
	where $\theta=\frac{x}{r}$. Since $h=h^A+\mathcal O_\infty(r^{-3})$, we have
	\begin{align*}
		\BE(h)=\BE(h^A)+\mathcal O_\infty(r^{-4}),
	\end{align*}
	and therefore $\BE(h)=0$ forces
	\begin{align}\label{eq:rougn-bianchi-gauge}
		-4\,A_{ijk\ell}\,\theta_j\theta_k\theta_\ell
		+
		\bigl(2A_{ijj\ell}-A_{pp i\ell}\bigr)\theta_\ell
		+
		2A_{ppk\ell}\,\theta_i\theta_k\theta_\ell\equiv0
		\qquad\text{on }\Sph^3.
	\end{align}
	
	It is convenient to rewrite this condition in tensor form. Set $U_{k\ell}:=A_{ppk\ell}$ and $V_{i\ell}:=A_{ijj\ell}$, since only the full symmetrization of $A$ in $j,k,\ell$ contributes to
	$A_{ijk\ell}\theta_j\theta_k\theta_\ell$, and $|\theta|=1$, the identity \eqref{eq:rougn-bianchi-gauge} is equivalent to
	\begin{align}\label{eq:bianchi-tensor-form}
		\begin{split}
			A_{i(jk\ell)}
			&=
			\frac{1}{12}\Bigl(
			\delta_{jk}(2V_{i\ell}-U_{i\ell})
			+
			\delta_{j\ell}(2V_{ik}-U_{ik})
			+
			\delta_{k\ell}(2V_{ij}-U_{ij})
			\Bigr)
			\\
			&\qquad +
			\frac{1}{6}\Bigl(
			\delta_{ij}U_{k\ell}
			+
			\delta_{ik}U_{j\ell}
			+
			\delta_{i\ell}U_{jk}
			\Bigr),
		\end{split}
	\end{align}
	where $A_{i(jk\ell)}$ denotes the full symmetrization in $j,k,\ell$. Contracting
	\eqref{eq:bianchi-tensor-form} with $\delta_{jk}$ further gives
	\begin{align}\label{eq:bianchi-trace-identity}
		2A_{ijj\ell}
		-
		2A_{i\ell kk}
		-
		A_{pp i\ell}
		+
		\delta_{i\ell}A_{ppkk}
		=
		0.
	\end{align}
	
	We summarize the outcome as follows.
	
	\begin{proposition}\label{prop:rough-A-structure}
		Let $A$ be the tensor in Proposition~\ref{prop:proof-leading-h-expansion}. Then:
		\begin{enumerate}[label=\textup{(\arabic*)}]
			\item $A$ satisfies the pair symmetries
			\begin{align*}
				A_{ijk\ell}=A_{jik\ell}=A_{ij\ell k}.
			\end{align*}
			\item $A$ satisfies the tensor identity \eqref{eq:bianchi-tensor-form}.
			\item In particular, the partial traces of $A$ satisfy \eqref{eq:bianchi-trace-identity}.
		\end{enumerate}
	\end{proposition}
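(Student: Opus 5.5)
The three claims are essentially bookkeeping, and I will check them in order, using only Proposition~\ref{prop:proof-leading-h-expansion}, the definition \eqref{eq:A-def}, and the Bianchi gauge $\BE(h)=0$ from Proposition~\ref{prop:solve-bianchi}.

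\textbf{Pair symmetries.} These are read off from \eqref{eq:A-def}. Both $D_0$ and the $C_{2,a}$ are symmetric $2$-tensors because $h$ is a symmetric $2$-tensor. This gives $A_{ijk\ell}=A_{jik\ell}$. In the $(k,\ell)$ slots, $\delta_{k\ell}$ and each $Q^{(a)}\in S^2_0(\R^4)$ are symmetric, which gives $A_{ij\ell k}=A_{ijk\ell}$. If one prefers not to rely on this particular representation of $A$, one can argue directly. Only the symmetric part in $(k,\ell)$ contributes to $A_{ijk\ell}x_kx_\ell$, so $A$ may be replaced by its $(k,\ell)$-symmetrization without changing \eqref{eq:h-leading-cartesian}. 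Symmetry in $(i,j)$ is inherited from $h_{ij}=h_{ji}$ after symmetrizing in $(i,j)$ as well.

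\textbf{The identity \eqref{eq:bianchi-tensor-form}.} Since $h-h^A=\mathcal O_\infty(r^{-3})$ and $\BE$ is first order, we have $\BE(h^A)=\mathcal O_\infty(r^{-4})$. But $\BE(h^A)$ is homogeneous of degree $-3$, so its angular coefficient must vanish identically, which is \eqref{eq:rougn-bianchi-gauge}. The computation of $\BE(h^A)$ uses
\[
\partial_j\Bigl(\frac{x_kx_\ell}{r^4}\Bigr)=\frac{\delta_{jk}x_\ell+\delta_{j\ell}x_k}{r^4}-\frac{4x_jx_kx_\ell}{r^6}.
\]
Next I rewrite \eqref{eq:rougn-bianchi-gauge} as a polynomial identity. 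Multiplying the $\theta_\ell$ term by $|\theta|^2=\theta_j\theta_k\delta_{jk}$ turns it into a homogeneous cubic in $\theta$ that vanishes on $\Sph^3$, hence on all of $\R^4$ by homogeneity. A homogeneous cubic vanishes identically exactly when its fully symmetrized coefficient tensor in $(j,k,\ell)$ vanishes. Symmetrizing $\delta_{jk}(2V_{i\ell}-U_{i\ell})$ gives the three-term average; this is where the factor $\tfrac1{12}=\tfrac14\cdot\tfrac13$ comes from. Symmetrizing $\delta_{ij}U_{k\ell}$ gives the $\tfrac16=\tfrac24\cdot\tfrac13$ term. This yields \eqref{eq:bianchi-tensor-form}.

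\textbf{The trace identity \eqref{eq:bianchi-trace-identity}.} I contract \eqref{eq:bianchi-tensor-form} with $\delta_{jk}$. On the left, the pair symmetry gives
\[
A_{i(jj\ell)}=\tfrac13\bigl(2A_{ijj\ell}+A_{i\ell jj}\bigr).
\]
On the right, I use $\delta_{jj}=4$ together with the traces $V_{ii}$ and $U_{kk}=A_{ppkk}$. Clearing denominators then gives the stated relation.

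\textbf{Expected difficulty.} The only real risk is arithmetic, namely getting the coefficients right in the symmetrization and in the contraction. I would verify them by testing on $A_{ijk\ell}=\delta_{ij}\delta_{k\ell}$ and on $A_{ijk\ell}=\delta_{ik}\delta_{j\ell}+\delta_{i\ell}\delta_{jk}$.
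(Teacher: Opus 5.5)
Your proposal is correct and follows the paper's own route: the pair symmetries are read off from \eqref{eq:A-def}; the gauge condition forces the homogeneous degree $-3$ term $\BE(h^A)$ to vanish; polarizing the resulting cubic in $\theta$ gives \eqref{eq:bianchi-tensor-form}; and contracting with $\delta_{jk}$ gives \eqref{eq:bianchi-trace-identity}. One small correction: in that last contraction only $V_{i\ell}$, $U_{i\ell}$ and $\tr(U)$ appear on the right-hand side, so the trace $\tr(V)$ you mention does not actually enter.
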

	
	\begin{proof}
		The pair symmetries follow directly from the definition \eqref{eq:A-def}. The remaining statements are exactly the identities established above.
	\end{proof}
	
	\medskip
	
	So far, we have established a rough asymptotic expansion for the metric in the Euclidean Bianchi gauge, namely \eqref{eq:h-leading-cartesian}. Furthermore, the leading coefficient $A_{ijk\ell}$ satisfies a set of constraints, as detailed in Proposition~\ref{prop:rough-A-structure}. This gives a concrete leading term for the metric at infinity. In the next section, we will refine this rough $r^{-2}$ order term and isolate the part of $A$ that carries genuine geometric information.

	\section{Identification of Curvature at Infinity}\label{sec:proof-main-leading}
	
	This section proves Theorem~\ref{thm:main1} and Theorem~\ref{thm:main2}. 
	We start from the rough $r^{-2}$ coefficient $A$ obtained in the Euclidean Bianchi gauge in the previous section and analyze its remaining coordinate ambiguity as well as the potential intrinsic geometric information it contains.
	
	Motivated by the work of Wang--Yin~\cite{WY}, we consider whether the leading coefficient $A$ can be further simplified by a coordinate change so that it has the algebraic form expected of a Weyl-type term. Specifically, we impose the following coordinate transformation on the current coordinate system:
	\begin{align}\label{eq:corrdinate-traneformation}
		x=\varphi(y)=y+u(y),
	\end{align}
	where $u$ is chosen to be suitable for our purposes. We aim to track the Weyl-type information from the leading coefficient $A$ by the coordinate transformation \eqref{eq:corrdinate-traneformation}, naturally, we proceed by targeting the properties satisfied by the Weyl-type tensor.
	
	A canonical target, which is precisely the one considered in \cite{WY}, can be described as follows.
	
	Let us begin with some notation. Denote the space of algebraic curvature tensors on $\R^4$ by $\mathcal C$, and the algebraic Weyl space by $\mathcal W\subset \mathcal C$. Let
	\begin{align}\label{eq:def-s-map}
		s:\mathcal C\longrightarrow S^2(\R^4)\otimes S^2(\R^4),
		\quad
		(s(R))_{ijk\ell}:=\frac12\bigl(R_{ik\ell j}+R_{i\ell kj}\bigr).
	\end{align}
	Write $\widetilde{\mathcal C}:=s(\mathcal C)$ and $\widetilde{\mathcal W}:=s(\mathcal W)$. In fact, the so-called Weyl-type tensor that we aim to track from $A$ is precisely an element of $\widetilde{\mathcal W}$. Therefore, we need to gain a clear understanding of the elements in $\widetilde{\mathcal W}$.
	
	The following position reveals a useful criterion for a tensor to belong to $\widetilde{\mathcal W}$.
	
	\begin{proposition}\label{prop:Wtilde-eq}
		Let $A\in S^2(\R^4)\otimes S^2(\R^4)$. Then $A\in \widetilde{\mathcal W}$ if and only if
		$A$ satisfies the following three conditions:
		\begin{enumerate}[label=\textup{(\roman*)}]
			\item $A_{ijk\ell}=A_{jik\ell}=A_{ij\ell k}$;
			\item $A_{ppk\ell}=0$ and $A_{ijj\ell}=0$;
			\item
			$A_{ijk\ell}+A_{ik\ell j}+A_{i\ell jk}=0.$
		\end{enumerate}
		Moreover, if these conditions hold, then $A=s(\mathcal W_\infty)$ for a unique
		$\mathcal W_\infty\in\mathcal W$, and in fact
		\begin{align}\label{eq:def-R-map}
			(\mathbf R A)_{ijk\ell}:=A_{j\ell ik}-A_{jk i\ell}-A_{i\ell jk}+A_{ik j\ell},
		\end{align}
		satisfies
		\begin{align}\label{eq:inverse-formula}
			\mathcal W_\infty=-\frac13\,\mathbf R A.
		\end{align}
	\end{proposition}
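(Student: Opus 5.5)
The plan is to prove the two directions separately. Along the way I will show that $s$ is injective on $\mathcal C$ and that $\mathbf R\circ s=-3\,\Id$ on $\mathcal C$; the inverse formula \eqref{eq:inverse-formula} then follows.

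\emph{Necessity.} Let $W\in\mathcal W$ and $A=s(W)$, so that $A_{ijk\ell}=\tfrac12(W_{ik\ell j}+W_{i\ell kj})$.
\begin{itemize}
\item Symmetry in $k,\ell$ is built in. For symmetry in $i,j$, pair symmetry followed by the two antisymmetries gives $W_{ik\ell j}=W_{\ell jik}=W_{j\ell ki}$. Hence $A_{ijk\ell}=\tfrac12(W_{j\ell ki}+W_{jk\ell i})=A_{ji\ell k}=A_{jik\ell}$, which is (i).
\item For (ii), $A_{ppk\ell}$ is a multiple of the Ricci contraction $W_{pk\ell p}$, which vanishes. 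Likewise $A_{ijj\ell}=\tfrac12(W_{ij\ell j}+W_{i\ell jj})=\tfrac12 W_{ij\ell j}=0$.
\item For (iii), I expand $A_{ijk\ell}+A_{ik\ell j}+A_{i\ell jk}$ into six Weyl terms. I group them as two cyclic sums in the last three slots, each of which vanishes by the first Bianchi identity after at most one use of pair symmetry.
\end{itemize}

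\emph{Sufficiency.} Assume $A$ satisfies (i)--(iii) and set $R:=\mathbf RA$. I will check that $R$ is an algebraic curvature tensor.
\begin{itemize}
\item Antisymmetry in $(i,j)$: swapping $i\leftrightarrow j$ in \eqref{eq:def-R-map} permutes the four terms with sign changes, once each term is rewritten using (i). For example $A_{i\ell jk}=A_{\ell ikj}$, $A_{ikj\ell}=A_{ki\ell j}$, and so on.
\item Antisymmetry in $(k,\ell)$ follows in the same way.
\item Pair symmetry $R_{ijk\ell}=R_{k\ell ij}$ follows since $A_{ijk\ell}$ is symmetric under exchanging the pairs $(ij)$ and $(k\ell)$ up to (i). I should double-check whether this exchange symmetry really follows from (i) and (iii). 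If not, I will derive it from (iii) applied twice, since (iii) together with (i) forces the full symmetrization $A_{(ijk\ell)}$ to vanish and relates $A_{ijk\ell}$ to $A_{k\ell ij}$.
\item The first Bianchi identity for $R$ is a cyclic sum of twelve $A$-terms, which I reorganize into sums of the form (iii).
\item Trace-freeness: $R_{ijki}$ is a combination of $A_{ppjk}$, $A_{jppk}$-type traces, all of which vanish by (ii).
\end{itemize}
Thus $R\in\mathcal W$.

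\emph{The identity $s(\mathbf RA)=-3A$.} This is the main computational step. I compute
\begin{align*}
s(R)_{ijk\ell}=\tfrac12\bigl(R_{ik\ell j}+R_{i\ell kj}\bigr).
\end{align*}
Each $R$-term contributes four $A$-terms. Using (i), two of them collapse to $-A_{ijk\ell}$ each, while the others appear in the combinations $A_{ik\ell j}+A_{i\ell jk}$. By (iii) these combinations equal $-A_{ijk\ell}$. Summing should give $-3A_{ijk\ell}$, so $A=s(-\tfrac13\mathbf RA)\in\widetilde{\mathcal W}$.

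\emph{Uniqueness.} I show that $s$ is injective on $\mathcal C$: if $s(R)=0$ then $R(x,y,y,x)=s(R)_{xx yy}$-type contractions vanish for all $x,y$. Since an algebraic curvature tensor is determined by its sectional-curvature quadratic form, $R=0$. Finally, applying the same computation to $A=s(W)$ for an arbitrary Weyl tensor $W$ should give $\mathbf R s(W)=-3W$, which confirms \eqref{eq:inverse-formula}.

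The sign bookkeeping in this $s(\mathbf RA)$ calculation, and the verification of pair symmetry of $\mathbf RA$, are where I expect the real work. Condition (iii) is used essentially in both.
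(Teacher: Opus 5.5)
Your route is the paper's: necessity by direct symmetry checks, then $R:=\mathbf RA\in\mathcal W$, the identity $s(\mathbf RA)=-3A$, and injectivity of $s$. Your sectional-curvature argument for injectivity is a fine substitute for the paper's $\mathbf R\circ s=-3\,\Id$. The gap is the one genuine lemma of the sufficiency direction, the pair-exchange symmetry $A_{ijk\ell}=A_{k\ell ij}$, which the paper proves first. You leave it hedged and attach it to the wrong step, and two of your steps fail without it.

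\emph{Where pair exchange is not needed.} Antisymmetry, pair symmetry and the first Bianchi identity of $\mathbf RA$ follow from (i) alone. Indeed $\mathbf R(a\otimes b)$ is, up to sign, the Kulkarni--Nomizu product of $a$ and $b$, and the twelve Bianchi terms cancel in pairs by (i).

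\emph{Where it is needed.}
\begin{itemize}
\item The Ricci contraction $\sum_p(\mathbf RA)_{pjkp}=A_{jppk}-A_{jkpp}-A_{ppjk}+A_{pkjp}$ contains the second-pair trace $A_{jkpp}$, which (ii) does not control. So ``all vanish by (ii)'' is false as stated.
\item In $s(\mathbf RA)$, the terms $-A_{k\ell ij}-A_{\ell kij}$ do not collapse to $-A_{ijk\ell}$ by (i). After using (i), and (iii) at base indices $i$ and $j$, one gets exactly $s(\mathbf RA)_{ijk\ell}=-2A_{ijk\ell}-A_{k\ell ij}$. This equals $-3A_{ijk\ell}$ only once pair exchange is known.
\end{itemize}

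\emph{The repair.} Set $D_{abcd}:=A_{abcd}-A_{cdab}$. Subtract (iii) with base index $j$ from (iii) with base index $i$ and use (i); this gives $D_{ikj\ell}+D_{i\ell jk}=0$. So $D$ is odd under swapping slots $2,4$ and even under swapping slots $1,2$ (and $3,4$). Now $(1\,2)(2\,4)(1\,2)=(2\,4)(1\,2)(2\,4)=(1\,4)$, so $D$ would be both odd and even under swapping slots $1,4$. Hence $D=0$.

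With pair exchange in hand:
\begin{itemize}
\item $A_{jkpp}=A_{ppjk}=0$. Alternatively, contracting (iii) over $k=j$ gives $A_{i\ell jj}=-2A_{ijj\ell}=0$ directly.
\item The $s(\mathbf RA)$ computation closes to $-3A$.
\end{itemize}
The rest of your plan then goes through as in the paper.
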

	
	This proposition is essentially a standard algebraic fact. We only record a brief proof.
	
	\begin{proof}
		The implication $A=s(\mathcal W_\infty)\Rightarrow$\textup{(i)--(iii)} is immediate from the algebraic symmetries, trace-freeness, and first Bianchi identity of $\mathcal W_\infty$.
		
		Conversely, assume that $A$ satisfies \textup{(i)--(iii)}. A routine use of \textup{(iii)} together with the pairwise symmetries in \textup{(i)} shows that $A$ also satisfies the pair-exchange symmetry $	A_{ijk\ell}=A_{k\ell ij}$. Now define $R:=\mathbf RA$ by \eqref{eq:def-R-map}. Using \textup{(i)}, \textup{(iii)}, and the pair-exchange symmetry, one checks directly that $R$ is an algebraic curvature tensor. Moreover, \textup{(ii)} implies that its Ricci contraction vanishes, while Ricci-flatness is equivalent to
		total trace-freeness, so in fact $R\in\mathcal W$.
		
		Finally, substituting \eqref{eq:def-R-map} into the definition of $s$ and simplifying by \textup{(i)} and \textup{(iii)}, one obtains
		\begin{align*}
			s(\mathbf RA)=-3A,
		\end{align*}
		which proves that $A\in\widetilde{\mathcal W}$ and gives the formula \eqref{eq:inverse-formula}.
		
		Uniqueness follows from the same identity: if $A=s(\mathcal W_1)=s(\mathcal W_2)$, then
		\begin{align*}
			-3\mathcal W_1=\mathbf R(s(\mathcal W_1))=\mathbf R(s(\mathcal W_2))=-3\mathcal W_2.
		\end{align*}
	\end{proof}
	
	\begin{remark}
		For notational convenience, we shall refer to the three properties summarized in the proposition as: pair symmetry, partial trace-free, and first Bianchi identity.
	\end{remark}
	
	\medskip
	
	Returning to our previous discussion, since we aim to trace the Weyl-type tensor within the leading coefficient $A$, it is natural to track the three properties satisfied by the Weyl-type tensor introduced in Proposition~\ref{prop:Wtilde-eq}. Of course, we cannot expect $A$ to satisfy these properties at this stage. However, starting from the coordinate transformation \eqref{eq:corrdinate-traneformation}, we can eliminate the coordinate freedom to track these properties. If a certain property is not satisfied, we can remove the obstruction so that the remaining part satisfies the corresponding property, allowing us to proceed to the next one. This is the overall philosophy of this section.
	
	\subsection{First elimination attempt and the partial trace-Free obstruction}
	\label{subsec:partial-trace-free-obstruction S}
	
	We continue from the rough expansion in Euclidean Bianchi gauge
	\begin{align}\label{eq:h-leading-cartesian2}
		h_{ij}(x)
		=
		A_{ijk\ell}\,\frac{x_kx_\ell}{r^4}
		+
		\mathcal O_\infty(r^{-3}),
		\qquad r=|x|,
	\end{align}
	obtained in Subsection~\ref{subsec:proof-leading-step2}. The coefficient $A$ still depends on the choice of exterior
	coordinates. We must remove as much of this ambiguity as possible.
	
	We now formally begin tracking the three properties from $A$ mentioned in Proposition~\ref{prop:Wtilde-eq}. First, the pair symmetry is evident; indeed, it is guaranteed by Proposition~\ref{prop:rough-A-structure}. Let us now turn our attention to the second property, namely the partial trace-free condition.
	
	As previously mentioned, our first tool for tracking the Weyl-type tensor from the leading coefficient $A$ is the coordinate transformation of the form \eqref{eq:corrdinate-traneformation}. Therefore, let us first examine how $A$ transforms under the change of coordinates \eqref{eq:corrdinate-traneformation}.
	
	Denote the metric after the transformation of coordinates as $\tilde g=\gE+\tilde h$, corresponding to the original $g=\gE+h$. If we require $u$ in the coordinate transformation \eqref{eq:corrdinate-traneformation} to satisfy $u=\mathcal O_\infty(r^{-1})$, and combine this with $h=\mathcal O_\infty(r^{-2})$, we obtain the following expansion for the first non-trivial term $\tilde h$:
	\begin{align}\label{eq:pullback-linearization-r-2}
		\tilde h_{ij}(y)
		=
		h_{ij}(y)+\partial_i u_j+\partial_j u_i+\mathcal O_\infty(r^{-3}).
	\end{align}
	Thus, at the level of the leading coefficient, the pullback is governed by the linear term $\mathcal L_u\gE$.
	
	We further specialize the function $u$ in the coordinate transformation \eqref{eq:corrdinate-traneformation} to be $u_i(y)=B_{ij}\tfrac{y_j}{|y|^2}$, where $B$ is a constant matrix, then the linear term $\mathcal L_u\gE$ in \eqref{eq:pullback-linearization-r-2} can be computed as
	\begin{align*}
		(\mathcal L_u\gE)_{ij}(y)
		=
		\Psi_4(B)_{ijk\ell}\,\frac{y_k y_\ell}{r^4},
	\end{align*}
	where
	\begin{align}\label{eq:Psi4-def}
		\Psi_4(B)_{ijk\ell}
		:=
		(B_{ij}+B_{ji})\,\delta_{k\ell}
		-
		\Bigl(
		B_{ik}\,\delta_{j\ell}
		+
		B_{i\ell}\,\delta_{jk}
		+
		B_{jk}\,\delta_{i\ell}
		+
		B_{j\ell}\,\delta_{ik}
		\Bigr).
	\end{align}
	
	Therefore, corresponding to the asymptotic expansion \eqref{eq:h-leading-cartesian2} of the first non-trivial term before the coordinate transformation, under $x_i=y_i+B_{ij}\tfrac{y_j}{|y|^2}$, $h$ transforms into
	\begin{align*}
		\tilde h_{ij}(y)
		=
		\Bigl(A_{ijk\ell}+\Psi_4(B)_{ijk\ell}\Bigr)\frac{y_k y_\ell}{r^4}
		+
		\mathcal O_\infty(r^{-3}),
	\end{align*}
	so the leading coefficient transforms by
	\begin{align*}
		A\longmapsto \widetilde A:=A+\Psi_4(B).
	\end{align*}
	After averaging over $\Gamma$, we may assume that $B$ is $\Gamma$-invariant.
	
	The next lemma records the effect of $\Psi_4(B)$ on the two partial traces of $A$. This is a direct contraction of \eqref{eq:Psi4-def}.
	
	\begin{lemma}\label{lem:Psi4-trace-identities}
		Let $m=4$ and let $\Psi_4(B)$ be defined by \eqref{eq:Psi4-def}. Then
		\begin{align*}
			\begin{split}
				\Psi_4(B)_{ijkk}
				&=
				2\,(B_{ij}+B_{ji}),\\
				\Psi_4(B)_{ppk\ell}
				&=
				2\,\tr(B)\,\delta_{k\ell}-4\,B_{(k\ell)},\\
				\Psi_4(B)_{ijj\ell}
				&=
				-4\,B_{(i\ell)}-6\,B_{[i\ell]}-\tr(B)\,\delta_{i\ell},
			\end{split}
		\end{align*}
		where $B_{(i\ell)}=\frac12(B_{i\ell}+B_{\ell i})$ and
		$B_{[i\ell]}=\frac12(B_{i\ell}-B_{\ell i})$.
	\end{lemma}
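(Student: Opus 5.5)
The plan is a direct index computation: contract the explicit formula \eqref{eq:Psi4-def} in each of the three pairs of slots, using only $\delta_{kk}=4$ (this is where $m=4$ enters) and $B_{ij}\delta_{jk}=B_{ik}$. Nothing from the analytic part of the paper is needed. I would write
\begin{align*}
\Psi_4(B)_{ijk\ell}=T^{(0)}_{ijk\ell}-T^{(1)}_{ijk\ell}-T^{(2)}_{ijk\ell}-T^{(3)}_{ijk\ell}-T^{(4)}_{ijk\ell},
\end{align*}
where $T^{(0)}=(B_{ij}+B_{ji})\delta_{k\ell}$ and $T^{(1)},\dots,T^{(4)}$ are the four terms in the bracket. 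Each trace is then obtained term by term.

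\emph{Trace over $k=\ell$.} The term $T^{(0)}$ gives $4(B_{ij}+B_{ji})$. The terms $B_{ik}\delta_{jk}$ and $B_{ik}\delta_{ik}$-type contractions reduce to $B_{ij},B_{ij},B_{ji},B_{ji}$. The total is therefore $4(B_{ij}+B_{ji})-2(B_{ij}+B_{ji})=2(B_{ij}+B_{ji})$.

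\emph{Trace over $i=j=p$.} The term $T^{(0)}$ gives $2\tr(B)\,\delta_{k\ell}$. The four bracket terms give $B_{\ell k},B_{k\ell},B_{\ell k},B_{k\ell}$, whose sum is $4B_{(k\ell)}$. This yields the second identity.

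\emph{Trace over $j=k$.} This is the only contraction where care is needed, because $B$ is not assumed symmetric and the result has an antisymmetric part.
\begin{itemize}
\item $T^{(0)}$ gives $(B_{ij}+B_{ji})\delta_{j\ell}=B_{i\ell}+B_{\ell i}$.
\item $B_{ij}\delta_{j\ell}$ gives $B_{i\ell}$.
\item $B_{i\ell}\delta_{jj}$ gives $4B_{i\ell}$.
\item $B_{jj}\delta_{i\ell}$ gives $\tr(B)\,\delta_{i\ell}$.
\item $B_{j\ell}\delta_{ij}$ gives $B_{i\ell}$.
\end{itemize}
Hence $\Psi_4(B)_{ijj\ell}=B_{\ell i}-5B_{i\ell}-\tr(B)\,\delta_{i\ell}$.

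Finally, substitute $B_{i\ell}=B_{(i\ell)}+B_{[i\ell]}$ and $B_{\ell i}=B_{(i\ell)}-B_{[i\ell]}$. This gives $-4B_{(i\ell)}-6B_{[i\ell]}-\tr(B)\,\delta_{i\ell}$, as claimed.

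I expect no genuine obstacle. The only real risk is bookkeeping in the $j=k$ contraction: keeping the order of indices on the nonsymmetric $B$ and not forgetting the factor $\delta_{jj}=4$. I would double-check the result by taking the full trace $i=\ell$ of the third identity ($-4\tr B-4\tr B=-8\tr B$) and comparing it with the $k=\ell$ trace of the second identity ($8\tr B-4\tr B=4\tr B$). These are different total traces of $\Psi_4(B)$, so I would also compare with the direct double traces $\Psi_4(B)_{ppkk}=4\tr B$ and $\Psi_4(B)_{ijji}=-8\tr B$ computed from \eqref{eq:Psi4-def}. Both agree.
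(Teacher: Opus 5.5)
Your proposal is correct and is the same argument as the paper, which simply states that the three identities follow by direct contraction of \eqref{eq:Psi4-def}. Your term-by-term contractions check out, in particular the $j=k$ trace $\Psi_4(B)_{ijj\ell}=B_{\ell i}-5B_{i\ell}-\tr(B)\,\delta_{i\ell}$ and its split into symmetric and skew parts. (One harmless slip: in the $k=\ell$ trace, the last two bracket terms are of type $B_{jk}\delta_{ik}$, not $B_{ik}\delta_{ik}$, although the values $B_{ji},B_{ji}$ you record for them are correct.)
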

	
	\medskip
	
	Now let $U_{k\ell}:=A_{ppk\ell}$ and $V_{i\ell}:=A_{ijj\ell}$. If one wants $\widetilde A$ to be trace-free in both partial traces, then one must solve
	\begin{align*}
		\widetilde A_{ppk\ell}=0,
		\qquad
		\widetilde A_{ijj\ell}=0.
	\end{align*}
	The next lemma shows that this system is not always solvable.
	
	\begin{lemma}\label{lem:LA-Psi4-trace-normalization}
		Assume that $A$ satisfies the pair symmetries, i.e. $A_{ijk\ell}=A_{jik\ell}=A_{ij\ell k}$. Then there exists a matrix $B$ such that, with $W:=A+\Psi_4(B)$, there hold $W_{ppk\ell}=0$ and $W_{ijj\ell}=0$, if and only if
		\begin{align}\label{eq:first-obstruction}
			\bigl(V_{(i\ell)}-U_{i\ell}\bigr)_0=0
			\quad\text{and}\quad
			\tr(V)=-2\,\tr(U).
		\end{align}
	\end{lemma}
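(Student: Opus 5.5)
The plan is to treat the statement as a finite-dimensional linear system for $B$. We use the three contraction formulas of Lemma~\ref{lem:Psi4-trace-identities} and split $B$ into its symmetric and antisymmetric parts, $B=S+\Omega$ with $S:=B_{(\cdot\cdot)}$ and $\Omega:=B_{[\cdot\cdot]}$. We further decompose $S=S_0+\tfrac{t}{4}\delta$, where $t:=\tr(B)=\tr(S)$. Since $\Psi_4$ is linear and the partial traces are linear, $W_{ppk\ell}=U_{k\ell}+\Psi_4(B)_{ppk\ell}$ and $W_{ijj\ell}=V_{i\ell}+\Psi_4(B)_{ijj\ell}$. The two trace-free conditions therefore become
\begin{align*}
U_{k\ell}+2t\,\delta_{k\ell}-4S_{k\ell}=0,
\qquad
V_{i\ell}-4S_{i\ell}-6\Omega_{i\ell}-t\,\delta_{i\ell}=0 .
\end{align*}
By the pair symmetry in Proposition~\ref{prop:rough-A-structure}, $U$ is symmetric. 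In general $V$ is not symmetric, because $A$ need not have pair-exchange symmetry. So we project both equations onto antisymmetric, trace-free symmetric, and pure-trace components.

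First, the antisymmetric part of the second equation is $V_{[i\ell]}=6\Omega_{i\ell}$. It is always solvable, by $\Omega:=\tfrac16 V_{[\cdot\cdot]}$, and it does not interact with anything else. Second, the trace-free symmetric parts give $U_0=4S_0$ and $(V_{(\cdot\cdot)})_0=4S_0$. These are simultaneously solvable exactly when $(V_{(i\ell)}-U_{i\ell})_0=0$, and then $S_0:=\tfrac14U_0$. Third, taking traces in dimension $4$ gives
\begin{align*}
\tr(U)+8t-4t=0 \ \Longrightarrow\ \tr(U)=-4t,
\qquad
\tr(V)-4t-4t=0 \ \Longrightarrow\ \tr(V)=8t .
\end{align*}
There is a single unknown $t$ for two scalar equations, so a solution exists if and only if $\tr(V)=-2\tr(U)$, with $t:=-\tfrac14\tr(U)$. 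This gives both directions at once. Necessity follows because any solution $B$ must satisfy all three component equations. Sufficiency follows by defining $B:=S_0+\tfrac t4\delta+\Omega$ with the choices above and checking the two displayed equations componentwise.

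A remark on $\Gamma$-invariance, which the surrounding construction needs: $U$, $V$, and hence $S_0$, $t$, $\Omega$ are built equivariantly from the $\Gamma$-invariant tensor $A$. The resulting $B$ is therefore $\Gamma$-invariant automatically, and no averaging is needed.

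The only real care point is the bookkeeping in the trace computation. Specifically, one has to confirm the contraction coefficients ($\tr(\delta)=4$, giving $2\cdot4t-4t$ and $4t+4t$) and recognize that the scalar mismatch $\tr(V)+2\tr(U)$ is the genuine obstruction. This is the one-dimensional space that the linear map $B\mapsto(\Psi_4(B)_{pp\cdot\cdot},\Psi_4(B)_{\cdot jj\cdot})$ fails to reach, and it is the quantity later identified with the mass. Beyond that, the argument is routine linear algebra.
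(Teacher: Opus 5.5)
Your proof is correct and is essentially the paper's proof. You reduce via Lemma~\ref{lem:Psi4-trace-identities} to the linear system for $B$ and resolve it into antisymmetric, trace-free symmetric and pure-trace components, which reproduces the paper's necessity computation and exactly its choice $B_{[i\ell]}=\tfrac16V_{[i\ell]}$, $\tr(B)=-\tfrac14\tr(U)$, $B_{(k\ell)}=\tfrac14U_{k\ell}-\tfrac18\tr(U)\delta_{k\ell}$. One small correction to your closing remark: in the generality of the lemma, the map $B\mapsto(\Psi_4(B)_{pp\cdot\cdot},\Psi_4(B)_{\cdot jj\cdot})$ misses a $10$-dimensional space (the $9$ trace-free symmetric conditions plus the scalar one). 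The obstruction becomes one-dimensional only after Proposition~\ref{prop:residual-charge-vanishes} removes the trace-free part.
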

	
	\begin{proof}
		By Lemma~\ref{lem:Psi4-trace-identities}, the condition $W_{ppk\ell}=0, W_{ijj\ell}=0$ is equivalent to
		\begin{subequations}\label{eq:partial-trace}
			\begin{empheq}[left=\empheqlbrace]{align}
				0 &= U_{k\ell} + 2\,\tr(B)\,\delta_{k\ell} - 4\,B_{(k\ell)}, \label{eq:partial-trace-a} \\
				0 &= V_{i\ell} - 4\,B_{(i\ell)} - 6\,B_{[i\ell]} - \tr(B)\,\delta_{i\ell}. \label{eq:partial-trace-b}
			\end{empheq}
		\end{subequations}
		
		We first prove the necessity of \eqref{eq:first-obstruction}. Assume that equations \eqref{eq:partial-trace} are solvable. Starting from \eqref{eq:partial-trace}, we extract relevant information to deduce \eqref{eq:first-obstruction}. Taking traces of both equations in \eqref{eq:partial-trace}, we obtain
		$\tr(B)=-\frac14\tr(U)$ and $\tr(B)=\frac18\tr(V)$, hence $\tr(V)=-2\tr(U)$. 
		
		The verification of $(V_{(i\ell)} - U_{i\ell})_0 = 0$ is also straightforward. Equation \eqref{eq:partial-trace-a} yields $B_{(k\ell)}=\frac14\,U_{k\ell}+\frac12\,\tr(B)\,\delta_{k\ell}$.
		Substituting this into the symmetric part of equation \eqref{eq:partial-trace-b}, we get
		\begin{align*}
			0
			&=
			V_{(i\ell)}-4\,B_{(i\ell)}-\tr(B)\,\delta_{i\ell} \\
			&=
			V_{(i\ell)}-U_{i\ell}-3\,\tr(B)\,\delta_{i\ell}.
		\end{align*}
		Therefore $V_{(i\ell)}-U_{i\ell}$ is pure trace, that is,
		$\bigl(V_{(i\ell)}-U_{i\ell}\bigr)_0=0$.
		This proves the necessity of \eqref{eq:first-obstruction}.
		
		Conversely, assume that \eqref{eq:first-obstruction} holds. Since
		$\bigl(V_{(i\ell)}-U_{i\ell}\bigr)_0=0$, the tensor $V_{(i\ell)}-U_{i\ell}$ is pure trace.
		Its trace equals $\tr(V)-\tr(U)=-3\tr(U)$ by the second condition in
		\eqref{eq:first-obstruction}. Hence
		\begin{align*}
			V_{(i\ell)}-U_{i\ell}
			=
			-\frac34\,\tr(U)\,\delta_{i\ell}.
		\end{align*}
		
		We now define $B$ by setting
		$B_{[i\ell]}:=\frac16\,V_{[i\ell]}$,
		$\tr(B):=-\frac14\,\tr(U)$, and
		$B_{(k\ell)}:=\frac14\,U_{k\ell}-\frac18\,\tr(U)\,\delta_{k\ell}$.
		Then equation \eqref{eq:partial-trace-a} holds by construction, and the skew-symmetric
		part of equation \eqref{eq:partial-trace-b} also holds by the definition of $B_{[i\ell]}$.
		For the symmetric part, we compute
		\begin{align*}
			V_{(i\ell)}-4\,B_{(i\ell)}-\tr(B)\,\delta_{i\ell}
			=
			V_{(i\ell)}-U_{i\ell}+\frac34\,\tr(U)\,\delta_{i\ell}
			=
			0.
		\end{align*}
		Thus \eqref{eq:partial-trace} is solvable, and therefore $W_{ppk\ell}=0$ and $W_{ijj\ell}=0$ follow.
	\end{proof}
	
	\medskip
	The lemma shows that the trace-killing problem is not always solvable, and the failure of the two equalities in \eqref{eq:first-obstruction} constitutes a potential obstruction. This naturally leads us to analyze whether the tensor $\bigl(V_{(i\ell)}-U_{i\ell}\bigr)_0$ can indeed vanish under the current conditions, and whether the trace relation $\tr(V)=-2\,\tr(U)$ holds.
	
	In fact, under the current setting, particularly based on the previously constructed Euclidean Bianchi gauge, the second trace relation is problematic. This is because directly contracting \eqref{eq:bianchi-trace-identity} yields $\tr(U)=-2\tr(V)$, which clearly does not directly imply the trace relation required by \eqref{eq:first-obstruction}. Fortunately, however, we are able to prove that $\bigl(V_{(i\ell)}-U_{i\ell}\bigr)_0=0$ is indeed valid.
	
	\begin{proposition}\label{prop:residual-charge-vanishes}
		Assume that on $\Omega_R\subset \R^4/\Gamma$ we have a metric $g=\gE+h$ in Euclidean
		Bianchi gauge $\BE(h)=0$ and with $\Scal(g)=0$. Assume moreover that $h=h^A+\mathcal O_\infty(r^{-3})$, where $(h^A)_{ij}(x)=A_{ijk\ell}\tfrac{x_kx_\ell}{r^4}$. Write $U_{k\ell}:=A_{ppk\ell}$ and $V_{i\ell}:=A_{ijj\ell}$, then
		\begin{align}\label{eq:vanish-result}
			\bigl(V_{(i\ell)}-U_{i\ell}\bigr)_0=0.
		\end{align}
	\end{proposition}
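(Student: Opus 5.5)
The plan is to combine two pieces of information about the leading coefficient $A$. The first is the scalar-flat condition, which we will show forces $U$ to be pure trace. The second is the tensor identity \eqref{eq:bianchi-tensor-form} coming from the Euclidean Bianchi gauge, contracted on the pair $(i,j)$ rather than the pair $(j,k)$ used for \eqref{eq:bianchi-trace-identity}. Together these give $(V_{(i\ell)})_0=0$ and $U_0=0$, hence \eqref{eq:vanish-result}.

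\emph{Step 1: scalar-flatness forces $U_0=0$.} In Bianchi gauge the linearized scalar curvature is $\delta\Scal=-\frac12\DeltaE(\trE h)$, and all nonlinear terms in $\Scal(\gE+h)$ are $\mathcal O_\infty(r^{-6})$, since $h=\mathcal O_\infty(r^{-2})$. The remainder $\mathcal O_\infty(r^{-3})$ in $h$ contributes only $\mathcal O_\infty(r^{-5})$ to $\DeltaE\trE h$. So $\Scal(g)=0$ yields
\begin{align*}
\DeltaE\Bigl(U_{k\ell}\frac{x_kx_\ell}{r^4}\Bigr)=\mathcal O(r^{-5}).
\end{align*}
The left-hand side is homogeneous of degree $-4$, so it must vanish identically. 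Split $U=U_0+\frac14\tr(U)\delta$. The trace part gives $\frac14\tr(U)\,r^{-2}$, which is harmonic on $\R^4\setminus\{0\}$. The traceless part gives $r^{-2}Y_2(\theta)$ with $Y_2(\theta)=(U_0)_{k\ell}\theta_k\theta_\ell$ a degree-$2$ spherical harmonic on $\Sph^3$, whose eigenvalue is $\ell(\ell+2)=8$. Using $\DeltaE(r^aY_2)=(a(a+2)-8)r^{a-2}Y_2$ with $a=-2$, we get $\DeltaE(r^{-2}Y_2)=-8r^{-4}Y_2$. Hence $Y_2\equiv0$, i.e.\ $U_0=0$.

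\emph{Step 2: a second contraction of the gauge identity.} Contract \eqref{eq:bianchi-tensor-form} with $\delta_{ij}$. Write $V^s:=V_{(k\ell)}$ and use the pair symmetries of Proposition~\ref{prop:rough-A-structure}. The left-hand side becomes $\frac13\bigl(U_{k\ell}+2V^s_{k\ell}\bigr)$. On the right-hand side, the first group contributes $\frac1{12}\bigl(4V^s_{k\ell}-2U_{k\ell}+(2\tr V-\tr U)\delta_{k\ell}\bigr)$, and the second group contributes $\frac16(4+1+1)U_{k\ell}=U_{k\ell}$. Multiplying by $12$ and simplifying gives
\begin{align*}
4\,V^s_{k\ell}=6\,U_{k\ell}+c\,\delta_{k\ell}
\end{align*}
for some scalar $c$ depending only on $\tr U$ and $\tr V$. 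I would double-check these coefficients carefully, since this is the one place where an arithmetic slip could break the argument. The only feature that matters is that the coefficient of $V^s$ minus the coefficient of $U$ does not leave a nontrivial traceless combination that is independent of $U_0$.

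\emph{Step 3: conclusion.} Taking traceless parts in Step 2 gives $(V^s)_0=\frac32U_0$. By Step 1, $U_0=0$, so $(V^s)_0=0$ as well. Therefore $\bigl(V_{(i\ell)}-U_{i\ell}\bigr)_0=(V^s)_0-U_0=0$, which is \eqref{eq:vanish-result}.

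The main obstacle is Step 1, specifically justifying that only the linear term survives at order $r^{-4}$. This relies on $h-h^A=\mathcal O_\infty(r^{-3})$ with derivatives, and on the quadratic terms in $\Scal$ decaying like $r^{-6}$. Both follow from Proposition~\ref{prop:proof-leading-h-expansion}. Note that the trace relation $\tr(V)=-2\tr(U)$ is not claimed here; it is exactly the scalar obstruction left over, which the paper will later identify with the mass.
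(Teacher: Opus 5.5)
Your proposal is correct and is essentially the paper's proof: scalar-flatness forces $U_0=0$ via $\DeltaE(r^{-2}Y_2)=-8r^{-4}Y_2$, and the $i=j$ contraction of \eqref{eq:bianchi-tensor-form} then kills the traceless part of $V$. Your coefficients check out, with $c=2\tr(V)-\tr(U)$, and tracing recovers $\tr(U)=-2\tr(V)$. Because you contract before substituting the pure-trace decompositions, you get the general relation $(V^s)_0=\tfrac32U_0$, which streamlines the paper's route slightly: you never need to prove separately that $V$ is symmetric.
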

	
	\begin{proof}
		Recall that in the Euclidean Bianchi gauge, by carrying out the Euclidean linearization of the scalar curvature at $\gE$, we obtain the following expansion:
		\begin{align*}
			\Scal(g)=-\frac12\,\DeltaE\trE(h)+\mathcal O\bigl(|h|\,|\partial^2 h|+|\partial h|^2\bigr),
		\end{align*}
		more precisely, the remainder term is of order $\mathcal O(r^{-6})$.
		
		We first prove $U$ is pure-trace.
		
		Denote $h=h^A+\eta$ and the Euclidean linearization $\Scal^\lin(\eta)=-\frac12\DeltaE\trE(h)$, then $\Scal^\lin(\eta)=\mathcal O_\infty(r^{-5})$. Plugging this into $\Scal(g)=0$ gives $\Scal^\lin(h^A)=\mathcal O_\infty(r^{-5})$. On the other hand, (denote $\theta_i=\tfrac{x_i}{r}$)
		\begin{align*}
			\trE(h^A)
			=
			U_{k\ell}\,\frac{x_kx_\ell}{r^4}
			=
			r^{-2}U_{k\ell}\theta_k\theta_\ell 
			=
			r^{-2}\left(\frac14\tr(U)+(U_{k\ell})_0\,\theta_k\theta_\ell\right),
		\end{align*}
		thus
		\begin{align*}
			\Scal^\lin(h^A)
			=
			-\frac{1}{2}\DeltaE\left[r^{-2}\left(\frac14\tr(U)+(U_{k\ell})_0\,\theta_k\theta_\ell\right)\right] 
			=
			4\,r^{-4}(U_{k\ell})_0\,\theta_k\theta_\ell,
		\end{align*}
		Since this is $\mathcal O_\infty(r^{-5})$, we get
		\begin{align*}
			(U_{k\ell})_0=0.
		\end{align*}
		So to prove \eqref{eq:vanish-result}, it suffices to show that $\bigl(V_{(i\ell)}\bigr)_0$ also vanishes. 
		
		Subtracting from \eqref{eq:bianchi-trace-identity} the same identity with $i$ and
		$\ell$ interchanged, and using the pair symmetry of $A$, we obtain $V_{i\ell}=V_{\ell i}$. Therefore, the goal of the proof has been reduced from $\bigl(V_{(i\ell)}-U_{i\ell}\bigr)_0=0$ to $\bigl(V_{i\ell}\bigr)_0=0$.
		
		Now, let us denote the trace-less symmetric 2-tensor $\bigl(V_{(i\ell)}-U_{i\ell}\bigr)_0$ by $\mathcal S$. Then the symmetric 2-tensors $U$ and $V$ can be decomposed respectively as
		\begin{align*}
			V_{i\ell}=\frac{\tr(V)}{4}\delta_{i\ell}+\mathcal S_{i\ell};
			\quad
			U_{k\ell}=\frac{\tr(U)}{4}\delta_{k\ell}.
		\end{align*}
		Substituting these decompositions into
		\eqref{eq:bianchi-tensor-form}, we obtain
		\begin{align*}
			A_{i(jk\ell)}
			&=
			\frac16\Bigl(
			\delta_{jk}\mathcal S_{i\ell}
			+\delta_{j\ell}\mathcal S_{ik}
			+\delta_{k\ell}\mathcal S_{ij}
			\Bigr)
			\\
			&\quad
			+
			\frac{2\tr(V)+\tr(U)}{48}\Bigl(
			\delta_{jk}\delta_{i\ell}
			+\delta_{j\ell}\delta_{ik}
			+\delta_{k\ell}\delta_{ij}
			\Bigr)
			\\
			&=
			\frac16\Bigl(
			\delta_{jk}\mathcal S_{i\ell}
			+\delta_{j\ell}\mathcal S_{ik}
			+\delta_{k\ell}\mathcal S_{ij}
			\Bigr).
		\end{align*}
		Since $\tr(S)=0$, setting $i=j=p$ in the above identity yields
		\begin{align*}
			A_{p(pk\ell)}
			=
			\frac13\,\mathcal S_{k\ell}.
		\end{align*}
		On the other hand, expanding the symmetrization and using the pair symmetries of $A$ and $V$, we have
		\begin{align*}
			A_{p(pk\ell)}
			=
			\frac16\Bigl(
			A_{ppk\ell}
			+
			A_{pp\ell k}
			+
			A_{pkp\ell}
			+
			A_{pk\ell p}
			+
			A_{p\ell pk}
			+
			A_{p\ell kp}
			\Bigr)
			=
			\frac13\bigl(U_{k\ell}+2V_{k\ell}\bigr).
		\end{align*}
		Therefore
		\begin{align*}
			U_{k\ell}+2V_{k\ell}=\mathcal S_{k\ell}.
		\end{align*}
		Taking trace-free parts and using $U_0=0$ and $V_0=\mathcal S$, we obtain \eqref{eq:vanish-result}.
	\end{proof}
	
	\medskip
	
	So far we have identified the main obstruction to the vanishing of the partial traces of the rough coefficient $A$, namely the potential failure of $\tr(V)+2\tr(U)=0$. This leads naturally to the next step: we split off from $A$ the part that gives rise to the failure.
	
	\subsection{Canonical splitting of the rough leading coefficient}
	\label{subsec:canonical-splitting}
	
	Continuing from the end of the previous subsection, since directly rendering the rough coefficient $A$ in \eqref{eq:h-leading-cartesian} partial trace-free via the coordinate transformation $x_i=y_i+B_{ij}\tfrac{y_j}{|y|^2}$ encounters an obstruction, which is quantified by $\tr(V)+2\tr(U)$, we relax the requirement. Instead of requiring both partial traces to vanish, we seek a coordinate transformation such that the transformed $A$ satisfies
	\begin{align*}
		A_{ppk\ell}=0,
		\qquad
		A_{ijj\ell}=A_{\ell jji}.
	\end{align*}
	That is, we kill the first partial trace and only require the second one to be symmetric. This weaker system holds promise of being solvable. The point of this adjustment is that it allows us to separate from $A$ the part responsible for the failure of the partial trace-free condition, while the remaining part can then be arranged to satisfy both partial trace-free identities.
	
	Specifically, if we seek a constant matrix $B$ within the orbit $A^\sharp=A+\Psi_4(B)$, where $\Psi_4$ is defined by \eqref{eq:Psi4-def} and $A$ satisfies $A_{ijk\ell}=A_{jik\ell}=A_{ij\ell k}$, such that $A^\sharp_{ppk\ell}=0$ and $A^\sharp_{ijj\ell}=A^\sharp_{\ell jji}$, it actually suffices to choose $B$ as follows:
	\begin{align}\label{eq:solution-B}
		\tr(B)=-\frac14\tr(U),
		\quad
		B_{(k\ell)}=\frac14U_{k\ell}-\frac18\tr(U)\delta_{k\ell},
		\quad
		B_{[i\ell]}=\frac16\,V_{[i\ell]}.
	\end{align}
	With this choice, set
	\begin{align*}
		\lambda(A):=\frac14\bigl(\tr(V)+2\,\tr(U)\bigr)
	\end{align*}
	and define
	\begin{align}\label{eq:Xi-def}
		\Xi(\mu)_{ijk\ell}
		:=
		-\frac{\mu}{9}\,\delta_{ij}\delta_{k\ell}
		+
		\frac{2\mu}{9}
		\bigl(\delta_{ik}\delta_{j\ell}+\delta_{i\ell}\delta_{jk}\bigr).
	\end{align}
	We then separate the scalar part from $A^\sharp$ by setting
	\begin{align}\label{eq:separate-first}
		W(A):=A^\sharp-\Xi\bigl(\lambda(A)\bigr).
	\end{align}
	The following proposition shows that, after the scalar obstruction $\lambda(A)$ has been tracked and removed, the remaining part has the partial trace-free property
	required of a Weyl-type term.
	
	\begin{proposition}\label{prop:W-properties}
		With $B$, $A^\sharp$ and $W(A)$ defined as above, one has
		\begin{align*}
			W(A)_{ppk\ell}=0,
			\qquad
			W(A)_{ijj\ell}=0.
		\end{align*}
	\end{proposition}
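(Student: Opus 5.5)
The plan is a direct computation of the two partial traces of $A^\sharp=A+\Psi_4(B)$ with $B$ given by \eqref{eq:solution-B}, followed by the corresponding traces of $\Xi(\mu)$. Write $U_{k\ell}=A_{ppk\ell}$ and $V_{i\ell}=A_{ijj\ell}$ as before. By Lemma~\ref{lem:Psi4-trace-identities}, the partial traces of $A^\sharp$ are
\begin{align*}
A^\sharp_{ppk\ell}&=U_{k\ell}+2\tr(B)\delta_{k\ell}-4B_{(k\ell)},\\
A^\sharp_{ijj\ell}&=V_{i\ell}-4B_{(i\ell)}-6B_{[i\ell]}-\tr(B)\delta_{i\ell}.
\end{align*}

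\textbf{First partial trace.} Substituting $\tr(B)=-\frac14\tr(U)$ and $B_{(k\ell)}=\frac14U_{k\ell}-\frac18\tr(U)\delta_{k\ell}$ gives
\begin{align*}
A^\sharp_{ppk\ell}=U_{k\ell}-\tfrac12\tr(U)\delta_{k\ell}-U_{k\ell}+\tfrac12\tr(U)\delta_{k\ell}=0.
\end{align*}
This step uses no information about $A$ beyond the pair symmetry.

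\textbf{Second partial trace.} Substituting the same choices together with $B_{[i\ell]}=\frac16V_{[i\ell]}$ gives
\begin{align*}
A^\sharp_{ijj\ell}=V_{(i\ell)}-U_{i\ell}+\tfrac34\tr(U)\delta_{i\ell}.
\end{align*}
This is where the geometric input enters. I will invoke Proposition~\ref{prop:residual-charge-vanishes}, which uses the Bianchi gauge and scalar flatness, to get $\bigl(V_{(i\ell)}-U_{i\ell}\bigr)_0=0$. Hence $V_{(i\ell)}-U_{i\ell}$ is pure trace. Its trace is $\tr(V)-\tr(U)$, so
\begin{align*}
V_{(i\ell)}-U_{i\ell}=\tfrac14\bigl(\tr(V)-\tr(U)\bigr)\delta_{i\ell},
\qquad
A^\sharp_{ijj\ell}=\tfrac14\bigl(\tr(V)+2\tr(U)\bigr)\delta_{i\ell}=\lambda(A)\,\delta_{i\ell}.
\end{align*}
In particular this trace is symmetric, consistent with the relaxed target $A^\sharp_{ijj\ell}=A^\sharp_{\ell jji}$.

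\textbf{Traces of $\Xi(\mu)$.} Contracting \eqref{eq:Xi-def} in dimension $4$:
\begin{align*}
\Xi(\mu)_{ppk\ell}&=-\tfrac{4\mu}{9}\delta_{k\ell}+\tfrac{4\mu}{9}\delta_{k\ell}=0,\\
\Xi(\mu)_{ijj\ell}&=-\tfrac{\mu}{9}\delta_{i\ell}+\tfrac{2\mu}{9}\bigl(\delta_{i\ell}+4\delta_{i\ell}\bigr)=\mu\,\delta_{i\ell}.
\end{align*}
Subtracting with $\mu=\lambda(A)$ in the definition \eqref{eq:separate-first} gives $W(A)_{ppk\ell}=0$ and $W(A)_{ijj\ell}=0$.

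The only nontrivial step is the reduction of $A^\sharp_{ijj\ell}$ to a pure-trace tensor. That reduction rests entirely on Proposition~\ref{prop:residual-charge-vanishes}, together with the symmetry $V_{i\ell}=V_{\ell i}$ coming from \eqref{eq:bianchi-trace-identity}. Everything else is bookkeeping of the coefficients in Lemma~\ref{lem:Psi4-trace-identities}, where the index contractions in dimension $4$ need care.
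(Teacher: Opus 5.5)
Your proof is correct and follows the paper's argument. Both compute the partial traces from Lemma~\ref{lem:Psi4-trace-identities} with the choice \eqref{eq:solution-B}, use $\Xi(\mu)_{ppk\ell}=0$ and $\Xi(\mu)_{ijj\ell}=\mu\,\delta_{i\ell}$, and close with Proposition~\ref{prop:residual-charge-vanishes}. The only difference is cosmetic: you use the stated conclusion $\bigl(V_{(i\ell)}-U_{i\ell}\bigr)_0=0$ directly, so the symmetry of $V$ you mention is not needed, since $B_{[i\ell]}=\frac16V_{[i\ell]}$ already removes the skew part; the paper instead substitutes $U_0=0$ and $V_{i\ell}=V_{\ell i}$ to reduce to $(V_{i\ell})_0=0$.
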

	
	\begin{proof}
		The map $\Xi$ possesses the following partial trace property:
		\begin{align*}
			\Xi(\mu)_{ppk\ell}=0,
			\qquad
			\Xi(\mu)_{ijj\ell}=\mu\,\delta_{i\ell},
		\end{align*}
		hence $W(A)_{ppk\ell}=0$ holds obviously. As for the second property $W(A)_{ijj\ell}=0$, we compute
		\begin{align*}
			W(A)_{ijj\ell}
			&=A_{ijj\ell}+\Psi_4(B)_{ijj\ell}-\Xi\bigl(\lambda(A)\bigr)_{ijj\ell}
			\\[3pt]
			&=V_{i\ell}-4B_{(i\ell)}-6B_{[i\ell]}-\tr(B)\delta_{i\ell}-\lambda(A)\delta_{i\ell}
			\\
			&=V_{i\ell}-\frac{\tr(V)}{4}\delta_{i\ell}=(V_{i\ell})_0=0,
		\end{align*}
		where we used Proposition~\ref{prop:residual-charge-vanishes} in the last step. Proof complete.
	\end{proof}
	
	\medskip
	
	At this stage, after separating out the obstruction term $\lambda(A)=\tfrac14(\tr(V)+2\,\tr(U))$ as in \eqref{eq:separate-first}, the remainder tensor $W(A)$ already satisfies the two partial trace-free conditions. In other words, recalling the goal we set at the beginning of this section—to trace the three properties required of the Weyl-type tensor in Proposition~\ref{prop:Wtilde-eq}—we have so far successfully traced the first two properties from the leading coefficient $A$ of the rough expansion: pair symmetry and partial trace-free. Therefore, in what follows, we proceed to trace the remaining third property from $W(A)$: the first Bianchi identity.
	
	\medskip
	
	For convenience, we introduce the notation $T_{\cyc}$ to denote the cyclic sum $T_{ijk\ell} + T_{ik\ell j} + T_{i\ell jk}$ associated with a tensor $T$. We now compute $W(A)_{\cyc}$ using the decomposition~\eqref{eq:separate-first}, i.e. 
	\begin{align}\label{eq:decompose-WA-cyc}
		W(A)_{\cyc}=A^\sharp_{\cyc}-\Xi(\lambda(A))_{\cyc}.
	\end{align}
	We need to compute terms in RHS of \eqref{eq:decompose-WA-cyc} separately.
	
	It's worthy to notice that, Proposition~\ref{prop:residual-charge-vanishes} implies both $U$ and $V$ are pure-trace, i.e.
	\begin{align*}
		U_{k\ell}=\frac{\tr(U)}{4}\delta_{k\ell};
		\qquad
		V_{i\ell}=\frac{\tr(v)}{4}\delta_{i\ell}.
	\end{align*}
	
	For $\Xi(\lambda(A))_{\cyc}$, using Definition~\ref{eq:Xi-def},
	\begin{align}\label{eq:compute Xi(lambda)_cyc}
		\Xi(\lambda(A))_{\cyc}
		=\left( \frac{\tr(V)}{12}+\frac{\tr(U)}{6}\right)
		\left(\delta_{ik}\delta_{\ell j}
		+ \delta_{i\ell}\delta_{jk}
		+\delta_{ij}\delta_{k\ell}\right).
	\end{align}
	
	For $A^\sharp$, notice that $A^\sharp_\cyc=A_\cyc+\Psi_4(B)_\cyc$, therefore, we need to compute $A_\cyc$ and $\Psi_4(B)_\cyc$. 
	
	First, one may exploit the constraint imposed by the Bianchi gauge—namely, the algebraic identity \eqref{eq:bianchi-tensor-form}—to rewrite $A_{\cyc}$; that is,
	\begin{align}\label{eq:compute A_cyc}
		\begin{split}
			A_\cyc=\left( \frac{\tr(V)}{8}+\frac{\tr(U)}{16}\right) \left( \delta_{jk}\delta_{i\ell}
			+\delta_{j\ell}\delta_{ik}+\delta_{k\ell}\delta_{ij}\right).
		\end{split}
	\end{align}
	
	Similarly, the Definition~\ref{eq:Psi4-def} reads
	\begin{align*}
		\Psi_4(B)_\cyc
		=
		-2\Bigl(\delta_{k\ell}B_{[ij]}
		+\delta_{j\ell}B_{[ik]}+\delta_{jk}B_{[i\ell]}\Bigr)-2\Bigl(
		\delta_{i\ell}B_{(jk)}+\delta_{ik}B_{(j\ell)}+\delta_{ij}B_{(k\ell)}\Bigr)
	\end{align*}
	then using the ``solution'' $B$ (defined in \eqref{eq:solution-B}), $V_{[ij]}=0$ and $U_{ij}=\tfrac{\tr(U)}{4}\delta_{ij}$, above expression can be reformulated as
	\begin{align}\label{eq:compute-Psi4B-cyc}
		\Psi_4(B)_\cyc
		=\left( -\frac{\tr(U)}{8}-\frac{\tr(V)}{2}\right) \Bigl(\delta_{i\ell}\delta_{jk}
		+\delta_{ik}\delta_{j\ell}+\delta_{ij}\delta_{k\ell}\Bigr).
	\end{align}
	
	Combining \eqref{eq:compute A_cyc} with \eqref{eq:compute-Psi4B-cyc} yields
	\begin{align}\label{eq:compute-Asharp-cyc}
		A^\sharp_\cyc
		=
		\left(-\frac{\tr(U)}{16}-\frac{3\tr(V)}{8} \right) \Bigl(\delta_{i\ell}\delta_{jk}
		+\delta_{ik}\delta_{j\ell}+\delta_{ij}\delta_{k\ell}\Bigr)
	\end{align}
	
	Plugging \eqref{eq:compute-Asharp-cyc}, \eqref{eq:compute Xi(lambda)_cyc} into \eqref{eq:decompose-WA-cyc} gives
	\begin{align}\label{eq:compute-Wcyc}
		W(A)_{cyc}
		=
		\left(-\frac{11\tr(U)}{48}-\frac{22\tr(V)}{48} \right) \Bigl(\delta_{i\ell}\delta_{jk}
		+\delta_{ik}\delta_{j\ell}+\delta_{ij}\delta_{k\ell}\Bigr)=0,
	\end{align}
	here we used $\tr(U)+2\tr(V)=0$.
	
	Therefore, in view of \eqref{eq:compute-Wcyc}, plugging the properties $W(A)_{ppk\ell}=0,
	W(A)_{ijj\ell}=0$ previously obtained, we deduce that $W(A)$ is a Weyl-type tensor, i.e. $W(A)\in \widetilde W$, thanks to Proposition~\ref{prop:Wtilde-eq}. So far, we have accomplished the goal set forth at the beginning of this section: to track the Weyl-type tensor from the leading coefficient $A$ of the rough expansion \eqref{eq:h-leading-cartesian}, based on the three equivalent properties established in Proposition~\ref{prop:Wtilde-eq}. In other words, after performing the coordinate transformation $x_i=y_i+B_{ij}\frac{y_j}{|y|^2}$ based on \eqref{eq:solution-B}, the leading coefficient $A^\sharp$ admits the splitting:
	\begin{align}\label{eq:Asharp-splitting}
		A^\sharp=s\left( W_\infty\right) +\Xi\bigl(\lambda(A)\bigr),
	\end{align}
	where $W_\infty$ is a Weyl tensor, the symmetrization map $s$ defined in \eqref{eq:def-s-map}, and $\lambda(A)=\tfrac14(\tr(V)+2\tr(U))$ to be determined. For convenience, we shall refer to these two terms as the Weyl-type term and the scalar term, respectively.

	\subsection{The scalar $\lambda(A)$ and the ALE ADM mass}
	
	At the end of the previous subsection we obtained the splitting \eqref{eq:Asharp-splitting}, in which the geometric interpretation of $s(\Weyl_\infty(A))$ is clear. We still needs to identify the scalar quantity $\lambda(A)$.
	
	For ALE metrics, the ADM mass is the basic scalar invariant defined from the asymptotic behavior of the metric at infinity. It is therefore natural to compare the scalar part $\lambda(A)$ in \eqref{eq:Asharp-splitting} with the ADM mass. We first recall the definition of the ALE ADM mass.
	
	On the universal cover of an ALE end modeled on $\R^4/\Gamma$, let $S_r\subset\R^4$ denote the Euclidean sphere of radius $r$, with outward unit normal $\nu$ and Euclidean surface measure $d\mu$. In dimension $4$, write $\omega_3:=|S^3|=2\pi^2$, and define the ADM mass by
	\begin{align}\label{eq:mass-def-prep}
		m_{\ADM}(g):=\frac{1}{6\,\omega_3\,|\Gamma|}\lim_{r\to\infty}\int_{S_r}
		\bigl(\partial_j g_{ij}-\partial_i g_{jj}\bigr)\,\nu_i\,d\mu,
	\end{align}
	whenever the limit exists. Under the asymptotic model produced above, the metric already has a
	Cartesian $r^{-2}$ leading term, so the integrand in \eqref{eq:mass-def-prep} is of order $O(r^{-3})$.
	
	Actually, in our ALE geometry, given the asymptotic expansion of the metric $g_{ij}(x)=\delta_{ij}+h_{ij}(x)$, where
	\begin{align*}
		h_{ij}(x)=A^\sharp_{ijk\ell}\tfrac{x_kx_\ell}{r^4}+\mathcal O_\infty(r^{-3}),
	\end{align*}
	a straightforward fact is that, the ADM mass is determined by the $r^{-2}$ term alone:
	\begin{align}\label{eq:mass-reduce-to-h2}
		m_{\ADM}(g)
		=
		\frac{1}{6\,\omega_3\,|\Gamma|}
		\lim_{r\to\infty}\int_{S_r}
		\Bigl(
		\partial_j\Bigl(A^\sharp_{ijk\ell}\frac{x_kx_\ell}{r^4}\Bigr)
		-
		\partial_i\Bigl(A^\sharp_{jjk\ell}\frac{x_kx_\ell}{r^4}\Bigr)
		\Bigr)\nu_i\,dS.
	\end{align}
	This reduces the problem to the homogeneous coefficient $A^\sharp$ itself.
	Motivated by \eqref{eq:mass-reduce-to-h2}, for any constant tensor
	$A^\sharp_{ijk\ell}$ satisfying pair-symmetry, i.e. $A^\sharp_{ijk\ell}=A^\sharp_{jik\ell}=A^\sharp_{ij\ell k}$, we denote $A^\sharp_{ijk\ell}\tfrac{x_kx_\ell}{r^4}$ by $h^{(2)}[A^\sharp]_{ij}(x)$, and introduce the linear functional
	\begin{align}\label{eq:mass-functional}
		\mathfrak m(A^\sharp)
		:=
		\frac{1}{6\,\omega_3\,|\Gamma|}
		\lim_{r\to\infty}\int_{S_r}
		\bigl(
		\partial_j h^{(2)}[A^\sharp]_{ij}
		-
		\partial_i h^{(2)}[A^\sharp]_{jj}
		\bigr)\nu_i\,dS.
	\end{align}
	For the tensor $A^\sharp$ arising from the ALE expansion of $g$,
	\eqref{eq:mass-reduce-to-h2} is exactly the statement that
	\begin{align}\label{eq:mass-as-functional}
		m_{\ADM}(g)=\mathfrak m(A^\sharp).
	\end{align}
	Thus the remaining task is to compare the two scalar quantities $\mathfrak m(A^\sharp)$ and $\lambda(A)$. The key point is that $\mathfrak m$ is linear and invariant under the natural
	$\mathrm O(4)$-action on the space of pair-symmetric tensors. Therefore
	$\mathfrak m$ can only depend on the $\mathrm O(4)$-invariant part of
	$A^\sharp$. We use the standard classification of isotropic fourth-order tensors
	with minor symmetries; see \cite[(5.1)--(5.3)]{Moakher}.
	
	\begin{lemma}\label{lem:isotropic-pairsym}
		Let $V=\R^{4}$ with the standard Euclidean metric $\gE$, and let $\mathcal T$
		be the vector space of constant $(0,4)$-tensors $A^\sharp_{ijk\ell}$ satisfying $A^\sharp_{ijk\ell}=A^\sharp_{jik\ell}=A^\sharp_{ij\ell k}$. Equip $\mathcal T$ with the natural $\mathrm O(4)$-action
		\begin{align*}
			(A\cdot A^\sharp)_{ijk\ell}:=A_{ai}A_{bj}A_{ck}A_{d\ell}\,A^\sharp_{abcd}.
		\end{align*}
		Then the fixed subspace $\mathcal T^{\mathrm O(4)}$ is two-dimensional, and
		\begin{align*}
			A^\sharp\in \mathcal T^{\mathrm O(4)}
			\quad\Longleftrightarrow\quad
			A^\sharp_{ijk\ell}
			=
			\alpha\,\delta_{ij}\delta_{k\ell}
			+
			\beta\,(\delta_{ik}\delta_{j\ell}+\delta_{i\ell}\delta_{jk})
		\end{align*}
		for some $\alpha,\beta\in\R$. In particular,
		$\delta_{ij}\delta_{k\ell}$ and $\delta_{ik}\delta_{j\ell}+\delta_{i\ell}\delta_{jk}$ form a basis of
		$\mathcal T^{\mathrm O(4)}$.
	\end{lemma}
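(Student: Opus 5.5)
The plan is to reduce the lemma to the classical first fundamental theorem of invariant theory for $\mathrm O(4)$: every $\mathrm O(4)$-invariant $4$-linear form on $\R^4$ is a linear combination of products of Kronecker deltas. For four indices there are three complete pairings,
\begin{align*}
	\delta_{ij}\delta_{k\ell},\qquad \delta_{ik}\delta_{j\ell},\qquad \delta_{i\ell}\delta_{jk}.
\end{align*}
So $\mathcal T^{\mathrm O(4)}\subset \mathrm{span}\{\delta_{ij}\delta_{k\ell},\delta_{ik}\delta_{j\ell},\delta_{i\ell}\delta_{jk}\}$. The converse inclusion for the claimed two-dimensional family is immediate, since each product of deltas is invariant under orthogonal matrices ($A_{ai}A_{bi}=\delta_{ab}$).

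To avoid quoting invariant theory in full, I would give a self-contained argument. Take $A^\sharp\in\mathcal T^{\mathrm O(4)}$ and use invariance under the diagonal sign changes $x_a\mapsto -x_a$. These kill every component $A^\sharp_{ijk\ell}$ in which some index value appears an odd number of times. The surviving components are of type $A^\sharp_{aaaa}$, $A^\sharp_{aabb}$, $A^\sharp_{abab}$ and $A^\sharp_{abba}$ with $a\neq b$.

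Invariance under permutation matrices then makes each type independent of $a,b$. Write the common values as $c_1=A^\sharp_{aaaa}$, $c_2=A^\sharp_{aabb}$, and $c_3=A^\sharp_{abab}=A^\sharp_{abba}$; the last two agree by the symmetry $A^\sharp_{ij\ell k}=A^\sharp_{ijk\ell}$. Hence $A^\sharp$ equals
\begin{align*}
	c_2\,\delta_{ij}\delta_{k\ell}+c_3(\delta_{ik}\delta_{j\ell}+\delta_{i\ell}\delta_{jk})+(c_1-c_2-2c_3)\,E_{ijk\ell},
\end{align*}
where $E_{ijk\ell}=1$ iff $i=j=k=\ell$ (the ``cubic'' invariant).

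The remaining, and main, step is to show that invariance under a rotation that is not a signed permutation forces $c_1=c_2+2c_3$. I would use the rotation by $\pi/4$ in the $(x_1,x_2)$-plane. Evaluating the quartic form $A^\sharp_{ijk\ell}v_iv_jv_kv_\ell$ on $v=e_1$ and on $v=(e_1+e_2)/\sqrt2$ gives $c_1$ and $\tfrac12(c_1+c_2+2c_3)$ respectively. Here $\sum_a v_a^4=\tfrac12$, so the $E$-term contributes $\tfrac12(c_1-c_2-2c_3)$ while the isotropic terms contribute $c_2+2c_3$.

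Invariance forces these two values to be equal, hence $c_1=c_2+2c_3$. This is the key computation, though it is routine. Evaluating only the quartic form is legitimate because the coefficient of $E$ is what we test, and $E$ is visible on such $v$.

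Finally, the two tensors $\delta_{ij}\delta_{k\ell}$ and $\delta_{ik}\delta_{j\ell}+\delta_{i\ell}\delta_{jk}$ are linearly independent. Their $(1122)$ components are $(1,0)$ and their $(1212)$ components are $(0,1)$. Therefore $\dim\mathcal T^{\mathrm O(4)}=2$, with the stated basis. Alternatively, one may simply cite \cite[(5.1)--(5.3)]{Moakher} as the paper does.
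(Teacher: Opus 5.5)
Your argument is correct, but it takes a different route from the paper. The paper gives no proof of this lemma at all. It simply invokes the classification of isotropic fourth-order tensors with minor symmetries in \cite[(5.1)--(5.3)]{Moakher}, which ultimately rests on the first fundamental theorem of invariant theory for $\mathrm O(n)$.

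You prove the statement by hand, in two stages.
\begin{enumerate}
\item Invariance under sign changes and permutation matrices (the signed permutation group) cuts $\mathcal T^{\mathrm O(4)}$ down to the three-parameter family spanned by $\delta_{ij}\delta_{k\ell}$, $\delta_{ik}\delta_{j\ell}+\delta_{i\ell}\delta_{jk}$ and the cubic tensor $E$.
\item A single rotation by $\pi/4$ then kills the $E$-coefficient through $c_1=\tfrac12(c_1+c_2+2c_3)$. Testing this on the quartic form $Q(v)=A^\sharp_{ijk\ell}v_iv_jv_kv_\ell$ is legitimate because $R\cdot A^\sharp=A^\sharp$ gives $Q(Rv)=Q(v)$ directly from the definition of the action. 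That is the real justification, more than ``$E$ is visible on such $v$''.
\end{enumerate}

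What each approach buys:
\begin{itemize}
\item The citation is shorter and places the lemma inside the general theory of isotropic tensors.
\item Your proof is self-contained and shows where full orthogonal invariance is needed. The signed permutation group alone leaves a three-dimensional invariant space, since $E$ is invariant under it, so one non-monomial rotation is exactly what cuts this to two dimensions.
\item Your single sign flips have determinant $-1$, so you genuinely use $\mathrm O(4)$ rather than $\mathrm{SO}(4)$, which is what the lemma assumes.
\item Your argument works verbatim in any dimension $n\ge 2$.
\end{itemize}

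For completeness, add the trivial check that both basis tensors satisfy the pair symmetries, so they actually lie in $\mathcal T$.
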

	
	\medskip
	
	The previous lemma describes all $\mathrm O(4)$-invariant tensors in the ambient
	space $\mathcal T$. In our situation, however, the tensors arising from the ALE
	expansion satisfy one more algebraic condition, namely $A^\sharp_{ppk\ell}=0$. We therefore restrict to the subspace
	\begin{align*}
		\mathcal V
		:=
		\bigl\{
		A^\sharp\in\mathcal T:\ A^\sharp_{ppk\ell}=0
		\bigr\}.
	\end{align*}
	
	Our goal is to compute the scalar quantity $\mathfrak m(A^\sharp)$, as claimed before. Since $\mathfrak m$ is linear and $\mathrm O(4)$-invariant, only the
	invariant component of $A^\sharp$ can contribute to its value. We isolate this
	component by the standard compact-group averaging procedure \cite{FultonHarris}.
	Define
	\begin{align*}
		P:\mathcal V\to\mathcal V,
		\qquad
		P(A^\sharp):=\int_{\mathrm O(4)}A\cdot A^\sharp\,dA,
	\end{align*}
	where $dA$ denotes the normalized Haar measure on $\mathrm O(4)$.
	
	The space $\mathcal V$ is preserved by the natural $\mathrm O(4)$-action, so $P$ is
	well-defined. By the left-invariance of the Haar measure, $P(A^\sharp)$ is
	$\mathrm O(4)$-invariant, and hence $P(A^\sharp)\in\mathcal V^{\mathrm O(4)}$.
	Moreover, since $\mathfrak m$ is $\mathrm O(4)$-invariant and linear,
	\begin{align*}
		\mathfrak m(A^\sharp)
		&=
		\int_{\mathrm O(4)}\mathfrak m(A\cdot A^\sharp)\,dA \\
		&=
		\mathfrak m\left(\int_{\mathrm O(4)}A\cdot A^\sharp\,dA\right)
		=
		\mathfrak m\bigl(P(A^\sharp)\bigr),
	\end{align*}
	Thus, in computing $\mathfrak m(A^\sharp)$, we may replace $A^\sharp$ by its $\mathrm O(4)$-average.
	
	It remains to understand $\mathcal V^{\mathrm O(4)}$. Since
	\begin{align*} 
		\Xi(1)_{ijk\ell} = -\frac19\,\delta_{ij}\delta_{k\ell} + \frac29\,(\delta_{ik}\delta_{j\ell}+\delta_{i\ell}\delta_{jk}).
	\end{align*}
	satisfies $\Xi(1)_{ppk\ell}=0$, we have $\Xi(1)\in\mathcal V^{\mathrm O(4)}$. Conversely, if $A^\sharp\in\mathcal V^{\mathrm O(4)}$, then by
	Lemma~\ref{lem:isotropic-pairsym} we may write
	\begin{align*}
		A^\sharp_{ijk\ell}
		=
		\alpha\,\delta_{ij}\delta_{k\ell}
		+
		\beta\,(\delta_{ik}\delta_{j\ell}+\delta_{i\ell}\delta_{jk})
	\end{align*}
	for some $\alpha,\beta\in\R$. Since $A^\sharp\in\mathcal V$, contracting in the
	first pair gives $\beta=-2\alpha$. It follows that $\mathcal V^{\mathrm O(4)}$ is
	one-dimensional, and
	\begin{align*}
		\mathcal V^{\mathrm O(4)}=\R\,\Xi(1).
	\end{align*}
	
	Consequently, for every $A^\sharp\in\mathcal V$, there exists a unique scalar
	$c(A^\sharp)\in\R$ such that
	\begin{align*}
		P(A^\sharp)=c(A^\sharp)\,\Xi(1).
	\end{align*}
	Therefore
	\begin{align}\label{eq:compute-map-m}
		\mathfrak m(A^\sharp)
		=
		\mathfrak m\bigl(P(A^\sharp)\bigr)
		=
		c(A^\sharp)\,\mathfrak m(\Xi(1)).
	\end{align}
	To identify $\mathfrak m(A^\sharp)$, or the ALE ADM mass $m_\ADM(g)$ in view of \eqref{eq:mass-as-functional}, it remains to compute the
	coefficient $c(A^\sharp)$ and the number $\mathfrak m(\Xi(1))$.
	
	\begin{itemize}
		\item \textit{the computation of $c(A^\sharp)$:} The full scalar contraction
		$A^\sharp_{ijji}$ is also $\mathrm O(4)$-invariant:
		\begin{align*}
			(A\cdot A^\sharp)_{ijji}
			=
			A_{ai}A_{bj}A_{cj}A_{di}\,A^\sharp_{abcd} 
			=
			\delta_{bc}\delta_{ad}\,A^\sharp_{abcd}
			=
			A^\sharp_{abba}.
		\end{align*}
		Hence
		\begin{align*}
			A^\sharp_{ijji}
			=
			\bigl(P(A^\sharp)\bigr)_{ijji}
			=
			c(A^\sharp)\,\Xi(1)_{ijji}
			=
			4\,c(A^\sharp).
		\end{align*}
		Therefore, since 
		\begin{align*}
			\lambda(A)=\frac14\Bigl(\tr(V)+2\tr(U)\Bigr)=\frac14 A^\sharp_{ijji},
		\end{align*}
		then
		\begin{align*}
			c(A^\sharp)=\lambda(A).
		\end{align*}
		\item the computation of $\mathfrak m(\Xi(1))$
		
		From the formula for $\Xi(1)$ we get
		\begin{align*}
			\partial_j h^{(2)}[\Xi(1)]_{ij}
			=
			\frac23\,x_ir^{-4},
			\qquad
			\partial_i h^{(2)}[\Xi(1)]_{jj}=0.
		\end{align*}
		Substituting into the definition \eqref{eq:mass-functional} for $\mathfrak m$, we find
		\begin{align*}
			\mathfrak m(\Xi(1))
			=
			\frac{1}{6\,\omega_3\,|\Gamma|}\cdot\frac23\,\omega_3
			=
			\frac{1}{9\,|\Gamma|}.
		\end{align*}
	\end{itemize}
	Combining these two computations into \eqref{eq:compute-map-m} yields
	\begin{align*}
		m_\ADM=\mathfrak m(A^\sharp)=\frac{1}{9\,|\Gamma|}\,\lambda(A).
	\end{align*}
	
	We now summarize the current computation in the following proposition, which clarifies the connection between the scalar term $\Xi\bigl(\lambda(A)\bigr)$ in the splitting \eqref{eq:Asharp-splitting} and the ALE ADM mass.
	
	\begin{proposition}\label{prop:lambda-mass}
		In the splitting \eqref{eq:Asharp-splitting} we derived before, the scalar term $\Xi(\lambda)$ is actually determined by the ALE ADM mass of the end. More precisely, \eqref{eq:Asharp-splitting} can be further rewritten as
		\begin{align}\label{eq:Asharp-splitting-version-2}
			A^\sharp
			=
			\Xi\bigl(9|\Gamma|m_\ADM\bigr)
			+
			s\bigl(\Weyl_\infty(A)\bigr).
		\end{align}
	\end{proposition}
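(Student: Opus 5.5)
The plan is to combine the splitting \eqref{eq:Asharp-splitting}, $A^\sharp=s(W_\infty)+\Xi(\lambda(A))$, with the mass identity just derived. First, I would reconfirm that $A^\sharp$ lies in the subspace $\mathcal V$, so that the averaging argument applies. By construction of $B$ in \eqref{eq:solution-B}, $A^\sharp_{ppk\ell}=0$ (Proposition~\ref{prop:W-properties} together with $\Xi(\mu)_{ppk\ell}=0$). Pair symmetry is inherited from $A$ and $\Psi_4(B)$. Hence $P(A^\sharp)=c(A^\sharp)\Xi(1)$ and \eqref{eq:compute-map-m} applies.

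Second, I would check the identification $c(A^\sharp)=\lambda(A)$ directly from the splitting rather than from the trace bookkeeping. The full contraction $A^\sharp_{ijji}$ splits as $s(W_\infty)_{ijji}+\Xi(\lambda)_{ijji}$. By Proposition~\ref{prop:Wtilde-eq}(ii), $s(W_\infty)$ has vanishing partial traces, so the first term vanishes. A direct contraction gives $\Xi(\lambda)_{ijji}=4\lambda$. Therefore $A^\sharp_{ijji}=4\lambda(A)$, and $c(A^\sharp)=\lambda(A)$. This cross-check is the step I would treat most carefully. The earlier displayed formula $\lambda(A)=\tfrac14 A^\sharp_{ijji}$ mixes traces of $A$ with those of $A^\sharp$, and verifying it through the splitting avoids any sign slip.

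Third, I would use $\mathfrak m(\Xi(1))=1/(9|\Gamma|)$ and \eqref{eq:mass-as-functional} to conclude $m_{\ADM}(g)=\lambda(A)/(9|\Gamma|)$, i.e. $\lambda(A)=9|\Gamma|m_{\ADM}$. For this I also need that the ADM mass is unchanged by the coordinate change $x=y+B y/|y|^2$. That change is $\mathcal O_\infty(r^{-1})$, and the standard invariance of the ADM mass under asymptotically trivial diffeomorphisms (Bartnik-type, or directly because $\mathfrak m(\Psi_4(B))$ is the flux of a divergence-free-type term) shows the limit is computed equally from $A$ or $A^\sharp$. Equivalently, I could note that $\mathfrak m(\Psi_4(B))=0$, which follows from a direct calculation: $\partial_j(\Lie_u\gE)_{ij}-\partial_i(\Lie_u\gE)_{jj}=\DeltaE u_i-\partial_i\partial_j u_j$, whose flux through $S_r$ is the integral of the divergence of an antisymmetric-curl expression and tends to zero.

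Finally, substituting $\lambda(A)=9|\Gamma|m_{\ADM}$ into \eqref{eq:Asharp-splitting} gives \eqref{eq:Asharp-splitting-version-2}, with $\Weyl_\infty(A):=W_\infty=-\tfrac13\mathbf R\,W(A)$ from Proposition~\ref{prop:Wtilde-eq}. The main obstacle I anticipate is the invariance of $\mathfrak m$ under $A\mapsto A+\Psi_4(B)$. I would prove it by explicitly computing the flux of the $\Psi_4(B)$ term with $\int_{S^3}\theta_k\theta_\ell=\tfrac{\omega_3}{4}\delta_{k\ell}$ and checking that it cancels.
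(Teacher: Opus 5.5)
Your proposal is correct and follows essentially the same route as the paper. Both arguments use $\mathrm O(4)$-averaging on $\mathcal V$ to reduce $\mathfrak m(A^\sharp)$ to $c(A^\sharp)\,\mathfrak m(\Xi(1))$, identify $c(A^\sharp)=\lambda(A)$ through the full contraction $A^\sharp_{ijji}$, and compute $\mathfrak m(\Xi(1))=1/(9|\Gamma|)$.

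Your two additions are valid and make explicit points the paper passes over quickly. The first is reading off $A^\sharp_{ijji}=4\lambda(A)$ from the splitting. The second is checking $\mathfrak m(\Psi_4(B))=0$. In fact the flux of $\DeltaE u_i-\partial_i\partial_j u_j$ vanishes exactly on each $S_r$, not just in the limit, because it is the divergence of the antisymmetric tensor $\partial_j u_i-\partial_i u_j$; note that divergence-freeness alone would not be enough.
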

	
	\medskip
	
	Ultimately, the true identity of the unknown term $\Xi(\lambda(A))$ in the splitting of the leading coefficient $A^\sharp$ has been identified. This suggests a potential connection between the ALE geometric background under consideration and General Relativity.
	
	\subsection{A Schwarzschild background and a relative repair}
	\label{subsec:Schw-anchored-repair}
	
	After performing the coordinate elimination, the homogeneous $r^{-2}$
	coefficient in the rough expansion \eqref{eq:h-leading-cartesian} has been reduced to the form $A^\sharp$. However, the Euclidean Bianchi gauge obtained earlier is in
	general no longer preserved. It is therefore natural to ask whether one can repair
	the gauge once more, as was done in the Ricci-flat case in \cite{WY},
	
	Unfortunately, a direct Euclidean Bianchi correction is not suitable here. Indeed, by
	\eqref{eq:Asharp-splitting-version-2}, the obstruction is exactly the scalar term $\Xi(\lambda)$, where
	$\lambda:=\lambda(A)=9|\Gamma|m_{\ADM}(g)$. Trying to remove it directly produces a correction vector field of order $r^{-1}$,
	whose Lie derivative changes the homogeneous $r^{-2}$ term. So this repair does not preserve $A^\sharp$. We therefore proceed differently. We will absorb the scalar part of
	$A^\sharp$ into a reference Schwarzschild metric, and then impose a relative gauge with respect to
	this background.
	
	\medskip
	
	Now we introduce the reference Schwarzschild metric. Set $m_{\Sch}:=\frac{\lambda}{9}=|\Gamma|m_{\ADM}(g)$, consider the isotropic Schwarzschild metric on the cover $\R^4\setminus B_R$,
	\begin{align*}
		U(x):=1+\frac{\lambda}{18\,r^2}
		=
		1+\frac{m_{\Sch}}{2\,r^2},
		\qquad
		g^{\Sch}:=U^2\gE.
	\end{align*}
	Since $\DeltaE(r^{-2})=0$ on $\R^4\setminus\{0\}$, the metric $g^{\Sch}$ is
	scalar-flat on every exterior region.
	
	To match the form $h^{(2)}[\Xi(\lambda)]$, we apply the radial
	change of variables $\Phi^\Sch(x):=x-|\Gamma|m_{\ADM}(g)\frac{x}{r^2}$, and define $g^{\Sch,\sharp}:=(\Phi^\Sch)^*g^{\Sch}$. Then
	\begin{align}\label{eq:gSch-sharp-expansion}
		g^{\Sch,\sharp}
		=
		\gE+h^{(2)}[\Xi(\lambda)]+\mathcal O_\infty(r^{-4}),
	\end{align}
	where, for any constant pair-symmetric $(0,4)$-tensor $A$, $h^{(2)}[A]_{ij}(x):=A_{ijk\ell}\tfrac{x_kx_\ell}{r^4}$.
	
	From the construction, $g^{\Sch,\sharp}$ encodes the mass and acts as the reference metric naturally associated with the end. Consequently, it provides the background for the final relative gauge.
	
	With the reference metric established, we define the relative gauge considered in this paper as follows:
	
	\begin{definition}\label{def:Schw-relative-gauge}
		For two metrics $g,\bar g$ on $\Omega_R$, define
		\begin{align*}
			\mathscr V(g,\bar g)^k
			:=
			g^{ij}\Bigl(\Gamma(g)^k_{ij}-\Gamma(\bar g)^k_{ij}\Bigr).
		\end{align*}
		We say that $g$ is in relative gauge with respect to $g^{\Sch,\sharp}$ if
		\begin{align*}
			\mathscr V\left( g,g^{\Sch,\sharp}\right) =0
			\qquad\text{on }\Omega_R.
		\end{align*}
	\end{definition}
	
	\medskip
	
	Just as in the construction of the Euclidean Bianchi gauge, we reformulate the problem of finding coordinates in the relative gauge as an equation for a vector field. Let $\widetilde{g}$ be the metric resulting from the coordinate transformation, where the coefficient of the $-2$-order term is given by $A^\sharp$. Let $\varphi(x)=x+w(x)$ with $w\to0$ at infinity, and set
	$g:=\varphi^*\widetilde g$. To require that $g$ be in relative gauge with respect to $g^{\Sch,\sharp}$ is exactly to require
	\begin{align*}
		\mathscr V\left( \varphi^*\widetilde g,g^{\Sch,\sharp}\right) =0
		\qquad\text{on }\Omega_R.
	\end{align*}
	We therefore define
	\begin{align*}
		\mathscr F_{\Sch}(w):=\mathscr V\left( \varphi^*\widetilde g,g^{\Sch,\sharp}\right),
	\end{align*}
	and seek $w$ as a zero of the mapping $\mathscr F_{\Sch}$.
	
	The strategy remains the same: we linearize $\mathscr F_{\Sch}$ at $w=0$, isolate $\Delta_E$ as the principal part, and then solve the resulting equation by the same fixed-point argument as in the Euclidean Bianchi gauge-fixing.
	
	For arbitrary fixed $\alpha\in(4,\infty)\setminus\N$ and $\sigma\in(0,1)$, there exist
	$R$ sufficiently large and $\varepsilon>0$ sufficiently small such that, for every
	$w\in \mathcal X_{\alpha+1,\,1+\sigma}(\Omega_R)$ with
	$\|w\|_{\mathcal X_{\alpha+1,\,1+\sigma}(\Omega_R)}<\varepsilon$, one has
	\begin{align*}
		\mathscr F_{\Sch}(w)
		=
		\mathscr V(\widetilde g,\bar g_{\lambda,\mathcal S})
		+
		\DeltaE w
		+
		\mathscr N_{\Sch}(w),
	\end{align*}
	where $\mathscr N_{\Sch}$ satisfies the bounds
	\begin{align*}
		\|\mathscr N_{\Sch}(w)\|_{\mathcal X_{\alpha-1,\,3+\sigma}(\Omega_R)}
		&\le
		C\Bigl(
		R^{-2}\,\|w\|_{\mathcal X_{\alpha+1,\,1+\sigma}(\Omega_R)}
		+
		R^{-(2+\sigma)}\,\|w\|_{\mathcal X_{\alpha+1,\,1+\sigma}(\Omega_R)}^2
		\Bigr),
	\end{align*}
	and
	\begin{align*}
		\|\mathscr N_{\Sch}(w_1)-\mathscr N_{\Sch}(w_2)\|_{\mathcal X_{\alpha-1,\,3+\sigma}(\Omega_R)}
		\le
		C\Bigl(
		R^{-2}
		+
		R^{-(2+\sigma)}
		(\|w_1\|+\|w_2\|)
		\Bigr)\,
		\|w_1-w_2\|_{\mathcal X_{\alpha+1,\,1+\sigma}(\Omega_R)}.
	\end{align*}
	The desired gauge repair then follows from the same fixed-point argument as before. We therefore omit the repetition and state the resulting proposition.
	
	\begin{proposition}\label{prop:Schw-anchored-repair}
		There exists $\sigma\in(0,1)$, $\alpha\in(4,\infty)\setminus\N$, $R$ sufficiently large and $w\in\mathcal X_{\alpha+1,1+\sigma}(\Omega_R)$, such that $\varphi(x)=x+w(x)$ is a $C^1$-diffeomorphism of $\Omega_R$ onto its image,
		$\varphi(\Omega_R)\subset\Omega_{R/2}$, and
		\begin{align*}
			\mathscr V\left(\varphi^*\widetilde g,g^{\Sch,\sharp}\right)=0
			\qquad\text{on }\Omega_R.
		\end{align*}
		Moreover,
		\begin{align*}
			\|w\|_{\mathcal X_{\alpha+1,\,1+\sigma}(\Omega_R)}
			\le
			C\,\|\mathscr V(\widetilde g,g^{\Sch,\sharp})\|_{\mathcal X_{\alpha-1,\,3+\sigma}(\Omega_R)}.
		\end{align*}
		If $g:=\varphi^*\widetilde g$, then
		\begin{align*}
			g_{ij}(x)
			=
			\delta_{ij}
			+
			A^\sharp_{ijk\ell}\frac{x_kx_\ell}{r^4}
			+
			\mathcal O_\infty(r^{-2-\sigma}),
		\end{align*}
		so the homogeneous $r^{-2}$ coefficient is unchanged.
	\end{proposition}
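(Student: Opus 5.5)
The plan is to run the same fixed-point scheme as in Proposition~\ref{prop:solve-bianchi}, now with respect to the reference metric $g^{\Sch,\sharp}$. The one genuinely new point is to check that the inhomogeneous term $\mathscr V(\widetilde g,g^{\Sch,\sharp})$ decays fast enough, namely that it lies in $\mathcal X_{\alpha-1,3+\sigma}$. Everything else is the contraction estimate recorded just before the statement.

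\textbf{Step 1: the linearization.} First I would verify that the linearization of $w\mapsto\mathscr V(\varphi^*\widetilde g,\bar g)$ at $w=0$ has principal part $\DeltaE w$. To leading order,
\begin{align*}
	\mathscr V(g,\bar g)=\BE(g-\bar g)+(\text{quadratic terms}),
\end{align*}
and $\BE(\Lie_w\gE)=\DeltaE w$, as computed in Subsection~\ref{sec:bianchi}. The perturbative contributions come from $\widetilde g-\gE=\mathcal O_\infty(r^{-2})$ and $g^{\Sch,\sharp}-\gE=\mathcal O_\infty(r^{-2})$. Multiplied against $\partial^2 w$ and $\partial w$, these give the stated bounds on $\mathscr N_{\Sch}$, with factors $R^{-2}$ coming from the $r^{-2}$ decay of the backgrounds. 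I would take these bounds as given, since they are recorded above.

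\textbf{Step 2: decay of the source term (main obstacle).} Write
\begin{align*}
	\widetilde g-g^{\Sch,\sharp}=h^{(2)}[A^\sharp-\Xi(\lambda)]+\mathcal O_\infty(r^{-3}).
\end{align*}
This uses that the coordinate change $x=y+By/|y|^2$ only perturbs the metric at order $r^{-3}$ beyond the leading coefficient, together with \eqref{eq:gSch-sharp-expansion}. By \eqref{eq:Asharp-splitting-version-2}, the leading part is $h^{(2)}[s(\Weyl_\infty)]$. Its Euclidean Bianchi operator is computed by the formula preceding \eqref{eq:rougn-bianchi-gauge} with $A=s(\Weyl_\infty)$. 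Since $s(\Weyl_\infty)$ is partially trace-free, $U=V=0$, so only the term $-4A_{ijk\ell}\theta_j\theta_k\theta_\ell$ survives. That term vanishes: $s(\Weyl_\infty)_{ijk\ell}=\tfrac12(W_{ik\ell j}+W_{i\ell kj})$, and the full symmetrization in $j,k,\ell$ of $W_{ik\ell j}$ is zero because $W$ is antisymmetric in its last two slots.

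Hence the linear part of $\mathscr V(\widetilde g,g^{\Sch,\sharp})$ comes only from the $\mathcal O_\infty(r^{-3})$ remainder and is $\mathcal O_\infty(r^{-4})$. The quadratic terms are $\mathcal O_\infty(r^{-5})$. So for any $\sigma\in(0,1)$,
\begin{align*}
	\mathscr V(\widetilde g,g^{\Sch,\sharp})\in\mathcal X_{\alpha-1,3+\sigma}.
\end{align*}
This is where the Weyl-type normalization of Subsection~\ref{subsec:canonical-splitting} is essential. Without it, the source would only be $O(r^{-3})$ and would force a correction $w$ of order $r^{-1}$, which would change $A^\sharp$.

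\textbf{Step 3: contraction.} Set
\begin{align*}
	\mathcal T(w):=-\Psi\bigl(\mathscr V(\widetilde g,g^{\Sch,\sharp})+\mathscr N_{\Sch}(w)\bigr),
\end{align*}
using Theorem~\ref{thm:YW-poisson} with the non-integer weight $\beta=1+\sigma$. Take a ball of radius $2C\|\mathscr V(\widetilde g,g^{\Sch,\sharp})\|_{\mathcal X_{\alpha-1,3+\sigma}}$, which can be made small by enlarging $R$. By the $R^{-2}$ bounds on $\mathscr N_{\Sch}$, $\mathcal T$ maps this ball into itself and is a contraction for $R$ large. Banach's theorem then gives the fixed point $w$ together with the stated estimate, and applying $\DeltaE$ to the fixed-point identity gives $\mathscr V=0$.

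\textbf{Step 4: conclusions.} Since $|w|+r|\partial w|\le C\varepsilon r^{-1-\sigma}$, for large $R$ the map $\varphi=\Id+w$ is a $C^1$-diffeomorphism onto its image with $\varphi(\Omega_R)\subset\Omega_{R/2}$. Finally,
\begin{align*}
	\varphi^*\widetilde g=\widetilde g\circ\varphi+\Lie_w\gE+\dots,\qquad \Lie_w\gE=\mathcal O_\infty(r^{-2-\sigma}),
\end{align*}
and $\widetilde g\circ\varphi-\widetilde g=\mathcal O_\infty(r^{-3-\sigma})$ by Lemma~\ref{lem:X-composition}. So the homogeneous $r^{-2}$ coefficient is still $A^\sharp$ and the remainder is $\mathcal O_\infty(r^{-2-\sigma})$.
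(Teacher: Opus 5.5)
Your proposal is correct and follows the paper's route. Like the paper, you run the same Banach fixed-point argument used for the Euclidean Bianchi gauge: $\DeltaE w$ is the principal part, $\Psi$ from Theorem~\ref{thm:YW-poisson} is applied at the non-integer weight $1+\sigma$, and you use the recorded $R^{-2}$ bounds on $\mathscr N_{\Sch}$. Your Step~2 shows that $\mathscr V(\widetilde g,g^{\Sch,\sharp})=\mathcal O_\infty(r^{-4})$ because $\BE\bigl(h^{(2)}[s(\Weyl_\infty)]\bigr)\equiv0$ (both partial traces vanish and $s(\Weyl_\infty)_{ijk\ell}\theta_j\theta_k\theta_\ell=0$); this spells out why the source lies in $\mathcal X_{\alpha-1,3+\sigma}$ with norm tending to zero as $R\to\infty$, a fact the paper leaves implicit when it omits the repetition.
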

	
	\begin{remark}
		In what follows, we will refer to a coordinate system as a \textit{preferred coordinate system} if it simultaneously satisfies the condition that the coefficient of the order $-2$ term in the asymptotic expansion of the metric at infinity is $A^\sharp$, and the relative gauge considered in Proposition~\ref{prop:Schw-anchored-repair}.
	\end{remark}
	
	\medskip
	
	Proposition~\ref{prop:Schw-anchored-repair} shows that the relative gauge can be imposed without changing the homogeneous $r^{-2}$ coefficient $A^\sharp$. So far, we have completed the proof of the first main theorem, Theorem~\ref{thm:main1}.
	
	\subsection{Rigidity and Euclidean motion at infinity}
	
	Up to this point, we have thoroughly investigated the metric at infinity, with a specific focus on the coefficient of the $-2$-order term. A basic issue remains: is the curvature information encoded in this term an intrinsic feature
	of the end, or is it still affected by the residual choice of coordinates? This is the main uniqueness problem
	in the asymptotic analysis. Unless it is resolved, the coefficient $A^\sharp$ and the quantities extracted from
	it cannot yet be regarded as geometric invariants of the end.
	
	In this subsection we show that the remaining freedom is exactly the expected one. Any two preferred
	coordinate systems on the same end differ, to leading order, by a Euclidean motion at infinity. This rigidity
	gives a precise transformation law for $A^\sharp$, and shows that the data
	$(\lambda,\Weyl_\infty)$ obtained from its canonical splitting are intrinsic, modulo the natural
	$O(4)$-action. Thus the $r^{-2}$ term is not merely a convenient normal form: it provides a canonical way
	to identify the curvature information at infinity.
	
	We begin with the rigidity statement for the transition map.
	
	\begin{theorem}\label{thm:main-rigid-bootstrapped}
		Assume $(M^4,g)$ admits two preferred coordinate systems $x$ and $y$ on the same end
		(after lifting to the universal cover if necessary), both defined on exterior domains $\Omega_R=\R^4\setminus B_R$ and $\Omega_{R'}=\R^4\setminus B_{R'}$. Let $g^{(x)}$ and $g^{(y)}$ be the coordinate expressions of $g$,
		and let
		\begin{align*}
			F:=y\circ x^{-1}:\Omega_R\to\Omega_{R'}.
		\end{align*}
		Assume that for some $\sigma\in(0,1)$ one has
		\begin{align}
			\begin{split}
				g^{(x)}_{ij}(x)
				&=
				\delta_{ij}
				+
				A^{\sharp,(x)}_{ijk\ell}\frac{x_kx_\ell}{|x|^4}
				+
				\mathcal O_\infty(|x|^{-2-\sigma}),
				\\
				g^{(y)}_{ab}(y)
				&=
				\delta_{ab}
				+
				A^{\sharp,(y)}_{abpq}\frac{y_py_q}{|y|^4}
				+
				\mathcal O_\infty(|y|^{-2-\sigma}),
			\end{split}
			\label{eq:gx-gy-decay-assumption-SR}
		\end{align}
		where $A^{\sharp,(x)}$ and $A^{\sharp,(y)}$ are constant pair-symmetric tensors. Then there exist $A\in O(4)$ and $b\in\R^4$ such that
		\begin{align}
			F(x)=Ax+b+\mathcal E(x),
			\label{eq:F-affine-bootstrapped-SR}
		\end{align}
		with
		\begin{align}
			\mathcal E(x)=\mathcal O_\infty(|x|^{-2}\log|x|),
			\qquad
			\partial^{(k)}\mathcal E(x)=\mathcal O_\infty(|x|^{-2-k}\log|x|)\quad (k\ge1).
			\label{eq:E-bootstrapped-decay-SR}
		\end{align}
		If both preferred charts are $\Gamma$-compatible, then $A$ normalizes $\Gamma$.
	\end{theorem}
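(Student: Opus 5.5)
The plan is to use the two defining properties of preferred coordinates in turn. First, the fact that $F$ is an isometry between two metrics that are $\gE+O(r^{-2})$ will show that $F$ is affine up to $O(r^{-1})$, with orthogonal linear part. Second, the relative gauge condition of Definition~\ref{def:Schw-relative-gauge}, which is a second-order elliptic equation for the coordinate functions, will upgrade the error to $O(r^{-2}\log r)$.

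\emph{Step 1 (first-order rigidity).} Since $g^{(x)}=F^*g^{(y)}$, we have
\begin{align*}
g^{(x)}_{ij}(x)=\partial_iF^a\,\partial_jF^b\,g^{(y)}_{ab}(F(x)).
\end{align*}
By \eqref{eq:gx-gy-decay-assumption-SR}, this gives $DF^{T}DF=I+O(r^{-2})$. Hence $DF$ is bounded, and it lies within $O(r^{-2})$ of $\mathrm O(4)$ pointwise. In particular $|F(x)|\sim|x|$. The transformation law of Christoffel symbols,
\begin{align*}
\partial_i\partial_jF^a=\Gamma(g^{(x)})^k_{ij}\partial_kF^a-\Gamma(g^{(y)})^a_{bc}(F)\,\partial_iF^b\partial_jF^c,
\end{align*}
then gives $\partial^2F=O(r^{-3})$. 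Differentiating this identity repeatedly gives $\partial^{k+1}F=O(r^{-2-k})$.

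Integrating $\partial^2F$ along rays and around spheres shows that $DF$ has a limit $A$ at infinity. Indeed, the oscillation of $DF$ on an annulus of radius $r$ is $O(r^{-2})$, and it is summable over dyadic annuli. We get $DF-A=O(r^{-2})$ with $A\in\mathrm O(4)$. One more radial integration gives a constant $b$ with
\begin{align*}
\mathcal E:=F-Ax-b=O(r^{-1}),\qquad \partial^k\mathcal E=O(r^{-1-k}).
\end{align*}

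\emph{Step 2 (gauge equation for $\mathcal E$).} The relative gauge in a chart $z$ says exactly that
\begin{align*}
\Delta_g z^k=-g^{ij}\Gamma(g^{\Sch,\sharp})^k_{ij}(z)=:S^k(z).
\end{align*}
Here $\Gamma(g^{\Sch,\sharp})=O(|z|^{-3})$ is $\mathrm O(4)$-equivariant and odd, because $g^{\Sch,\sharp}$ is radial. I apply this to $y=F(x)$ and to $x$. Using the chain rule for $\Delta_g$, this yields
\begin{align*}
\Delta_g\mathcal E=S^{(y)}(F(x))-A\,S^{(x)}(x).
\end{align*}
By equivariance, $S^{(y)}(Ax)=A\,S^{(x)}(x)+O(r^{-5})$, the error coming from the $g^{ij}-\delta^{ij}$ factor. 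The translation and error shifts, $|b|+|\mathcal E|$, change $S$ by at most $O(r^{-4})$. Also
\begin{align*}
\Delta_g\mathcal E-\DeltaE\mathcal E=(g^{ij}-\delta^{ij})\partial^2_{ij}\mathcal E+\Gamma*\partial\mathcal E=O(r^{-5}).
\end{align*}
Hence $\DeltaE\mathcal E=f$ with $f=\mathcal O_\infty(r^{-4})$.

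\emph{Step 3 (decay of $\mathcal E$).} Write $\mathcal E=\mathcal E_p+\mathcal E_h$. For the particular solution $\mathcal E_p$, the weight $2$ is an integer, so Theorem~\ref{thm:YW-poisson} does not apply directly. Instead I solve explicitly by spherical-harmonic decomposition, $\mathcal E_p(r,\theta)=\sum_\ell a_\ell(r)Y_\ell(\theta)$. Each mode satisfies an ODE with forcing $O(r^{-4})$, which gives $\mathcal E_p=O(r^{-2}\log r)$; the logarithm comes only from the $\ell=0$ resonance. The remainder $\mathcal E_h$ is harmonic on the exterior domain and $O(r^{-1})$. Its expansion $\sum r^{-2-\ell}Y_\ell$ therefore starts at $r^{-2}$. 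Combined with interior elliptic estimates on annuli, this gives \eqref{eq:E-bootstrapped-decay-SR}.

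I expect Step 2 to be the main obstacle: the careful accounting that every term in the source $f$ is genuinely $O(r^{-4})$. The key inputs are the equivariance of the Schwarzschild Christoffel symbols under $A$, and the fact that the translation $b$ only enters at one order lower.

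\emph{Step 4 ($\Gamma$-compatibility).} If both charts are $\Gamma$-compatible, then $F\circ\gamma=\gamma'\circ F$ for some $\gamma'\in\Gamma$. Taking the limit of $DF$ at infinity gives $A\gamma A^{-1}=\gamma'$, so $A$ normalizes $\Gamma$.
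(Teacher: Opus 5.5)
Your proof is correct, and Steps 2--4 follow the paper's architecture. Your identity $\Delta_g\mathcal E=S^{(y)}(F)-A\,S^{(x)}$ is the paper's harmonic-map equation for $F$, obtained here from linearity of $\Delta_g$ on coordinate functions. The equivariance $S^{(y)}(Ax)=A\,S^{(x)}(x)+O(r^{-5})$ is the paper's covariance $T^{(x),k}(x)A_{ak}=T^{(y),a}(Ax)$, and the $O(r^{-4})$ source coming from the shift $b+\mathcal E$ is the paper's Taylor step. Your mode-by-mode ODE analysis is a proof of Lemma~\ref{lem:poisson-exterior-affine-sigma} with $\sigma=1$, which the paper only sketches via Newton potential and Liouville. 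It also has the advantage of showing that the logarithm can only come from the resonant $\ell=0$ mode.

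The genuine difference is in Step 1. The paper uses only the trace of the isometry condition, namely the harmonic map system. It therefore needs the rescaling/Schauder argument of Lemma~\ref{lem:D2F-decay-estimate}, which gives only $|D^2F|\le C|x|^{-1}$, and then Lemma~\ref{lem:poisson-exterior-affine-sigma} with $\sigma=0$ to produce $A$ and $b$. You use the full Christoffel transformation law, i.e.\ $\nabla dF=0$, which is available precisely because $F$ is an isometry. This gives $\partial^2F=O(r^{-3})$ and all higher derivative bounds algebraically, with no elliptic estimates. You then get $A$ and $b$ by integrating along rays and around spheres. Your route is more elementary and sharper, and it directly supplies the higher-derivative control behind the $\mathcal O_\infty$ statements, which the paper's lemma records only for $D^2F$. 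The paper's trace-only route would be the natural one if $F$ were merely a harmonic map rather than an isometry, but that generality is not needed here.

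One small point of order in your Step 1. First get $DF$ and $DF^{-1}$ bounded from the uniform equivalence of $g^{(x)}$ and $g^{(y)}$ with $\gE$ on the exterior domains, and deduce $|F(x)|\sim|x|$. Only then conclude $DF^{T}DF=I+O(r^{-2})$, since that step uses $g^{(y)}(F(x))-\delta=O(|x|^{-2})$.
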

	
	The proof uses two standard analytic inputs. The first is a standard exterior Poisson estimate in dimension four. We record only the form needed below and omit the proof. Briefly, one multiplies $u$ by a cutoff to obtain a global $v$ with $\DeltaE v = g$ decaying like $f$. The Newton potential $w = \Gamma * g$ then decays as asserted. By the assumed growth bound on $u$, the harmonic remainder $h = v - w$ has at most linear growth, hence is affine by Liouville. The asymptotic expansion of $u$ follows by restricting back to $\Omega_{2R}$. With the extra regularity, a rescaling and interior Schauder estimates give the derivative bounds.
	
	\begin{lemma}\label{lem:poisson-exterior-affine-sigma}
		Let $\Omega_R:=\R^4\setminus B_R$, and let $u:\Omega_R\to\R^m$ be smooth. Assume $u$ has at most linear growth, i.e. $|u(x)|\le C(1+|x|)$ for $|x|\gg 1$, and
		\begin{align*}
			\Delta_{\gE}u=f,
			\qquad
			f=\mathcal O_\infty(|x|^{-3-\sigma})
			\qquad\text{on }\Omega_R
		\end{align*}
		for some $\sigma\ge 0$. Then there exist $L\in\Hom(\R^4,\R^m)$ and $c\in\R^m$
		such that
		\begin{align*}
			u(x)=Lx+c+E(x),
		\end{align*}
		where
		\begin{align*}
			E(x)
			=
			\begin{cases}
				\mathcal O_\infty(|x|^{-1-\sigma}), & 0\le \sigma<1,\\
				\mathcal O_\infty(|x|^{-2}\log|x|), & \sigma=1,\\
				\mathcal O_\infty(|x|^{-2}), & \sigma>1.
			\end{cases}
		\end{align*}
	\end{lemma}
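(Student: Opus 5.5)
The plan is to follow the sketch the paper itself indicates: cut off near $\partial B_R$, solve with the Newton potential, identify the harmonic remainder by Liouville, and finish with Schauder estimates on dyadic annuli.

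\textbf{Extension by cutoff.} Fix a smooth cutoff $\chi$ with $\chi=0$ on $B_{2R}$ and $\chi=1$ outside $B_{3R}$. Set $v:=\chi u$, extended by zero, so that $v$ is smooth on all of $\R^4$. Put $\tilde f:=\DeltaE v$. Then $\tilde f=f$ outside $B_{3R}$, and $\tilde f$ is smooth and compactly supported near $\partial B_{2R}$. Hence $\tilde f=\mathcal O_\infty(|x|^{-3-\sigma})$ globally, and $\tilde f\in L^\infty$.

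\textbf{Newton potential.} Let $w:=\Gamma_4*\tilde f$, where $\Gamma_4(x)=-\frac{1}{4\pi^2|x|^2}$ is the fundamental solution, so $\DeltaE w=\tilde f$. Since $3+\sigma>2$, the integral converges absolutely. Estimate it by splitting $\R^4$ into three regions: $|y|<|x|/2$, $|y-x|<|x|/2$, and the rest.
\begin{itemize}
\item[(a)] On $|y|<|x|/2$ one has $|x-y|^{-2}\approx|x|^{-2}$, so this piece contributes $|x|^{-2}\int_{|y|<|x|/2}(1+|y|)^{-3-\sigma}dy$. The integral is $\lesssim |x|^{1-\sigma}$ for $\sigma<1$, $\lesssim\log|x|$ for $\sigma=1$, and bounded for $\sigma>1$.
\item[(b)] On $|y-x|<|x|/2$ the contribution is $\lesssim |x|^{-3-\sigma}\cdot|x|^2=|x|^{-1-\sigma}$.
\item[(c)] On the remaining region the contribution is $\int_{|y|>|x|/2}|y|^{-5-\sigma}dy\lesssim|x|^{-1-\sigma}$.
\end{itemize}
Together these give exactly the three decay rates claimed for $E$, at the $C^0$ level.

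\textbf{Liouville step.} The difference $H:=v-w$ is harmonic on $\R^4$, and since $w$ decays it satisfies $|H(x)|\le C(1+|x|)$. By the gradient estimate for harmonic functions, $\partial^2H$ is bounded by $C\rho^{-1}$ on balls of radius $\rho$, and letting $\rho\to\infty$ shows $\partial^2 H\equiv0$. So $H(x)=Lx+c$ is affine. On $\Omega_{3R}$ this gives $u=Lx+c+w$, so $E:=w$ there. The region $R\le|x|\le3R$ is compact, so the asymptotic statement is unaffected.

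\textbf{Derivative bounds.} For $|x_0|=\rho$ large, rescale $E_\rho(z):=E(\rho z)$ on the annulus $\{1/2<|z|<4\}$. Since $E=u-Lx-c$ and the affine part is harmonic, $\Delta E_\rho=\rho^2 f(\rho z)$. The scaling of $f$ gives $\|\rho^2f(\rho\cdot)\|_{C^k}\lesssim\rho^{-1-\sigma}$. Interior Schauder estimates on the annulus then give $\|E_\rho\|_{C^{k+2}(1<|z|<2)}\lesssim \sup|E_\rho|+\rho^{-1-\sigma}$. Scaling back yields $\partial^\alpha E=\mathcal O(\rho^{-|\alpha|}\cdot\text{rate})$ in each case, since $\rho^{-1-\sigma}$ is no worse than the $C^0$ rate in every case.

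\textbf{Main obstacle.} The main obstacle is the borderline case $\sigma=1$, where the near-origin piece (a) produces exactly the $\log|x|$ factor. This needs careful bookkeeping so that no worse loss appears. A related subtlety is that one might want to subtract the monopole term $-\frac{1}{4\pi^2}\bigl(\int\tilde f\bigr)|x|^{-2}$, but for $\sigma\le1$ the integral $\int\tilde f$ need not converge. The direct three-region estimate above avoids this issue.
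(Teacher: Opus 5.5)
Your proposal is correct and follows the same route as the paper's sketch. You cut off near $\partial B_R$, take the Newton potential of $\tilde f$, show by Liouville that the linearly growing harmonic remainder is affine, and get the derivative bounds from rescaled interior Schauder estimates on dyadic annuli; your three-region splitting of the Newton potential simply supplies the decay estimate, including the $\log|x|$ factor at $\sigma=1$, that the paper states without detail.
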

	
	\medskip
	
	We now turn to the transition map $F=y\circ x^{-1}:\Omega_R\to\Omega_{R'}$. We will establish an estimate for $D^2F$, this serves as the second analytic input required for the proof of Theorem~\ref{thm:main-rigid-bootstrapped}.
	
	Since $F$ is a local isometry between $g^{(x)}$ and $g^{(y)}$, its components satisfy the harmonic map system
	\begin{align}\label{eq:equation-for-F^a}
		(g^{(x)})^{ij}\Bigl(
		\partial_i\partial_j F^a
		-\Gamma^k_{ij}\partial_kF^a
		+\widetilde\Gamma^{a}_{bc}(F)\,\partial_iF^b\,\partial_jF^c
		\Bigr)=0.
	\end{align}
	The asymptotics \eqref{eq:gx-gy-decay-assumption-SR} imply that, after rescaling on fixed annuli,
	this becomes a uniformly elliptic system whose coefficients are close to the Euclidean ones.
	Moreover, the metric equivalence $g^{(x)}\sim \gE$ and $g^{(y)}\sim \gE$ gives $|DF|\le C$ and $|F(x)|\simeq |x|$ for $|x|\gg1$. The next lemma gives the bound for $D^2F$.
	
	\begin{lemma}\label{lem:D2F-decay-estimate}
		In the situation of Theorem~\ref{thm:main-rigid-bootstrapped}, there exist $R_1\ge R$ and $C>0$ such that
		\begin{align}\label{eq:D2F-crude-lemma}
			|D^2F(x)|\le C\,|x|^{-1}
		\end{align}
		for all $x\in\Omega_{R_1}$.
	\end{lemma}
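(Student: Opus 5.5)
We prove \eqref{eq:D2F-crude-lemma} by a scaling argument on dyadic annuli, combining the harmonic map system \eqref{eq:equation-for-F^a} with interior Schauder estimates. The key point is that $F$ itself grows linearly, so $|F|\simeq|x|$ and $|DF|\le C$. After rescaling an annulus of radius $\rho$ to unit size, the function $F_\rho(z):=\rho^{-1}F(\rho z)$ is bounded in $C^1$ uniformly in $\rho$. The gain of $\rho^{-1}$ for $D^2F$ then comes from the scaling of second derivatives, provided the rescaled equation has uniformly controlled coefficients.

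Fix $\rho\ge 2R_1$ and consider $z\in B_4\setminus B_{1/2}$. Set $g_\rho(z):=g^{(x)}(\rho z)$ and $\widetilde g_\rho(w):=g^{(y)}(\rho w)$, both viewed as metrics in the rescaled coordinates. By \eqref{eq:gx-gy-decay-assumption-SR}, $g_\rho-\gE$ is $O(\rho^{-2})$ in every $C^k$ norm on the annulus. The Christoffel symbols scale as
\begin{align*}
	\Gamma(g_\rho)(z)=\rho\,\Gamma(g^{(x)})(\rho z)=O(\rho^{-2})
\end{align*}
in $C^k$, and likewise for $\widetilde g_\rho$. The point $F_\rho(z)$ stays in a fixed annulus $\{c\le|w|\le C\}$ because $|F(x)|\simeq|x|$. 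Hence $F_\rho$ satisfies
\begin{align*}
	g_\rho^{ij}\partial_i\partial_jF_\rho^a
	=
	g_\rho^{ij}\Gamma(g_\rho)^k_{ij}\partial_kF^a_\rho
	-
	g_\rho^{ij}\Gamma(\widetilde g_\rho)^a_{bc}(F_\rho)\,\partial_iF_\rho^b\partial_jF_\rho^c .
\end{align*}
This is a linear uniformly elliptic equation with smooth coefficients, bounded in $C^{k}$ independently of $\rho$. Its right-hand side is bounded in $L^\infty$ by $C\rho^{-2}$, using $|DF_\rho|\le C$.

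We bootstrap from this. First, $W^{2,p}$ estimates give a $\rho$-independent $C^{1,\beta}$ bound for $F_\rho$ on $B_3\setminus B_{3/4}$, using only $\|F_\rho\|_{L^\infty}\le C$ and the bound on the right-hand side. This bound makes $\Gamma(\widetilde g_\rho)(F_\rho)\,\partial F_\rho\,\partial F_\rho$ Hölder continuous with a uniform bound, which is where the composition $\Gamma(\widetilde g_\rho)\circ F_\rho$ needs care. Schauder estimates then give a uniform $C^{2,\beta}$ bound on $B_2\setminus B_1$. Since $D^2F(\rho z)=\rho^{-1}D^2F_\rho(z)$, we obtain $|D^2F(x)|\le C|x|^{-1}$ for $|x|\in[\rho,2\rho]$. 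Letting $\rho$ range over dyadic scales covers $\Omega_{R_1}$.

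The main obstacle is the set of \emph{a priori} inputs that make the scaled family uniformly controlled. These are $|DF|\le C$, $|F(x)|\simeq|x|$, and the requirement that $F$ map the annulus $\{\rho/2\le|x|\le4\rho\}$ into a comparable annulus where the decay of $g^{(y)}$ applies. All three follow from the facts that $F$ is an isometry and that both metrics are uniformly equivalent to $\gE$. Specifically, $|DF|_{\gE}\le C$ holds pointwise from the metric equivalence. Comparing distances along the ends, which are asymptotically Euclidean, gives $|x|/C-C\le|F(x)|\le C|x|+C$. Once these are checked, choosing $R_1$ large enough makes the rest routine.

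No estimate sharper than $|x|^{-1}$ is needed here. The improvement to \eqref{eq:E-bootstrapped-decay-SR} is left to Lemma~\ref{lem:poisson-exterior-affine-sigma}, applied in the proof of Theorem~\ref{thm:main-rigid-bootstrapped}.
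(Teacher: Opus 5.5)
Your proposal is correct and follows essentially the same route as the paper: you rescale to $F_\rho(z)=\rho^{-1}F(\rho z)$ on a fixed annulus, use that the rescaled harmonic-map system is uniformly elliptic with lower-order terms of size $O(\rho^{-2})$, and bootstrap via interior $W^{2,p}$ estimates and Morrey to a uniform $C^{1,\beta}$ bound, then via Schauder to $C^{2,\beta}$, before scaling back to $|D^2F(x)|\le C|x|^{-1}$. Your argument also justifies the a priori inputs $|DF|\le C$ and $|F(x)|\simeq|x|$, from the isometry relation and a distance comparison, which the paper only asserts.
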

	
	\begin{proof}
		The argument is a standard rescaling argument on a fixed annulus. We zoom in at the scale
		$r_0:=|x_0|$, rewrite the equation for $F$ on a unit-size annulus, and then use interior elliptic estimates
		to obtain a uniform $C^{2,\alpha}$ bound at that scale.
		
		Fix $x_0\in\Omega_{R_1}$ with $r_0:=|x_0|$ large, and set
		\begin{align*}
			F_{r_0}(z):=\frac1{r_0}F(r_0 z)
		\end{align*}
		on the fixed annulus $A:=B_2\setminus B_{1/2}$. Then
		\begin{align*}
			|D_x^2F(x_0)|=r_0^{-1}|D_z^2F_{r_0}(z_0)|,
			\qquad
			z_0:=x_0/r_0\in A.
		\end{align*}
		So it suffices to bound $D_z^2F_{r_0}$ uniformly on a smaller annulus.
		
		Because $|F(x)|\simeq |x|$ and $|DF|\le C$ for $|x|\gg1$, the rescaled maps $F_{r_0}$ are uniformly bounded in
		$C^0$ and $C^1$ on $A$. Rescaling the harmonic map equation \eqref{eq:equation-for-F^a}, we obtain
		\begin{align*}
			a^{ij}(r_0z)\,\partial_{z_i}\partial_{z_j}F_{r_0}^a
			=
			B_{r_0}^k(z)\,\partial_{z_k}F_{r_0}^a
			-
			C^a_{r_0,ij,bc}(z)\,\partial_{z_i}F_{r_0}^b\,\partial_{z_j}F_{r_0}^c
		\end{align*}
		on $A$, where $a^{ij}(r_0z)$ is uniformly elliptic. Moreover, by the decay of the metric coefficients and Christoffel symbols, together with the chain rule,
		the rescaled coefficients satisfy
		\begin{align*}
			\|a^{ij}(r_0\cdot)-\delta^{ij}\|_{C^{0,\alpha}(A)}
			\le C r_0^{-2},
			\quad
			\|B_{r_0}\|_{C^{0,\alpha}(A)}
			+
			\|C_{r_0}\|_{C^{0,\alpha}(A)}
			\le C r_0^{-2}.
		\end{align*}
		In particular, the system is uniformly elliptic on $A$ for $r_0$ large, with coefficient bounds independent of $r_0$.
		
		Now choose nested annuli $A'\Subset A''\Subset A$. Applying the interior $W^{2,p}$ estimate on $A''\Subset A$,
		we obtain
		\begin{align*}
			\|F_{r_0}\|_{W^{2,p}(A'')}\le C_p.
		\end{align*}
		Then Sobolev--Morrey gives a uniform $C^{1,\beta}$ bound on $A''$ for some $\beta\in(0,1)$, so the right-hand side
		of the rescaled system is uniformly bounded in $C^{0,\alpha}(A'')$. A further application of the interior Schauder
		estimate on $A'\Subset A''$ yields
		\begin{align*}
			\|F_{r_0}\|_{C^{2,\alpha}(A')}\le C,
		\end{align*}
		with $C$ independent of $r_0$.
		
		Since $z_0\in A'$, we have $|D_z^2F_{r_0}(z_0)|\le C$, and scaling back gives
		\begin{align*}
			|D_x^2F(x_0)|\le C r_0^{-1}.
		\end{align*}
		This proves \eqref{eq:D2F-crude-lemma}.
	\end{proof}
	
	\begin{proof}[Proof of Theorem~\ref{thm:main-rigid-bootstrapped}]
		We divide the proof into three steps.
		
		\medskip
		\noindent\textbf{Step 1. A first affine approximation.}
		Each component $F^a$ satisfies the harmonic map equation \eqref{eq:equation-for-F^a}, rewriting it with respect to $\Delta_{\gE}$ gives
		\begin{align}\label{eq:laplacian-rewrite-Fa}
			\Delta_{\gE}F^a
			=
			\bigl(\delta^{ij}-(g^{(x)})^{ij}\bigr)\partial_i\partial_jF^a
			+
			(g^{(x)})^{ij}\Gamma(g^{(x)})^k_{ij}\partial_kF^a
			-
			(g^{(x)})^{ij}\widetilde\Gamma(g^{(y)})^a_{bc}(F)\partial_iF^b\partial_jF^c.
		\end{align}
		From \eqref{eq:gx-gy-decay-assumption-SR}, the Christoffel symbols of $g^{(x)}$ and $g^{(y)}$ are
		$\mathcal O_\infty(r^{-3})$.
		Together with Lemma~\ref{lem:D2F-decay-estimate} and $|DF|\le C$, this yields
		\begin{align*}
			\Delta_{\gE}F^a=\mathcal O_\infty(|x|^{-3}).
		\end{align*}
		Applying Lemma~\ref{lem:poisson-exterior-affine-sigma} with $\sigma=0$ componentwise, we obtain
		\begin{align}\label{eq:F-affine-first}
			F(x)=Ax+b+\mathcal E_1(x),
			\qquad
			\mathcal E_1(x)=\mathcal O_\infty(|x|^{-1}),
		\end{align}
		for constant matrix $A$ and constant vector $b$. In particular,
		\begin{align}\label{eq:SR-first-affine-derivative}
			\partial_iF^a=A_{ai}+\mathcal O_\infty(|x|^{-2}).
		\end{align}
		
		Since $F$ is a local isometry, and
		\begin{align*}
			g^{(x)}_{ij}(x)
			=
			g^{(y)}_{ab}(F(x))\,\partial_iF^a\,\partial_jF^b.
		\end{align*}
		Using \eqref{eq:gx-gy-decay-assumption-SR}, \eqref{eq:F-affine-first}, and \eqref{eq:SR-first-affine-derivative},
		and letting $|x|\to\infty$, we obtain
		\begin{align*}
			\delta_{ij}=\delta_{ab}A_{ai}A_{bj},
		\end{align*}
		so $A\in \mathrm O(4)$.
		
		\medskip
		\noindent\textbf{Step 2. Covariance of the Schwarzschild background.}
		
		Denote the linear map $x\to Ax$ by $L$. Since $A\in \mathrm O(4)$, one has $|Ax|=|x|$. Therefore, with $U(z):=1+\tfrac{\lambda}{18|z|^2}$ and $\Phi^\Sch(z):=z-\tfrac{\lambda}{9}\frac{z}{|z|^2}$, we have
		\begin{align*}
			U(Ax)=U(x),
			\qquad
			\Phi^\Sch(Ax)=A\,\Phi^\Sch(x).
		\end{align*}
		Then
		\begin{align}\label{eq:rigid-Sch-background-cov}
			\begin{split}
				L^*g^{\Sch,\sharp}_\lambda
				&=
				L^*\Bigl((\Phi^\Sch)^*(U^2\gE)\Bigr)
				=
				(\Phi^\Sch\circ L)^*(U^2\gE)\\
				&=
				(L\circ \Phi^\Sch)^*(U^2\gE)
				=
				(\Phi^\Sch)^*\bigl(L^*(U^2\gE)\bigr)
				=
				(\Phi^\Sch)^*(U^2\gE)
				=
				g^{\Sch,\sharp}.
			\end{split}
		\end{align}
		
		Now define
		\begin{align*}
			T^{(x),k}(x)
			&:=
			\bigl(g^{\Sch,\sharp}_{x}\bigr)^{ij}
			\Gamma\bigl(g^{\Sch,\sharp}_{x}\bigr)^k_{ij}(x),
			\\
			T^{(y),a}(y)
			&:=
			\bigl(g^{\Sch,\sharp}_{y}\bigr)^{bc}
			\widetilde\Gamma\bigl(g^{\Sch,\sharp}_{y}\bigr)^a_{bc}(y),
		\end{align*}
		and the expansion of reference metric \eqref{eq:gSch-sharp-expansion}  reads
		\begin{align*}
			T^{(x)}=\mathcal O_\infty(|x|^{-3}),
			\quad
			\nabla T^{(x)}=\mathcal O_\infty(|x|^{-4}),
			\quad
			T^{(y)}=\mathcal O_\infty(|y|^{-3}),
			\quad
			\nabla T^{(y)}=\mathcal O_\infty(|y|^{-4}).
		\end{align*}
		
		Moreover, the standard formula for Christoffel symbols under coordinate transformations reads
		\begin{align*}
			\Gamma(g^{\Sch,\sharp}_{x})^k_{ij}(x)
			=
			(A^{-1})^k_a\,A_{bi}A_{cj}\,
			\widetilde\Gamma(g^{\Sch,\sharp}_{y})^a_{bc}(Ax).
		\end{align*}
		Together with the transformation rule for the metric in \eqref{eq:rigid-Sch-background-cov}, we carry the computations as follows
		\begin{align*}
			\bigl(g^{\Sch,\sharp}_{x}\bigr)^{ij}
			\Gamma(g^{\Sch,\sharp}_{x})^k_{ij}(x)\,A_{ak}
			&=
			\Bigl((A^{-1})^i_b(A^{-1})^j_c\,\bigl(g^{\Sch,\sharp}_{y}\bigr)^{bc}(Ax)\Bigr)
			\Bigl((A^{-1})^k_a\,A_{bi}A_{cj}\,
			\widetilde\Gamma(g^{\Sch,\sharp}_{y})^a_{bc}(Ax)\Bigr)\,A_{ak}
			\\
			&=
			(\bigl(g^{\Sch,\sharp}_{y}\bigr))^{bc}(Ax)\,
			\widetilde\Gamma\bigl(g^{\Sch,\sharp}_{y}\bigr)^a_{bc}(Ax),
		\end{align*}
		that is
		\begin{align}\label{eq:rigid-T-covariance}
			T^{(x),k}(x)\,A_{ak}=T^{(y),a}(Ax).
		\end{align}
		
		\medskip
		\noindent\textbf{Step 3. Improvement to $\Delta_{\gE}F=\mathcal O_\infty(|x|^{-4})$.}
		
		Because $x$ and $y$ are preferred coordinate systems, the relative gauge conditions give
		\begin{align*}
			(g^{(x)})^{ij}\Gamma(g^{(x)})^k_{ij}
			=
			(g^{(x)})^{ij}\Gamma(g^{\Sch,\sharp}_{x})^k_{ij},
			\\
			(g^{(y)})^{bc}\widetilde\Gamma(g^{(y)})^a_{bc}
			=
			(g^{(y)})^{bc}\widetilde\Gamma(g^{\Sch,\sharp}_{y})^a_{bc}.
		\end{align*}
		Since
		\begin{align*}
			g^{(x)}-g^{\Sch,\sharp}_{x}
			=
			\mathcal O_\infty(|x|^{-2}),
			\qquad
			g^{(y)}-g^{\Sch,\sharp}_{y}
			=
			\mathcal O_\infty(|y|^{-2}),
		\end{align*}
		and $\Gamma(g^{\Sch,\sharp}_{\bullet})=\mathcal O_\infty(r^{-3})$, we get
		\begin{align}\label{eq:rigid-gauge-replacement}
			(g^{(x)})^{ij}\Gamma(g^{(x)})^k_{ij}
			=
			T^{(x),k}+\mathcal O_\infty(|x|^{-5}),
			\qquad
			(g^{(y)})^{bc}\widetilde\Gamma(g^{(y)})^a_{bc}
			=
			T^{(y),a}+\mathcal O_\infty(|y|^{-5}).
		\end{align}
		
		On the other hand, since $F^*g^{(y)}=g^{(x)}$, we have the exact identity
		\begin{align*}
			(g^{(x)})^{ij}\,\partial_iF^b\,\partial_jF^c
			=
			(g^{(y)})^{bc}(F(x)).
		\end{align*}
		Therefore the last term in \eqref{eq:laplacian-rewrite-Fa} satisfies
		\begin{align*}
			(g^{(x)})^{ij}\widetilde\Gamma(g^{(y)})^a_{bc}(F)\,\partial_iF^b\,\partial_jF^c
			=
			(g^{(y)})^{bc}(F)\,\widetilde\Gamma(g^{(y)})^a_{bc}(F)
			=
			T^{(y),a}(F(x))+\mathcal O_\infty(|x|^{-5}),
		\end{align*}
		where we also used \eqref{eq:rigid-gauge-replacement} in the $y$-chart.
		
		Substituting these facts into \eqref{eq:laplacian-rewrite-Fa}, and using
		\eqref{eq:SR-first-affine-derivative}, we obtain
		\begin{align*}
			\Delta_{\gE}F^a
			&=
			\bigl(\delta^{ij}-(g^{(x)})^{ij}\bigr)\partial_i\partial_jF^a
			+
			T^{(x),k}\partial_kF^a
			-
			T^{(y),a}(F(x))
			+
			\mathcal O_\infty(|x|^{-5})\\
			&=
			T^{(x),k}A_{ak}
			-
			T^{(y),a}(F(x))
			+
			\mathcal O_\infty(|x|^{-5}).
		\end{align*}
		Furthermore, by Taylor expansion,
		\begin{align*}
			T^{(y),a}(F(x))
			=
			T^{(y),a}(Ax)+\mathcal O_\infty(|x|^{-4}).
		\end{align*}
		Combining this with \eqref{eq:rigid-T-covariance}, we conclude
		\begin{align}\label{eq:rigid-DeltaF-improved}
			\Delta_{\gE}F^a
			=
			\mathcal O_\infty(|x|^{-4}).
		\end{align}
		
		\medskip
		\noindent\textbf{Step 4. Final asymptotics and the $\Gamma$-statement.}
		
		Set $G(x):=F(x)-Ax-b$. Then $G=\mathcal O_\infty(|x|^{-1})$ by \eqref{eq:F-affine-first}, and
		\eqref{eq:rigid-DeltaF-improved} gives
		\begin{align*}
			\Delta_{\gE}G=\mathcal O_\infty(|x|^{-4}).
		\end{align*}
		Applying Lemma~\ref{lem:poisson-exterior-affine-sigma} to $G$ with $\sigma=1$,
		we obtain an expansion
		\begin{align*}
			G(x)=Lx+c+\mathcal E(x),
			\quad\mathcal E(x)=\mathcal O_\infty(|x|^{-2}\log|x|).
		\end{align*}
		But $G(x)=\mathcal O_\infty(|x|^{-1})$ already, so necessarily $L=0$ and $c=0$.
		Hence
		\begin{align*}
			F(x)=Ax+b+\mathcal E(x),
			\qquad
			\mathcal E(x)=\mathcal O_\infty(|x|^{-2}\log|x|),
		\end{align*}
		and, by the derivative part of Lemma~\ref{lem:poisson-exterior-affine-sigma},
		\begin{align*}
			\partial^{(k)}\mathcal E(x)
			=
			\mathcal O_\infty(|x|^{-2-k}\log|x|)
			\qquad(k\ge 1).
		\end{align*}
		This proves \eqref{eq:F-affine-bootstrapped-SR} and
		\eqref{eq:E-bootstrapped-decay-SR}.
		
		Finally, assume both charts are $\Gamma$-compatible. For each $\gamma\in\Gamma$, the transition map sends
		$\gamma$-equivariant $x$-coordinates to $\gamma$-equivariant $y$-coordinates, so there exists
		$\widehat\gamma\in\Gamma$ such that $F(\gamma x)=\widehat\gamma F(x)$ for $|x|\gg1$. Inserting \eqref{eq:F-affine-bootstrapped-SR} and comparing the linear terms gives $A\gamma=\widehat\gamma A$, thus $A$ normalizes $\Gamma$.
	\end{proof}
	
	With Theorem~\ref{thm:main-rigid-bootstrapped} in hand, the proof of the second main theorem, Theorem~\ref{thm:main2}, is straightforward. We next compute the transformation law of the canonical coefficient $A^\sharp$ and of the splitting data.
	
	\begin{proof}[Proof of Theorem~\ref{thm:main2}]
		The coordinate change identity is
		\begin{align}
			g^{(x)}_{ij}(x)
			=
			\partial_iF^a(x)\,\partial_jF^b(x)\,g^{(y)}_{ab}(F(x)).
			\label{eq:coordinate-change-identity}
		\end{align}
		From Theorem~\ref{thm:main-rigid-bootstrapped}, (we still denote $r=|x|$)
		\begin{align}\label{eq:partial_Fa}
			\partial_iF^a(x)=A_{ai}+\mathcal O_\infty(r^{-3}\log r),
			\quad
			F(x)=Ax+b+\mathcal O_\infty(r^{-2}\log r),
		\end{align}
		Write $y=F(x)$ and $\rho=|y|$.
		Since $A\in \mathrm O(4)$ and $b$ is constant,
		\begin{align*}
			\rho^{-4}=r^{-4}+\mathcal O(r^{-5}),
			\quad
			\rho^{-2-\sigma}=r^{-2-\sigma}+\mathcal O(r^{-3-\sigma}),
		\end{align*}
		also
		\begin{align*}
			y_py_q
			=
			(Ax)_p(Ax)_q+\mathcal O(r).
		\end{align*}
		Therefore,
		\begin{align}\label{eq:yyrho4-expansion}
			\frac{y_py_q}{\rho^4}
			=
			\frac{(Ax)_p(Ax)_q}{r^4}+\mathcal{O}_\infty(r^{-3})
			=
			A_{pk}A_{q\ell}\,\frac{x_kx_\ell}{r^4}+\mathcal{O}_\infty(r^{-3}).
		\end{align}
		Substituting \eqref{eq:gx-gy-decay-assumption-SR}, \eqref{eq:partial_Fa}, and \eqref{eq:yyrho4-expansion}
		into \eqref{eq:coordinate-change-identity}, we obtain
		\begin{align*}
			g^{(x)}_{ij}(x)
			=
			\delta_{ij}
			+
			\Bigl(A_{ai}A_{bj}A_{pk}A_{q\ell}\,A^{\sharp,(y)}_{abpq}\Bigr)\frac{x_kx_\ell}{r^4}
			+
			\mathcal O_\infty(r^{-2-\sigma}).
		\end{align*}
		Comparing with the $x$-expansion in \eqref{eq:gx-gy-decay-assumption-SR} gives
		\begin{align}\label{eq:Asharp-index-relation}
			A^{\sharp,(x)}_{ijk\ell}
			=
			A_{ai}A_{bj}A_{pk}A_{q\ell}\,A^{\sharp,(y)}_{abpq}.
		\end{align}
		This is instrumental in identifying how the Weyl-type part behaves under transformations between preferred coordinates. 
		
		Rearrange the splitting of $A^\sharp$ in each chart as (recall $\lambda=9|\Gamma|\,m_{\ADM}(g)$)
		\begin{align*}
			\begin{split}
				s\bigl(\Weyl_\infty^{(x)}\bigr)_{ijk\ell}
				&=
				A^{\sharp,(x)}_{ijk\ell}
				-\Xi\bigl(\lambda\bigr)_{ijk\ell},
				\\
				s\bigl(\Weyl_\infty^{(y)}\bigr)_{abpq}
				&=
				A^{\sharp,(y)}_{abpq}
				-\Xi\bigl(\lambda\bigr)_{abpq}.
			\end{split}
		\end{align*}
		The operator $\Xi$ is $\mathrm O(4)$-equivariant in the sense that
		\begin{align*}
			\Xi(\lambda)_{ijk\ell}
			&=
			A_{ai}A_{bj}A_{pk}A_{q\ell}\,\Xi(\lambda)_{abpq},
		\end{align*}
		and, for any $(0,4)$-tensor $R$,
		\begin{align*}
			s(R)_{ijk\ell}=\Bigl(\frac12(R_{ik\ell j}+R_{i\ell kj})\Bigr)
			=
			A_{ai}A_{pk}A_{q\ell}A_{bj}\,
			\Bigl(\frac12(\widetilde R_{apqb}+\widetilde R_{aqpb})\Bigr)
			=A_{ai}A_{pk}A_{q\ell}A_{bj}
			s(\widetilde R)_{abpq},
		\end{align*}
		whenever
		\begin{align*}
			R_{ik\ell j}=A_{ai}A_{pk}A_{q\ell}A_{bj}\,\widetilde R_{apqb}.
		\end{align*}
		Combining these properties with \eqref{eq:Asharp-index-relation}, we compute
		\begin{align*}
			s(\Weyl_\infty^{(x)})_{ijk\ell}
			&=A_{ai}A_{bj}A_{pk}A_{q\ell}\left( A^{\sharp,(y)}_{abpq}
			-\Xi(\lambda)_{abpq}\right) 
			\\
			&=A_{ai}A_{bj}A_{pk}A_{q\ell}\,
			s\bigl(\Weyl_\infty^{(y)}\bigr)_{abpq}
			\\
			&=s\left( \Weyl^{\prime}\right)_{ijk\ell},
		\end{align*}
		where $\Weyl^{\prime}$ is defined as $\Weyl^{\prime}_{ijk\ell}=A_{ai}A_{bj}A_{pk}A_{q\ell}\bigl(\Weyl_\infty^{(y)}\bigr)_{abpq}$. Since $s$ is injective on algebraic curvature tensors, we conclude
		\begin{align*}
			(\Weyl_\infty^{(x)})_{ijk\ell}
			=
			A_{ai}A_{bj}A_{ck}A_{d\ell}\,(\Weyl_\infty^{(y)})_{abcd}.
		\end{align*}
		This proves Theorem~\ref{thm:main2}.
	\end{proof}
	
	\subsection{An example: the Burns metric}\label{subsec:burns-example}
	
	We now present the detailed computation for the Burns metric, announced in the introduction, to illustrate the statements of Theorem~\ref{thm:main1} and Theorem~\ref{thm:main2}. We work in the standard ALE coordinates on the end of $(\mathrm{Bl}_0\mathbb C^2,g^{\Burns})$, where the group at infinity is trivial. Starting from the K\"ahler potential, we compute the rough
	$r^{-2}$ coefficient $A^{\Burns}$, then, just as we previously tracked the Weyl-type tensor from the rough coefficient, we will perform the corresponding coordinate transformation, and finally obtain
	the splitting
	\begin{align*}
		(A^{\Burns})^\sharp=\Xi(3)+s(\Weyl_\infty^{\mathrm{Burns}}),
		\qquad
		\Weyl_\infty^{\mathrm{Burns}}\neq 0 .
	\end{align*}
	This shows concretely that the mass part and the Weyl-type part in
	Theorem~\ref{thm:main1} may both be nonzero.
	
	On the end, we identify $\mathbb C^{2}\setminus\{0\}$ with $\mathbb R^{4}\setminus\{0\}$ and write
	$r=|x|$.  We use the convention under which the flat K\"ahler potential
	$s=|z|^{2}$ gives the standard Euclidean metric $g_E$ on
	$\mathbb C^{2}\simeq\mathbb R^{4}$.  With this convention, the Burns metric is
	given by the K\"ahler potential
	$\Phi(s)=s+\log s$, where $s=|z|^{2}=r^{2}$.  Hence
	\begin{align*}
		g_{a\bar b}^{\Burns}
		=
		\partial_a\partial_{\bar b}\Phi
		=
		\left(1+\frac1{s}\right)\delta_{ab}
		-
		\frac{\bar z_a z_b}{s^2}.
	\end{align*}
	Let $J$ be the standard complex structure on
	$\mathbb R^{4}\simeq\mathbb C^{2}$, and write
	$(Jx)_i=J_{ik}x_k$, where the index of $J$ is lowered using the Euclidean
	metric.  In real coordinates, the preceding identity gives the exact formula
	\begin{align}\label{eq:Burns-real-expansion2}
		g_{ij}^{\Burns}
		=
		\delta_{ij}
		+
		\frac1{r^2}\delta_{ij}
		-
		\frac{x_i x_j}{r^4}
		-
		\frac{(Jx)_i(Jx)_j}{r^4}.
	\end{align}
	Thus $g^{\Burns}=g_E+h^{\Burns}$ on the end, where the order $r^{-2}$ perturbation has the form
	\begin{align}\label{eq:Burns-A}
		h_{ij}^{\Burns}
		=
		A_{ijk\ell}^{\Burns}\frac{x_kx_\ell}{r^4},
		\quad 
		A_{ijk\ell}^{\Burns}
		=
		\delta_{ij}\delta_{k\ell}
		-
		\frac12
		\bigl(
		\delta_{ik}\delta_{j\ell}
		+
		\delta_{i\ell}\delta_{jk}
		\bigr)
		-
		\frac12
		\bigl(
		J_{ik}J_{j\ell}
		+
		J_{i\ell}J_{jk}
		\bigr).
	\end{align}
	
	A natural next step is to compute the two partial traces of the rough coefficient $A^{\Burns}$ and the ADM mass of the metric $g^{\Burns}$. Still set
	$U_{k\ell}^{\Burns}:=A_{ppk\ell}^{\Burns}$ and $V_{i\ell}^{\Burns}:=A_{ijj\ell}^{\Burns}$.  Since $J$ is an
	orthogonal complex structure, there holds $J_{pk}J_{p\ell}=\delta_{k\ell}$ and $J_{ij}J_{j\ell}=-\delta_{i\ell}$. Then we have
	\begin{align*}
		U_{k\ell}^{\Burns}=2\delta_{k\ell},
		\qquad
		V_{i\ell}^{\Burns}=-\delta_{i\ell},
	\end{align*}
	and
	\begin{align*}
		\lambda^{\Burns}(A)
		=
		\frac14\bigl(\tr(V^{\Burns})+2\tr(U^{\Burns})\bigr)
		=
		3.
	\end{align*}
	Since the group at infinity is trivial, Theorem~\ref{thm:main1} predicts that
	this scalar quantity $\lambda^{\Burns}(A)$ should be $9m_{\ADM}(g^{\Burns})$, which means the Burns metric should satisfy
	\begin{align}\label{eq:Burns-mass}
		m_{\ADM}(g^{\Burns})=\frac13.
	\end{align}
	This value agrees with the standard computation of the ADM mass of the Burns
	metric in K\"ahler geometry.  Indeed, the Burns metric with potential
	$\Phi(s)=s+\log s$ has ADM mass $1/3$ under the normalization used here; this is
	consistent with the general ALE K\"ahler mass formula of Hein--LeBrun
	\cite{HL}.
	
	Now we carry out the coordinate transformation analogous to the general case; see Proposition~\ref{prop:W-properties} for details. Consider the following coordinate transformation:
	\begin{align}\label{eq:coordinate-transformation-Burns}
		x_i
		=
		y_i
		+
		B_{ij}\frac{y_j}{|y|^2},
		\quad
		B=-\frac12 I.
	\end{align}
	At the level of the $r^{-2}$ coefficient, this changes $A^{\Burns}$ into
	$(A^{\Burns})^\sharp=A^{\Burns}+\Psi_4(B)$. For this choice of $B$, the definition of $\Psi_4$ (c.f. \eqref{eq:Psi4-def}) gives
	\begin{align*}
		\Psi_4(B)_{ijk\ell}
		=
		-\delta_{ij}\delta_{k\ell}
		+
		\delta_{ik}\delta_{j\ell}
		+
		\delta_{i\ell}\delta_{jk}.
	\end{align*}
	Combining this with \eqref{eq:Burns-A}, we get
	\begin{align*}
		(A^{\Burns})^\sharp_{ijk\ell}
		=
		\frac12
		\bigl(
		\delta_{ik}\delta_{j\ell}
		+
		\delta_{i\ell}\delta_{jk}
		\bigr)
		-
		\frac12
		\bigl(
		J_{ik}J_{j\ell}
		+
		J_{i\ell}J_{jk}
		\bigr).
	\end{align*}
	
	Analogous to the splitting we performed on $A^\sharp$ in \eqref{eq:Asharp-splitting-version-2}, we may separate the mass part of $A^{\Burns}$. Since $\lambda(A^{\Burns})=3$, the corresponding scalar
	tensor is
	\begin{align*}
		\Xi(3)_{ijk\ell}
		=
		-\frac13\delta_{ij}\delta_{k\ell}
		+
		\frac23
		\bigl(
		\delta_{ik}\delta_{j\ell}
		+
		\delta_{i\ell}\delta_{jk}
		\bigr).
	\end{align*}
	Hence the remaining part is
	\begin{align*}
		\begin{split}
			W(A^{\Burns})_{ijk\ell}
			&:=
			(A^{\Burns})^\sharp_{ijk\ell}-\Xi(3)_{ijk\ell}
			\\
			&=
			\frac13\delta_{ij}\delta_{k\ell}
			-
			\frac16
			\bigl(
			\delta_{ik}\delta_{j\ell}
			+
			\delta_{i\ell}\delta_{jk}
			\bigr)
			-
			\frac12
			\bigl(
			J_{ik}J_{j\ell}
			+
			J_{i\ell}J_{jk}
			\bigr).
		\end{split}
	\end{align*}
	This tensor is not zero.  For example, if the orthonormal basis is chosen so
	that $J_{12}J_{21}=-1$, then $W(A^{\Burns})_{1212}=\frac13$. On the other hand, $W(A^{\Burns})$ has exactly the algebraic properties expected from
	the Weyl-type part. Direct contraction shows $W(A^{\Burns})_{ppk\ell}=0$ and $W(A^{\Burns})_{ijj\ell}=0$ hold. Moreover, using $J_{ij}=-J_{ji}$ and $J^2=-I$, one checks that $W(A^{\Burns})$ satisfies the first Bianchi identity. Therefore $W(A^{\Burns})$ lies in the image of the map $s$ applied to algebraic Weyl
	tensors by Proposition~\ref{prop:Wtilde-eq}. Thus there exists a nonzero algebraic Weyl tensor
	$\Weyl_\infty^{\mathrm{Burns}}$ such that
	\begin{align*}
		W(A^{\Burns})=s\bigl(\Weyl_\infty^{\mathrm{Burns}}\bigr).
	\end{align*}
	
	Consequently, after applying the coordinate transformation \eqref{eq:coordinate-transformation-Burns}, the Burns metric \eqref{eq:Burns-real-expansion2} has
	the expansion
	\begin{align*}
		g_{ij}^{\Burns}
		=
		\delta_{ij}
		+
		\left(
		\Xi(3)_{ijk\ell}
		+
		s\bigl(\Weyl_\infty^{\mathrm{Burns}}\bigr)_{ijk\ell}
		\right)
		\frac{y_ky_\ell}{|y|^4}
		+
		\mathcal O_\infty(|y|^{-3}).
	\end{align*}
	Since $\Gamma=\{1\}$ and $3=9m_{\ADM}(g^{\Burns})$ by \eqref{eq:Burns-mass}, this is
	precisely the form predicted by Theorem~\ref{thm:main1}. The subsequent gauge correction used to impose the relative Schwarzschild gauge
	does not change this displayed $r^{-2}$ coefficient.  Hence the Burns metric
	gives a concrete example in which the leading term contains both a nonzero mass part and a nonzero Weyl-type part. Furthermore, Theorem~\ref{thm:main2} can also be checked directly from this explicit coefficient.
	
	\section{Leading Weyl data and crepancy for minimal quotient resolutions}
	\label{sec:Kahler-ALE-minimal-resolution}
	
	In this section we specialize the preferred asymptotic expansion obtained above
	to the scalar-flat K\"ahler ALE metrics on minimal resolutions of quotient
	surface singularities.  Let
	$\Gamma\subset \mathrm U(2)$ be a finite subgroup acting freely on $\Sph^3$,
	set $Y=\mathbb C^2/\Gamma$, and let
	\begin{align*}
		\pi:X\longrightarrow Y
	\end{align*}
	be the minimal resolution.  The resolution map identifies
	$X\setminus \pi^{-1}(0)$ with the smooth part of the quotient, and therefore
	gives $X$ a natural holomorphic ALE end.
	
	Scalar-flat K\"ahler ALE metrics on such spaces are provided by the existence
	results of Calderbank--Singer in the cyclic case and Lock--Viaclovsky in the
	non-cyclic case (c.f. \cite{CS04},\cite{LV19}).  Our aim here is to use the leading asymptotic invariant
	constructed in Theorem~\ref{thm:main1} in the reverse direction: starting from a
	scalar-flat K\"ahler ALE metric on $X$, we ask what the Weyl tensor at infinity
	remembers about the resolution $\pi$.
	
	The first step is to compute the preferred
	$|x|^{-2}$ coefficient in the K\"ahler setting and relate its Weyl part to the
	ADM mass and the limiting complex structure at infinity.  The second step uses
	the Hein--LeBrun mass formula to rewrite this coefficient in terms of the
	exceptional curves of the minimal resolution.  This will give the crepancy
	criterion in Theorem~\ref{thm:main3}.
	
	\subsection{The K\"ahler ALE coefficient at infinity}
	\label{subsec:Kahler-ALE-coefficient}
	
	Let $g$ be a complete scalar-flat K\"ahler ALE metric on $(X,J)$, and let
	$\omega=g(J\cdot,\cdot)$ be its K\"ahler form.  On the quotient end described
	above, lift to the $\Gamma$-cover and use the holomorphic coordinates
	$z=(z^1,z^2)$ coming from the quotient map.  Let $u=(u_1,\dots,u_4)$ be the
	associated real coordinates and put $r=|u|=|z|$.  We first compute the
	homogeneous $r^{-2}$ coefficient in these holomorphic coordinates.
	
	In these coordinates, the scalar-flat K\"ahler ALE potential has the form
	\begin{align*}
		\omega
		=
		\sqrt{-1}\,\del\bar\del
		\bigl(
		\rho+c\log\rho+\psi
		\bigr),
		\qquad
		\rho=|z|^2=r^2,
		\qquad
		\psi=\mathcal O_\infty(r^{-1}),
	\end{align*}
	for some real constant $c$.  We use the convention that the flat potential
	$\rho=|z|^2$ gives the Euclidean metric.  Such expansions are standard in the
	scalar-flat K\"ahler ALE setting; see for example \cite{HL,AC21,HRS}.  The
	constant $c$ denotes the logarithmic coefficient, and should not be confused with
	the discrepancy coefficients used later.
	
	Let $J_\infty$ denote the standard limiting complex structure in the holomorphic
	coordinates $u$.  If $F=F(\rho)$ is a radial K\"ahler potential, then the
	associated real metric is
	\begin{align*}
		(g_F)_{ij}
		=
		F'(\rho)\delta_{ij}
		+
		F''(\rho)
		\Bigl(
		u_i u_j+(J_\infty u)_i(J_\infty u)_j
		\Bigr).
	\end{align*}
	For $F(\rho)=\rho+c\log\rho$, one has $F'(\rho)=1+c/\rho$ and
	$F''(\rho)=-c/\rho^2$.  Since $\psi=\mathcal O_\infty(r^{-1})$, its contribution
	to the metric is $\mathcal O_\infty(r^{-3})$.  Hence
	\begin{align}\label{eq:Kahler-ALE-real-expansion}
		g_{ij}
		=
		\delta_{ij}
		+
		c
		\left(
		\frac{\delta_{ij}}{|u|^2}
		-
		\frac{u_i u_j}{|u|^4}
		-
		\frac{(J_\infty u)_i(J_\infty u)_j}{|u|^4}
		\right)
		+
		\mathcal O_\infty(|u|^{-3}).
	\end{align}
	
	The coefficient $c$ is determined by the ADM mass.  Put $h_{ij}=g_{ij}-\delta_{ij}$.
	From \eqref{eq:Kahler-ALE-real-expansion}, one obtains
	\begin{align*}
		\partial_jh_{ij}-\partial_i h_{jj}
		=
		2c\frac{u_i}{|u|^4}
		+
		\mathcal O_\infty(|u|^{-4}).
	\end{align*}
	Contracting with the Euclidean unit normal on $S_r$ in the cover and using the
	ADM convention \eqref{eq:mass-def-prep}, we get
	\begin{align*}
		c
		=
		3|\Gamma|\,m_{\ADM}(g).
	\end{align*}
	
	The holomorphic ALE coordinates are not yet the preferred coordinates of
	Theorem~\ref{thm:main1}.  At the level of the homogeneous $|u|^{-2}$ coefficient, the required coordinate transformation is
	\begin{align*}
		u_i
		=
		x_i
		-
		\frac{c}{2}\frac{x_i}{|x|^2}.
	\end{align*}
	Equivalently, this is $u_i=x_i+B_{ij}x_j/|x|^2$ with $B=-\frac{c}{2}I$. Using
	\eqref{eq:Psi4-def}, or directly differentiating the coordinate change, the
	metric becomes
	\begin{align}\label{eq:Kahler-ALE-sharp-expansion}
		g^\sharp_{ab}
		=
		\delta_{ab}
		+
		c
		\left(
		\frac{x_a x_b}{|x|^4}
		-
		\frac{(J_\infty x)_a(J_\infty x)_b}{|x|^4}
		\right)
		+
		\mathcal O_\infty(|x|^{-3}).
	\end{align}
	It remains to pass from the intermediate coordinates in
	\eqref{eq:Kahler-ALE-sharp-expansion} to the preferred coordinates of
	Theorem~\ref{thm:main1}.  This last coordinate change only affects lower-order
	terms.  Therefore the homogeneous $|x|^{-2}$ coefficient in
	\eqref{eq:Kahler-ALE-sharp-expansion} is unchanged.  The only remaining freedom is
	the orthogonal ambiguity described in Theorem~\ref{thm:main2}, so we keep the
	coordinate label in the final formula.
	
	Rewrite \eqref{eq:Kahler-ALE-sharp-expansion} as
	\begin{align}\label{eq:preferred-expnasion-Kahler-ALE}
		g^\sharp_{ij}
		&=
		\delta_{ij}
		+
		(A^K)^\sharp_{ijk\ell}
		\frac{x_k x_\ell}{|x|^4}
		+
		\mathcal O_\infty(|x|^{-3}),
	\end{align}
	here
	\begin{align*}
		(A^K)^\sharp_{ijk\ell}
		&=
		\frac{c}{2}
		\bigl(
		\delta_{ik}\delta_{j\ell}
		+
		\delta_{i\ell}\delta_{jk}
		\bigr)
		-
		\frac{c}{2}
		\bigl(
		(J_\infty)_{ik}(J_\infty)_{j\ell}
		+
		(J_\infty)_{i\ell}(J_\infty)_{jk}
		\bigr).
	\end{align*}
	The scalar part in the preferred expansion \eqref{eq:preferred-expnasion-Kahler-ALE} is
	$\Xi(9|\Gamma|m_{\ADM}(g))=\Xi(3c)$ according to \eqref{eq:intro-main-expansion}.  Subtracting this scalar part from
	$(A^K)^\sharp$ gives
	\begin{align*}
		(A^K)^\sharp-\Xi(3c)
		=
		c\,Q_{J_\infty},
	\end{align*}
	where
	\begin{align*}
		(Q_{J_\infty})_{ijk\ell}
		=
		&
		\frac13\delta_{ij}\delta_{k\ell}
		-
		\frac16
		\bigl(
		\delta_{ik}\delta_{j\ell}
		+
		\delta_{i\ell}\delta_{jk}
		\bigr)
		\\
		&-
		\frac12
		\bigl(
		(J_\infty)_{ik}(J_\infty)_{j\ell}
		+
		(J_\infty)_{i\ell}(J_\infty)_{jk}
		\bigr).
	\end{align*}
	Using the algebraic characterization in Proposition~\ref{prop:Wtilde-eq}, one
	checks that $Q_{J_\infty}$ is the image under $s$ of a unique algebraic Weyl
	tensor.  We denote this tensor by $\mathcal W_{J_\infty}$, so that
	\begin{align*}
		s(\mathcal W_{J_\infty})
		=
		Q_{J_\infty}.
	\end{align*}
	Equivalently, it is given by
	\begin{align}\label{eq:Kahler-ALE-WJ-explicit}
		\begin{split}
			(\mathcal W_{J_\infty})_{ijk\ell}
			=
			&
			\frac13
			\bigl(
			\delta_{i\ell}\delta_{jk}
			-
			\delta_{ik}\delta_{j\ell}
			\bigr)
			+
			\frac23
			(J_\infty)_{ij}(J_\infty)_{k\ell}
			\\
			&+
			\frac13
			(J_\infty)_{ik}(J_\infty)_{j\ell}
			-
			\frac13
			(J_\infty)_{i\ell}(J_\infty)_{jk}.
		\end{split}
	\end{align}
	This tensor is nonzero.  For example, in a unitary orthonormal basis one has
	$(Q_{J_\infty})_{1212}=1/3$.
	
	Thus the homogeneous coefficient in the preferred expansion \eqref{eq:preferred-expnasion-Kahler-ALE} satisfies
	\begin{align*}
		(A^K)^\sharp
		&=
		\Xi(3c)
		+
		s(c\mathcal W_{J_\infty})
		\\
		&=\Xi\bigl(9|\Gamma|m_{\ADM}(g)\bigr)
		+
		s\bigl(3|\Gamma|m_{\ADM}(g)\mathcal W_{J_\infty}\bigr).
	\end{align*}
	Comparing with the preferred expansion in \eqref{eq:intro-main-expansion}, and using
	the injectivity of $s$ on algebraic Weyl tensors, gives the leading Weyl term in
	the preferred coordinates:
	\begin{align}\label{eq:Weyl-and-complex-structure}
		\Weyl_\infty(g)=3|\Gamma|m_{\ADM}(g)\mathcal W_{J_\infty}.
	\end{align}
	
	The coordinate covariance of formula \eqref{eq:Weyl-and-complex-structure} is also a necessary consideration. If $A\in\mathrm O(4)$, we regard $J$ as a two-form using the Euclidean metric
	and write
	\begin{align*}
		(A\cdot J)_{ij}
		=
		A_{ai}A_{bj}J_{ab}
	\end{align*}
	for the induced orthogonal action. Recall for an algebraic Weyl tensor $W$, we write
	\begin{align*}
		(A\cdot W)_{ijk\ell}
		=
		A_{ai}A_{bj}A_{ck}A_{d\ell}W_{abcd}.
	\end{align*}
	Then the tensor in \eqref{eq:Kahler-ALE-WJ-explicit} is equivariant under this action:
	\begin{align}\label{eq:Kahler-WJ-O4-equivariance}
		\mathcal W_{A\cdot J}
		=
		A\cdot\mathcal W_J .
	\end{align}
	
	The preceding computation identifies the leading Weyl part in a preferred
	coordinate system obtained from the holomorphic ALE end.  Since
	$\mathcal W_J$ is equivariant under the orthogonal action
	\eqref{eq:Kahler-WJ-O4-equivariance}, this identification is preserved when one
	changes between preferred coordinate systems.  We record the result in the
	following proposition.
	
	\begin{proposition}\label{prop:Kahler-ALE-Weyl-infinity}
		Let $\pi:X\longrightarrow \mathbb C^2/\Gamma$ be the minimal resolution fixed
		above, and let $g$ be a scalar-flat K\"ahler ALE metric on $(X,J)$.  Let $x$
		be a preferred coordinate system obtained by applying Theorem~\ref{thm:main1}
		starting from the holomorphic ALE end induced by $\pi$.  Denote by
		$J_\infty^{(x)}$ the limiting Euclidean complex structure in the $x$-coordinates,
		and denote by $\Weyl_\infty^{(x)}(g)$ the Weyl tensor at infinity in the same
		coordinates.  Then
		\begin{align}\label{eq:Kahler-ALE-Weyl-infinity-formula}
			\Weyl_\infty^{(x)}(g)
			=
			3|\Gamma|\,m_{\ADM}(g)\,\mathcal W_{J_\infty^{(x)}} .
		\end{align}
		
		Moreover, if $x$ and $y$ are two such preferred coordinate systems and $A$ is
		the orthogonal linear part of the transition map at infinity in view of Theorem~\ref{thm:main-rigid-bootstrapped}, then
		\begin{align*}
			J_\infty^{(x)}
			=
			A\cdot J_\infty^{(y)},
			\qquad
			\Weyl_\infty^{(x)}(g)
			=
			A\cdot \Weyl_\infty^{(y)}(g).
		\end{align*}
		Consequently, the formula \eqref{eq:Kahler-ALE-Weyl-infinity-formula} is
		preserved under every change between such preferred coordinate systems.
	\end{proposition}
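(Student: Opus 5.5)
The proof splits into two parts: the formula \eqref{eq:Kahler-ALE-Weyl-infinity-formula} in one preferred chart, and its compatibility with changes of preferred chart.

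\textbf{The formula in one chart.} Start from the holomorphic ALE coordinates $u$ and the expansion \eqref{eq:Kahler-ALE-real-expansion}. The logarithmic coefficient is $c=3|\Gamma|m_{\ADM}(g)$, as computed from the mass integrand.

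First I check that $u$ may serve as the starting chart for the construction of Theorem~\ref{thm:main1}. The $r^{-2}$ coefficient of $g$ in $u$ is $A^K=c(\delta_{ij}\delta_{k\ell}-\tfrac12(\delta_{ik}\delta_{j\ell}+\delta_{i\ell}\delta_{jk})-\tfrac12(J_{ik}J_{j\ell}+J_{i\ell}J_{jk}))$, which is $c$ times the Burns coefficient. A direct computation shows that $h^{A^K}$ satisfies $\BE(h^{A^K})=0$ at order $r^{-3}$. Hence the Bianchi gauge correction of Proposition~\ref{prop:solve-bianchi} can be taken with $u\in\mathcal X_{\alpha+1,1+\sigma}$, which changes only terms of order $r^{-2-\sigma}$ and leaves the coefficient $A^K$ intact.

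Next, $U=2c\,\delta$ and $V=-c\,\delta$, so formula \eqref{eq:solution-B} gives $B=-\tfrac{c}{2}I$. This is exactly the transformation used to reach \eqref{eq:Kahler-ALE-sharp-expansion}, and it produces $(A^K)^\sharp$. Proposition~\ref{prop:Schw-anchored-repair} then imposes the relative gauge without changing the $r^{-2}$ coefficient.

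Subtracting $\Xi(3c)=\Xi(9|\Gamma|m_{\ADM})$ leaves $cQ_{J_\infty}$. I would verify the three conditions of Proposition~\ref{prop:Wtilde-eq} for $Q_J$ by contractions using $J^2=-I$ and $J^T=-J$; the Bianchi identity is the same check as in the Burns example. This yields the unique $\mathcal W_{J}$ with $s(\mathcal W_J)=Q_J$. I would confirm \eqref{eq:Kahler-ALE-WJ-explicit} through $\mathcal W_J=-\tfrac13\mathbf R Q_J$, a routine expansion. Injectivity of $s$ on $\mathcal W$ then gives $\Weyl_\infty^{(x)}=c\,\mathcal W_{J_\infty^{(x)}}$ in this particular chart, where $J_\infty^{(x)}$ is the limit of $J$ expressed in $x$. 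Since the chart changes used in the construction are $\Id+\mathcal O(r^{-1})$, this limit equals $J_\infty$.

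\textbf{Change of preferred chart.} Let $x,y$ be two preferred charts. By Theorem~\ref{thm:main-rigid-bootstrapped}, $F=y\circ x^{-1}=Ax+b+\mathcal O_\infty(r^{-2}\log r)$ with $A\in\mathrm O(4)$. The complex structure $J$ is a global tensor on the end, so its coordinate expressions satisfy $J^{(x)}(x)=DF^{-1}\,J^{(y)}(F(x))\,DF$. Since $DF\to A$, letting $|x|\to\infty$ gives $J_\infty^{(x)}=A^{-1}J_\infty^{(y)}A$. Viewed as a 2-form with indices lowered by $\delta$, this reads $(J_\infty^{(x)})_{ij}=A_{ai}A_{bj}(J_\infty^{(y)})_{ab}$, i.e. $J^{(x)}_\infty=A\cdot J^{(y)}_\infty$ in the paper's convention. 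The transformation $\Weyl_\infty^{(x)}=A\cdot\Weyl_\infty^{(y)}$ is the content of the proof of Theorem~\ref{thm:main2} with this same $A$.

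The equivariance \eqref{eq:Kahler-WJ-O4-equivariance} is immediate, since \eqref{eq:Kahler-ALE-WJ-explicit} is built polynomially from $\delta$ and $J$, and $A\cdot\delta=\delta$ for orthogonal $A$. Combining these, if the formula holds in $y$ then
\[
\Weyl_\infty^{(x)}=A\cdot\bigl(c\,\mathcal W_{J^{(y)}_\infty}\bigr)=c\,\mathcal W_{A\cdot J^{(y)}_\infty}=c\,\mathcal W_{J^{(x)}_\infty}.
\]
Hence the formula holds in every preferred chart obtained from the holomorphic end.

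The main obstacle is the first part: justifying that the Bianchi gauge step and the relative-gauge step do not alter the $r^{-2}$ coefficient. This needs the leading term to already satisfy the linearized gauge equations, so that each correction vector field lies in $\mathcal X_{\cdot,1+\sigma}$. If the Bianchi step instead contributed an $\mathcal O(r^{-1})$ correction, I would fall back on Theorem~\ref{thm:main2}. Both the constructed chart and the chart obtained from the explicit transformation $B=-\tfrac c2 I$ followed by the relative repair are preferred charts, so their coefficients differ only by some $A\in\mathrm O(4)$. The chart-change argument above then transfers the formula between them.
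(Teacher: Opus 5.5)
Your proposal is correct and follows the paper's proof. In the holomorphic chart you apply $B=-\tfrac{c}{2}I$ with $c=3|\Gamma|m_{\ADM}(g)$ and subtract $\Xi(3c)$ to obtain $c\,Q_{J_\infty}=s(c\,\mathcal W_{J_\infty})$; you then transfer between preferred charts using Theorem~\ref{thm:main2}, the limit $J_\infty^{(x)}=A\cdot J_\infty^{(y)}$, and the equivariance \eqref{eq:Kahler-WJ-O4-equivariance}. Your additional check that $\BE(h^{A^K})=0$, so that the Bianchi-gauge correction lies in $\mathcal X_{\alpha+1,1+\sigma}$ and leaves the $r^{-2}$ coefficient unchanged, is accurate and fills in a step the paper only asserts.
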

	
	\begin{proof}
		The formula in one preferred coordinate system follows from the computation
		above.  It remains only to check the behavior under a change of preferred
		coordinates.  By Theorem~\ref{thm:main2}, for the asymptotic expansion of $F = y \circ x^{-1}$, with leading coefficient $A \in \mathrm{O}(4)$, we have
		\begin{align*}
			\Weyl_\infty^{(x)}(g)
			=
			A\cdot \Weyl_\infty^{(y)}(g).
		\end{align*}
		The same transition map $F = y \circ x^{-1}$ sends the limiting complex structure in the
		$y$-coordinates to the limiting complex structure in the $x$-coordinates, so
		$J_\infty^{(x)}=A\cdot J_\infty^{(y)}$. Using
		\eqref{eq:Kahler-WJ-O4-equivariance}, we get
		\begin{align*}
			A\cdot
			\mathcal W_{J_\infty^{(y)}}
			=
			\mathcal W_{A\cdot J_\infty^{(y)}}
			=
			\mathcal W_{J_\infty^{(x)}}.
		\end{align*}
		Therefore, if the formula holds in the $y$-coordinates, then
		\begin{align*}
			\Weyl_\infty^{(x)}(g)
			&=
			A\cdot \Weyl_\infty^{(y)}(g)
			\\
			&=
			A\cdot
			\left(
			3|\Gamma|\,m_{\ADM}(g)\,\mathcal W_{J_\infty^{(y)}}
			\right)
			\\
			&=
			3|\Gamma|\,m_{\ADM}(g)\,\mathcal W_{J_\infty^{(x)}}.
		\end{align*}
		The converse follows by interchanging $x$ and $y$.
	\end{proof}
	
	\begin{remark}
		Formula \eqref{eq:Kahler-ALE-Weyl-infinity-formula} shows that, in the
		scalar-flat K\"ahler ALE setting, the Weyl tensor at infinity is not an
		arbitrary Weyl-type boundary datum.  The K\"ahler structure singles out a
		distinguished algebraic Weyl tensor
		$\mathcal W_{J_\infty^{(x)}}$, and the whole Weyl contribution is a scalar
		multiple of this tensor.  The scalar multiple is exactly
		$3|\Gamma|m_{\ADM}(g)$. Thus the ADM mass determines both the scalar part of the preferred expansion
		and the size of the Weyl-type part, while the limiting complex structure
		determines the tensor direction.
	\end{remark}

	We now replace the mass coefficient in
	\eqref{eq:Kahler-ALE-Weyl-infinity-formula} by a complex-geometric quantity.  For
	a scalar-flat K\"ahler ALE surface, the Hein--LeBrun mass formula gives
	\begin{align}\label{eq:HL-mass-formula-dim2}
		m_{\ADM}(g)
		=
		-\frac{1}{3\pi}
		\left\langle
		\clubsuit(c_1(X,J)),[\omega]
		\right\rangle .
	\end{align}
	Here $\clubsuit$ is the inverse of the natural map from compactly supported
	cohomology to ordinary cohomology \cite{HL}.  In the present ALE setting, this
	map is an isomorphism over $\R$, and the bracket is the Poincar\'e duality
	pairing
	\begin{align*}
		H_c^2(X;\R)\times H^2(X;\R)
		\longrightarrow
		\R .
	\end{align*}
	Combining \eqref{eq:Kahler-ALE-Weyl-infinity-formula} and
	\eqref{eq:HL-mass-formula-dim2} gives the form used below.
	
	\begin{corollary}\label{cor:Kahler-ALE-Weyl-HL}
		In every preferred coordinate system $x$ obtained from the holomorphic end of
		the resolution, one has
		\begin{align*}
			\Weyl_\infty^{(x)}(g)
			=
			-\frac{|\Gamma|}{\pi}
			\left\langle
			\clubsuit(c_1(X,J)),[\omega]
			\right\rangle
			\mathcal W_{J_\infty^{(x)}}.
		\end{align*}
	\end{corollary}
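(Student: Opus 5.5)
The corollary is a substitution argument. I would start from Proposition~\ref{prop:Kahler-ALE-Weyl-infinity}, which states that in every preferred coordinate system $x$ obtained from the holomorphic end of the resolution,
\begin{align*}
	\Weyl_\infty^{(x)}(g)=3|\Gamma|\,m_{\ADM}(g)\,\mathcal W_{J_\infty^{(x)}} .
\end{align*}
Into this I would insert the Hein--LeBrun formula \eqref{eq:HL-mass-formula-dim2}. Multiplying through gives
\begin{align*}
	3|\Gamma|\,m_{\ADM}(g)=3|\Gamma|\cdot\Bigl(-\frac{1}{3\pi}\Bigr)\left\langle\clubsuit(c_1(X,J)),[\omega]\right\rangle=-\frac{|\Gamma|}{\pi}\left\langle\clubsuit(c_1(X,J)),[\omega]\right\rangle,
\end{align*}
which is exactly the claimed identity.

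Because the proposition already covers every preferred coordinate system, including the $\mathrm O(4)$-equivariance, the conclusion holds in every such $x$ at once. No further work on the coordinate dependence is needed, since the scalar factor is independent of coordinates.

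The only genuine obstacle is checking that the normalizations agree. The mass $m_{\ADM}$ in \eqref{eq:mass-def-prep} carries the factor $1/(6\omega_3|\Gamma|)$ and is computed on the $\Gamma$-quotient, and this must be the same quantity that appears in Hein--LeBrun's formula, with the same sign convention for $\clubsuit$ and the same Kähler-form convention $\omega=g(J\cdot,\cdot)$.

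I would test this consistency on the Burns metric, where $\Gamma=\{1\}$ and $m_{\ADM}=1/3$. In the convention where the flat potential $\rho=|z|^2$ gives $\gE$, the exceptional curve $E$ has area $\int_E\omega=\pi$, and $\clubsuit(c_1)$ is Poincaré dual to $-E$, since $K=E$ on $\mathrm{Bl}_0\mathbb C^2$. Hence $\langle\clubsuit(c_1),[\omega]\rangle=-\pi$, and \eqref{eq:HL-mass-formula-dim2} gives $m_{\ADM}=\frac{1}{3\pi}\cdot\pi=\frac13$. This matches \eqref{eq:Burns-mass}.

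A second check comes from the logarithmic coefficient. The relation $c=3|\Gamma|m_{\ADM}$ derived above agrees with the value $c=1$ for the Burns potential $s+\log s$. So the normalization inherited by the corollary is consistent, and the proof reduces to the one-line substitution.
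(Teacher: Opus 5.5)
Your proposal is correct and is exactly the paper's argument: substitute the Hein--LeBrun formula \eqref{eq:HL-mass-formula-dim2} into \eqref{eq:Kahler-ALE-Weyl-infinity-formula} from Proposition~\ref{prop:Kahler-ALE-Weyl-infinity}, use $3|\Gamma|\cdot(-\tfrac{1}{3\pi})=-\tfrac{|\Gamma|}{\pi}$, and inherit coordinate-independence from that proposition. Your Burns-metric normalization check is not in the paper but is consistent with its conventions: $\int_E\omega=\pi$ gives $\langle\clubsuit(c_1),[\omega]\rangle=-\pi$, hence $m_{\ADM}=1/3$, which matches $c=1$.
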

	
	Thus, after the holomorphic end of the minimal resolution is fixed, the
	vanishing of $\Weyl_\infty^{(x)}(g)$ is equivalent to the vanishing of the
	Chern--K\"ahler pairing $\left\langle
	\clubsuit(c_1(X,J)),[\omega]
	\right\rangle$. The vanishing statement is independent of the preferred coordinate system, since
	two representatives of $\Weyl_\infty$ differ by an orthogonal action.  We now
	express this pairing in terms of the exceptional curves of the minimal resolution.
	
	\subsection{Minimal quotient resolutions and crepancy}
	\label{subsec:minimal-quotient-resolution}
	
	We keep the notation fixed above. Thus $\pi:X\to Y=\mathbb C^2/\Gamma$ is the minimal resolution, and $g$ is a scalar-flat K\"ahler ALE metric on
	$(X,J)$ with K\"ahler form $\omega$.  The resolution map identifies the end of
	$X$ with $Y\setminus\{0\}$, so the holomorphic end used in
	Subsection~\ref{subsec:Kahler-ALE-coefficient} is the one determined by $\pi$. In this subsection, we will proof Theorem~\ref{thm:main3}.
	
	Write the exceptional divisor as $E=\bigcup_{i=1}^k E_i$, and let
	$Q=(E_i\cdot E_j)$ be its intersection matrix.  If $\Gamma$ is trivial, then
	$E=\emptyset$ and the sums below are understood to be zero.  Otherwise, for a
	quotient surface singularity, the curves $E_i$ are rational curves, $E$ is
	connected, and $Q$ is negative definite \cite{BHPV}.  Since the resolution is
	minimal, $E$ contains no $(-1)$-curves.  Thus, writing $E_i^2=-e_i$, one has
	$e_i\geq2$ for every $i$.
	
	Set $H=-Q$.  Then $H$ is positive definite, with positive diagonal entries and
	nonpositive off-diagonal entries.  The connectedness of $E$ means that $H$ is
	irreducible.  Hence $H$ is an irreducible symmetric nonsingular $M$-matrix, and
	its inverse is strictly positive \cite{BermanPlemmons}:
	\begin{align}\label{eq:minimal-H-inverse-positive}
		(H^{-1})_{ij}>0
		\qquad
		\text{for all } i,j .
	\end{align}
	
	We next compute the relative canonical divisor.  Since
	$Y=\mathbb C^2/\Gamma$ is a quotient surface singularity, it is
	$\mathbb Q$-Gorenstein.  Indeed, a suitable positive tensor power of the standard
	holomorphic two-form on $\mathbb C^2$ is $\Gamma$-invariant and descends to the
	smooth part of $Y$.  Thus $K_Y$ is $\mathbb Q$-Cartier, and $\pi^*K_Y$ is defined
	as a $\mathbb Q$-divisor.  The difference $K_X-\pi^*K_Y$ is supported on the
	exceptional divisor, so we may write
	\begin{align}\label{eq:canonical-difference}
		K_X-\pi^*K_Y=\sum_{i=1}^k a_iE_i.
	\end{align}
	For each exceptional curve $E_j$, one has $\pi^*K_Y\cdot E_j=0$, because $E_j$ is
	contracted by $\pi$.  On the other hand, by adjunction and by $E_j^2=-e_j$,
	\begin{align*}
		K_X\cdot E_j
		=
		-2-E_j^2
		=
		e_j-2 .
	\end{align*}
	It follows that
	\begin{align*}
		\sum_i a_i(E_i\cdot E_j)=e_j-2
		\qquad
		\text{for every } j.
	\end{align*}
	Define $\mu=(\mu_1,\dots,\mu_k)^T$ by $H\mu=(e_1-2,\dots,e_k-2)^T$, since $H=-Q$, equation \eqref{eq:canonical-difference} becomes
	\begin{align}\label{eq:minimal-canonical-mu}
		K_X-\pi^*K_Y
		=
		-\sum_{i=1}^k\mu_iE_i .
	\end{align}
	
	Define the areas of the exceptional curves by
	\begin{align*}
		A_i
		:=
		\int_{E_i}\omega,
		\qquad
		i=1,\dots,k .
	\end{align*}
	Since $[\omega]$ is a K\"ahler class and each $E_i$ is an effective curve, one has
	$A_i>0$.
	
	We now compute the Chern--K\"ahler pairing appearing in Corollary~\ref{cor:Kahler-ALE-Weyl-HL}.
	
	\begin{lemma}\label{lem:Chern-Kahler-exceptional-pairing}
		For the scalar-flat K\"ahler ALE metric $g$ on $X$, one has
		\begin{align*}
			\left\langle
			\clubsuit(c_1(X,J)),[\omega]
			\right\rangle
			=
			\sum_{i=1}^k\mu_iA_i.
		\end{align*}
	\end{lemma}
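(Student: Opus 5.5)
The plan is to compute $\clubsuit(c_1(X,J))$ explicitly as a real combination of the Poincar\'e duals of the exceptional curves. Once that is done, the pairing with $[\omega]$ reduces to integrating $\omega$ over the $E_i$. Throughout we work with real coefficients.

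\textbf{Step 1 (topology).} Since $X$ retracts onto $E$, we have $H_2(X;\R)\cong H_2(E;\R)$, and this space is spanned by the classes $[E_i]$. Poincar\'e duality gives $H^2_c(X;\R)\cong H_2(X;\R)$, so $H^2_c(X;\R)$ has basis $\mathrm{PD}[E_1],\dots,\mathrm{PD}[E_k]$. We also have $H^2(X;\R)\cong \Hom(H_2(X;\R),\R)$, with coordinates given by pairing against the $[E_j]$. In these bases the natural map $H^2_c\to H^2$ is the intersection matrix $Q$. This map is an isomorphism because $Q$ is negative definite; this is the invertibility used to define $\clubsuit$, and it agrees with $H^1(\Sph^3/\Gamma;\R)=H^2(\Sph^3/\Gamma;\R)=0$. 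Consequently, for any class $\alpha\in H^2(X;\R)$ the preimage $\clubsuit(\alpha)=\sum_i b_i\,\mathrm{PD}[E_i]$ is uniquely determined by the linear system
\begin{align*}
\sum_i b_i\,(E_i\cdot E_j)=\langle \alpha,[E_j]\rangle\qquad\text{for all } j .
\end{align*}

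\textbf{Step 2 (solving for $c_1$).} Take $\alpha=c_1(X,J)=-c_1(K_X)$. By the adjunction computation already carried out above, $\langle c_1(X,J),[E_j]\rangle=-K_X\cdot E_j=2-e_j$. The system therefore reads $Qb=(2-e_1,\dots,2-e_k)^T$, which is equivalent to $Hb=(e_1-2,\dots,e_k-2)^T$. Hence $b=\mu$ by the definition of $\mu$. This agrees with \eqref{eq:minimal-canonical-mu}: since $\pi^*K_Y$ pairs to zero with every $E_j$, its contribution vanishes in real cohomology near $E$, and so $c_1(X,J)$ is represented by $\sum_i\mu_iE_i$.

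\textbf{Step 3 (pairing).} The pairing of $\mathrm{PD}[E_i]\in H^2_c$ with $[\omega]\in H^2$ is $\int_{E_i}\omega=A_i$. By linearity,
\begin{align*}
\left\langle\clubsuit(c_1(X,J)),[\omega]\right\rangle=\sum_i\mu_iA_i .
\end{align*}

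The main obstacle I expect is the bookkeeping of conventions rather than any real difficulty. Two points need care. First, one must check that the Hein--LeBrun bracket is exactly the Poincar\'e duality pairing $H^2_c\times H^2\to\R$, so that $\langle \mathrm{PD}[E_i],\cdot\rangle$ is integration over $E_i$. Second, one must keep the sign conventions consistent between $c_1=-K_X$, $H=-Q$, and $\mu$, since a single sign error would flip the sign of the whole pairing. I would verify both in the simplest case, the Burns metric on $\mathrm{Bl}_0\mathbb C^2$. There $k=1$, $e_1=1$ and $\mu_1=-1$, which should be consistent with the positive mass $1/3$ through \eqref{eq:HL-mass-formula-dim2}. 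Note that $\mathrm{Bl}_0\mathbb C^2$ is not a minimal resolution, but the computation above never uses minimality, so it serves as a valid sign check.
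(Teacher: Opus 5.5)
Your proof is correct and is essentially the paper's proof. The paper quotes the intersection-matrix form of the Hein--LeBrun pairing (their Theorem~5.3), takes $\Sigma_i=E_i$ and solves $Q\lambda=(2-e_1,\dots,2-e_k)^T$ to get $\lambda=\mu$, which is exactly your Steps~2--3; the only difference is that your Step~1 derives that formula directly from Poincar\'e duality, by identifying the map $H^2_c\to H^2$ with $Q$, instead of citing it.
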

	
	\begin{proof}
		The classes of the exceptional curves form a basis of $H_2(X;\R)$.  We use the
		intersection-matrix form of the Hein--LeBrun mass formula
		\cite[Theorem~5.3]{HL}.  In their notation, if
		$\Sigma_1,\dots,\Sigma_b$ is a basis of $H_2(M;\R)$, if
		$Q^\Sigma=(\Sigma_\alpha\cdot\Sigma_\beta)$, and if
		\begin{align*}
			(\lambda_1,\dots,\lambda_b)^T
			=
			(Q^\Sigma)^{-1}
			\left(
			\int_{\Sigma_1}c_1,\dots,
			\int_{\Sigma_b}c_1
			\right)^T,
		\end{align*}
		then the Chern--K\"ahler pairing in \eqref{eq:HL-mass-formula-dim2} is
		\begin{align*}
			\left\langle
			\clubsuit(c_1(X,J)),[\omega]
			\right\rangle
			=
			\sum_{\alpha=1}^b\lambda_\alpha\int_{\Sigma_\alpha}\omega.
		\end{align*}
		
		In the present case, take $\Sigma_i=E_i$.  Then $Q^\Sigma=Q$.  By adjunction,
		\begin{align*}
			\int_{E_j}c_1(X,J)
			=
			-K_X\cdot E_j
			=
			2-e_j.
		\end{align*}
		Therefore the Hein--LeBrun coefficient vector is
		\begin{align*}
			Q^{-1}
			(2-e_1,\dots,2-e_k)^T
			&=
			H^{-1}
			(e_1-2,\dots,e_k-2)^T
			=
			\mu.
		\end{align*}
		Thus
		\begin{align*}
			\left\langle
			\clubsuit(c_1(X,J)),[\omega]
			\right\rangle
			=
			\sum_{i=1}^k\mu_i\int_{E_i}\omega
			=
			\sum_{i=1}^k\mu_iA_i.
		\end{align*}
	\end{proof}
	
	Combining Lemma~\ref{lem:Chern-Kahler-exceptional-pairing} with
	Corollary~\ref{cor:Kahler-ALE-Weyl-HL} gives the formula which will be used in
	the crepancy criterion.
	
	\begin{proposition}\label{prop:minimal-resolution-Weyl-area}
		In every preferred coordinate system $x$ obtained from the holomorphic end
		induced by the resolution map $\pi:X\to\mathbb C^2/\Gamma$, one has
		\begin{align}\label{eq:minimal-resolution-Weyl-area}
			\Weyl_\infty^{(x)}(g)
			=
			-\frac{|\Gamma|}{\pi}
			\left(
			\sum_{i=1}^k\mu_iA_i
			\right)
			\mathcal W_{J_\infty^{(x)}}.
		\end{align}
	\end{proposition}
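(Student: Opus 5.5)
The formula is obtained by substituting Lemma~\ref{lem:Chern-Kahler-exceptional-pairing} into Corollary~\ref{cor:Kahler-ALE-Weyl-HL}. No new analysis is needed, since all the inputs are already established. I will fix a preferred coordinate system $x$ obtained by applying Theorem~\ref{thm:main1} to the holomorphic ALE end induced by $\pi$. The argument then runs in three steps.

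\textbf{Step 1.} By Proposition~\ref{prop:Kahler-ALE-Weyl-infinity},
\begin{align*}
\Weyl_\infty^{(x)}(g)=3|\Gamma|\,m_{\ADM}(g)\,\mathcal W_{J_\infty^{(x)}}.
\end{align*}
The proposition also shows that this identity is preserved under every change between such preferred coordinate systems. Hence it suffices to verify the claimed formula in one chart, and the statement for every preferred chart follows.

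\textbf{Step 2.} I replace the mass using the Hein--LeBrun formula \eqref{eq:HL-mass-formula-dim2}, exactly as in Corollary~\ref{cor:Kahler-ALE-Weyl-HL}. This gives
\begin{align*}
3|\Gamma|\,m_{\ADM}(g)=-\frac{|\Gamma|}{\pi}\left\langle\clubsuit(c_1(X,J)),[\omega]\right\rangle.
\end{align*}

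\textbf{Step 3.} Lemma~\ref{lem:Chern-Kahler-exceptional-pairing} evaluates the pairing on the exceptional basis $\{E_i\}$ as $\sum_i\mu_iA_i$. Here $\mu=H^{-1}(e_1-2,\dots,e_k-2)^T$ is the discrepancy vector from \eqref{eq:minimal-canonical-mu}. Substituting this into Step 2 yields \eqref{eq:minimal-resolution-Weyl-area}. In the trivial case $\Gamma=\{1\}$ we have $E=\emptyset$, so the sum vanishes. This is consistent because $X=\mathbb C^2$ with a scalar-flat K\"ahler ALE metric then has zero pairing, since $H^2_c$ vanishes.

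The only point needing care is consistency of normalizations. The mass convention \eqref{eq:mass-def-prep}, with the factor $1/|\Gamma|$ and the computation on the $\Gamma$-cover, must match the convention under which \eqref{eq:HL-mass-formula-dim2} is quoted from \cite{HL}. Likewise, the relation $c=3|\Gamma|m_{\ADM}$ derived in Subsection~\ref{subsec:Kahler-ALE-coefficient} must be compatible with both. I would check this on a test case: for the Burns metric, $m_{\ADM}=1/3$, and the pairing over the single $(-1)$-curve gives $\mu=-1$ with area $A=\pi$. With $\Gamma$ trivial this gives $-\frac1{3\pi}\cdot(-\pi)=\frac13$, matching. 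Once this sign and constant check passes, the proposition is a direct chaining of the cited results.
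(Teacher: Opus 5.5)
Your proposal is correct and follows the paper's route: the paper obtains this proposition simply by substituting Lemma~\ref{lem:Chern-Kahler-exceptional-pairing} into Corollary~\ref{cor:Kahler-ALE-Weyl-HL}, and that corollary is itself Proposition~\ref{prop:Kahler-ALE-Weyl-infinity} combined with the Hein--LeBrun formula, which are exactly your Steps 1--3. Your Burns-metric normalization check ($\mu=-1$, $A=\pi$, recovering $m_{\ADM}=1/3$) is an extra consistency test that the paper does not carry out, and it does pass under the paper's conventions.
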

	
	We now use minimality.  Since $e_i\geq2$, the vector
	$(e_1-2,\dots,e_k-2)^T$ is nonnegative.  If this vector is zero, then $\mu$ vanishes.  If it is nonzero, then the
	strict positivity of $H^{-1}$ in \eqref{eq:minimal-H-inverse-positive} gives
	$\mu_i>0$ for every $i$.  Since every $A_i$ is positive, the sum
	$\sum_i\mu_iA_i$ can vanish only when $\mu=0$.
	
	This proves the third main theorem mentioned in Subsection~\ref{sec:main123}.
	
	\begin{proof}[Proof of Theorem~\ref{thm:main3}]
		By Proposition~\ref{prop:minimal-resolution-Weyl-area} and the fact that
		$\mathcal W_{J_\infty^{(x)}}\neq0$, the condition
		$\Weyl_\infty^{(x)}(g)=0$ is equivalent to
		\begin{align*}
			\sum_{i=1}^k\mu_iA_i=0 .
		\end{align*}
		By the positivity argument above, this holds if and only if $\mu=0$.  Finally,
		by \eqref{eq:minimal-canonical-mu}, the condition $\mu=0$ is equivalent to
		\begin{align*}
			K_X-\pi^*K_Y=0,
		\end{align*}
		namely $K_X=\pi^*K_Y$.  Since the formula
		\eqref{eq:minimal-resolution-Weyl-area} holds in every preferred coordinate
		system obtained from the holomorphic end, the vanishing condition is independent
		of the chosen preferred coordinates.
	\end{proof}
	
	We close the section with several representative examples illustrating
	Theorem~\ref{thm:main3}.  We use the negative Hirzebruch--Jung convention
	$\frac{q}{p}=[e_1,\dots,e_k]$, so that the corresponding cyclic resolution
	chain has self-intersections $-e_1,\dots,-e_k$.  The symbol
	$\frac{1}{q}(1,p)$ denotes the cyclic quotient generated by
	$(z_1,z_2)\mapsto(\zeta z_1,\zeta^p z_2)$, where $\zeta^q=1$ and
	$(p,q)=1$.  The symbol $D_4$ denotes the rational double point of type $D_4$,
	equivalently the quotient by the binary dihedral subgroup of order $8$ in
	$\mathrm{SU}(2)$.
	
	As a non-cyclic non-crepant example, we take one of the standard finite
	subgroups of $\mathrm U(2)$ appearing in the classification used in
	\cite{LV19}, namely
	$\Gamma=\phi(L(1,6)\times D_8^*)$.  Here
	$\phi:S^3\times S^3\to\mathrm{SO}(4)$ is the standard two-to-one homomorphism
	induced by the left and right quaternionic actions on
	$\mathbb H\simeq\mathbb R^4$.  The group $L(1,6)$ has order $6$, and
	$D_8^*$ has order $8$; hence $|\Gamma|=24$.  In the corresponding
	Brieskorn--Hirzebruch--Jung resolution graph, the central curve has
	self-intersection $-3$, and each of the three arms consists of a single
	$(-2)$-curve.
	
	For convenience, we write
	$\mathcal W_x=\mathcal W_{J_\infty^{(x)}}$.  The vector $\mu$ below is the
	one defined by $H\mu=(e_i-2)$, equivalently by
	\eqref{eq:minimal-canonical-mu}.  In the last non-cyclic example, we label the
	central curve by $E_0$ and the three end curves by $E_1,E_2,E_3$.  The table
	records the relevant resolution data and the corresponding behavior of the
	leading Weyl term.
	
	\begin{table}[htbp]
		\centering
		\small
		\renewcommand{\arraystretch}{1.35}
		\setlength{\tabcolsep}{5pt}
		\begin{tabularx}{\textwidth}{@{}p{0.20\textwidth}p{0.24\textwidth}p{0.34\textwidth}X@{}}
			\toprule
			Type
			&
			Example
			&
			Resolution data
			&
			Conclusion
			\\
			\midrule
			
			cyclic crepant
			&
			$\frac{1}{q}(1,q-1)$, $q\geq2$
			&
			$\frac{q}{q-1}=[2,\dots,2]$, hence $\mu=0$
			&
			$\Weyl_\infty^{(x)}=0$
			\\
			\addlinespace[0.35em]
			
			cyclic non-crepant
			&
			$\frac{1}{5}(1,2)$
			&
			$\frac{5}{2}=[3,2]$, and
			$\mu=(\frac{2}{5},\frac{1}{5})^T$
			&
			$\Weyl_\infty^{(x)}\neq0$
			\\
			\addlinespace[0.35em]
			
			non-cyclic crepant
			&
			$D_4$
			&
			one central $(-2)$-curve and three end $(-2)$-curves; hence $\mu=0$
			&
			$\Weyl_\infty^{(x)}=0$
			\\
			\addlinespace[0.35em]
			
			non-cyclic non-crepant
			&
			$\phi(L(1,6)\times D_8^*)$
			&
			one central $(-3)$-curve and three $(-2)$ end curves;
			central entry first:
			$\mu=(\frac{2}{3},\frac{1}{3},\frac{1}{3},\frac{1}{3})$
			&
			$\Weyl_\infty^{(x)}\neq0$
			\\
			\bottomrule
		\end{tabularx}
		\vspace{0.4em}
		\caption{Representative examples for Theorem~\ref{thm:main3}.}
		\label{tab:crepancy-Weyl-examples}
	\end{table}
	
	The two crepant examples have vanishing leading Weyl term.  Indeed, for
	$\frac{1}{q}(1,q-1)$ the Hirzebruch--Jung string consists entirely of
	$(-2)$-curves, and the same is true for the $D_4$ rational double point.  Hence
	the vector $(e_i-2)$ vanishes, so $\mu=0$.  Proposition
	\ref{prop:minimal-resolution-Weyl-area} therefore gives
	\begin{align*}
		\Weyl_\infty^{(x)}(g)=0 .
	\end{align*}
	
	The two non-crepant examples show the opposite behavior.  For the cyclic
	quotient $\frac{1}{5}(1,2)$, the listed value
	$\mu=(\frac{2}{5},\frac{1}{5})^T$ and $|\Gamma|=5$ give
	\begin{align*}
		\Weyl_\infty^{(x)}(g)
		=
		-\frac{1}{\pi}(2A_1+A_2)\mathcal W_x .
	\end{align*}
	Since $A_1,A_2>0$ and $\mathcal W_x\neq0$, this leading Weyl term is nonzero.
	
	For the non-cyclic example $\Gamma=\phi(L(1,6)\times D_8^*)$, with the central
	curve labeled by $E_0$, the listed value
	$\mu=(\frac{2}{3},\frac{1}{3},\frac{1}{3},\frac{1}{3})$ and $|\Gamma|=24$ give
	\begin{align*}
		\Weyl_\infty^{(x)}(g)
		=
		-\frac{8}{\pi}
		\left(
		2A_0+A_1+A_2+A_3
		\right)
		\mathcal W_x .
	\end{align*}
	Again this is nonzero because all K\"ahler areas $A_i$ are positive and
	$\mathcal W_x\neq0$.
	
	Thus the examples display both sides of Theorem~\ref{thm:main3}: the ADE,
	crepant cases have no leading Weyl contribution at infinity, whereas the
	non-crepant minimal quotient resolutions carry a non-removable leading Weyl
	term.

\end{document}